\documentclass[11pt,reqno]{amsart}
\usepackage{amsfonts,amssymb,amsmath,amsthm,adjustbox}
\usepackage[margin=1in]{geometry}
\usepackage[hidelinks]{hyperref}
\usepackage{enumerate}
\usepackage{microtype}
\usepackage{mathtools}
\usepackage{cite}
\usepackage{tikz-cd}
\usepackage{float}
\usepackage{cleveref}

\theoremstyle{plain}
\newtheorem{theorem}{Theorem}
\newtheorem{corollary}[theorem]{Corollary}
\newtheorem{lemma}[theorem]{Lemma}
\newtheorem{proposition}[theorem]{Proposition}
\theoremstyle{definition}
\newtheorem{remark}[theorem]{Remark}

\newtheorem{example}[theorem]{Example}

\newtheorem{question}[theorem]{Question}

\def\Q{\mathbb{Q}}
\def\Z{\mathbb{Z}}
\def\F{\mathbb{F}}
\def\R{\mathbb{R}}

\def\Gal{\operatorname{Gal}}
\newcommand{\neghs}{\mkern-12mu}

\DeclareMathOperator{\GL}{GL}
\DeclareMathOperator{\Aut}{Aut}
\DeclareMathOperator{\Tr}{tr }
\DeclareMathOperator{\adj}{adj}
\DeclareMathOperator{\divi}{\!-div}
\DeclareMathOperator{\Frob}{Frob}

\theoremstyle{remark}

\title{Divisibility Biases in the Orders of Elliptic Curve Reductions}
\date{\today}

\subjclass[2010]{Primary 11G05; Secondary 11F80, 11N05.}

\author{Sung Min Lee}
\address{Sung Min Lee, Department of Mathematics, Wake Forest University, Winston-Salem, NC 27109}

\author{Nara Sheen}
\address{Nara Sheen, Department of Mathematics and Statistics, Minnesota State University, Mankato, Mankato, MN 56001}

\begin{document}

\begin{abstract}
Let $E$ be an elliptic curve over the rationals. In 2004, Cojocaru proved, using the Chebotarev density theorem, that the set of primes $p \leq x$ for which $m$ divides $\#E_p(\F_p)$ has a natural density. In 2009, Banks and Shparlinski proved an averaged version of this result over families of elliptic curves. In this article, we give a more explicit analysis of these densities. In particular, we show that, for Serre curves, the density of primes $p$ for which $m \mid \#E_p(\F_p)$ is approximately $1/\varphi(m)$, and is always greater than $1/m$ for every $m \geq 2$. Thus, the orders $\#E_p(\F_p)$ exhibit a bias toward divisibility by $m$. Finally, based on Jones' method, we prove that the average of the individual $m$-divisibility densities coincides with the average density proposed by Banks and Shparlinski.
\end{abstract}

\maketitle

\section{Introduction}
Let $E/\Q$ be an elliptic curve of conductor $N_E$. The curve $E$ admits a short Weierstrass model; that is, there exist $a,b \in \Z$ for which
\begin{equation}\label{shortWeierstrass}
    E : Y^2 = X^3 + aX+b,
\end{equation}
where $\Delta_E \coloneqq -16(4a^3+27b^2) \neq 0$. For a rational prime $p \nmid N_E$, which we call a \textit{good prime}, let $E_p$ denote the reduction of $E$ modulo $p$:
$$E_p : Y^2 \equiv X^3 + aX +b \pmod p.$$
The set of $\F_p$-rational points on $E_p$, denoted by $E_p(\F_p)$, forms a finite abelian group. The Frobenius trace $a_p(E)$ is defined by
\begin{equation}\label{Frobtrace}
    a_p(E)=p+1-\#E_p(\F_p).
\end{equation}
The Hasse--Weil bound states that $|a_p(E)| \leq 2\sqrt{p}$, implying that $p$ and $\#E_p(\F_p)$ are of comparable size.

For a fixed $E/\Q$, the distribution of the group structures of $E_p(\F_p)$ as $p$ varies has been long studied by numerous mathematicians. One of the earliest questions, first studied by Serre \cite[pp. 465–466]{MR3223094}, asks how often $E_p(\F_p)$ is cyclic. This problem is closely related to the elliptic curve analogue of Artin's primitive root conjecture which we discuss in Remark \ref{naturalperspective}. Another important example is the Koblitz--Zywina conjecture \cite{MR917870,MR2805578}, which asks how often $\#E_p(\F_p)$ is prime; this is often viewed as an elliptic curve analogue of the twin prime conjecture and the first Hardy--Littlewood conjecture. This connection is not limited to a single elliptic curve case. For example, given two elliptic curves $E$ and $E'$, one may ask how often $\#E_p(\F_p)$ and $\#E_p'(\F_p)$ are coprime. This problem can be viewed as an elliptic curve analogue of the classical coprimality problem for integers and was recently studied in \cite{hamakiotes2026densitycoprimereductionselliptic}.

In this article, we study how the group order $\#E_p(\F_p)$ is distributed as $p$ varies. More specifically, for a fixed elliptic curve $E/\Q$ and a positive integer $m$, we ask how often $\#E_p(\F_p)$ is divisible by $m$. We call such primes $p$ \textit{$m$-divisibility primes} for $E$. This question was first studied by Cojocaru.
\begin{theorem}[Cojocaru, \protect{\cite[Theorem 1]{MR2076566}}]\label{cojocaru}
    Let $E/\Q$ be an elliptic curve of conductor $N_E$, and let $m$ be a positive integer. Then there exists a constant $C_E^{m\divi} \geq 0$ such that
    $$\pi_E^{m\divi}(x) \coloneqq \#\{p \leq x : p\nmid N_E, m \mid \#E_p(\F_p)\} \sim C^{m\divi}_E \cdot \frac{x}{\log x}, \quad \text{ as } x\to \infty.$$
\end{theorem}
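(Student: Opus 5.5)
The plan is to realize the condition $m \mid \#E_p(\F_p)$ as a Chebotarev condition in the $m$-division field $\Q(E[m])$ and then apply the Chebotarev density theorem.

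Let $\rho_{E,m}\colon \Gal(\Q(E[m])/\Q)\to \GL_2(\Z/m\Z)$ be the mod $m$ Galois representation, and let $G(m)$ denote its image. For a good prime $p\nmid mN_E$, the extension $\Q(E[m])/\Q$ is unramified at $p$ by the N\'eron--Ogg--Shafarevich criterion. Reduction mod $p$ identifies $E[m]$ with $E_p[m]$ compatibly with the Galois action, and the Frobenius $\Frob_p$ acts via the $p$-power map. So $\rho_{E,m}(\Frob_p)$ is defined up to conjugacy in $G(m)$. Its characteristic polynomial is $X^2-a_p(E)X+p \pmod m$, which follows from the Weil pairing ($\det = p$) and the standard $\ell$-adic compatibility ($\Tr = a_p(E)$).

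By \eqref{Frobtrace},
$$\#E_p(\F_p)=p+1-a_p(E)=\det(I-\rho_{E,m}(\Frob_p)) \pmod m.$$
Therefore $m\mid\#E_p(\F_p)$ if and only if $\rho_{E,m}(\Frob_p)$ lies in
$$C(m)\coloneqq\{g\in G(m):\det(I-g)\equiv 0 \pmod m\}.$$
The set $C(m)$ is stable under conjugation, since $\det(I-hgh^{-1})=\det(I-g)$, so it is a union of conjugacy classes. The Chebotarev density theorem applied to the finite Galois extension $\Q(E[m])/\Q$ then gives
$$\#\{p\le x: p\nmid mN_E,\ \Frob_p\in C(m)\}\sim \frac{\#C(m)}{\#G(m)}\cdot\frac{x}{\log x}.$$

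The finitely many primes dividing $m$ but not $N_E$ change the count only by $O(1)$, so they do not affect the asymptotic. We may therefore take $C_E^{m\divi}=\#C(m)/\#G(m)\ge 0$. This constant is positive, because $I\in C(m)$.

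The only step needing care is the identification of the characteristic polynomial of Frobenius modulo $m$ when $m$ is composite. I would handle it by working prime-power by prime-power, using the Tate module $T_\ell(E)$ and the fact that $\det(X-\rho_{E,\ell^\infty}(\Frob_p))=X^2-a_pX+p\in\Z_\ell[X]$. The Chinese remainder theorem then assembles the statement for $m$. Everything else is formal.
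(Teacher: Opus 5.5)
Your proposal is correct and follows the paper's own argument: in Section \ref{mdivisibilityprime} the paper rewrites $m \mid \#E_p(\F_p)$, for $p \nmid mN_E$, as $\rho_{E,m}(\Frob_p) \in G_E(m)\cap\Psi(m)$, observes that this set is stable under conjugation, and applies Chebotarev to obtain \eqref{CmdivE}, which is exactly your $\#C(m)/\#G(m)$ (positivity from the identity matrix is Remark \ref{lowerbound}). Your prime-power-by-prime-power treatment of the characteristic polynomial via $T_\ell(E)$ and the Chinese remainder theorem simply spells out what the paper obtains by citing \eqref{wellknownequation}.
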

The density is given in \eqref{CmdivE}. For a fixed elliptic curve $E/\Q$, the condition $m \mid \#E_p(\F_p)$ can be interpreted as a condition on the Frobenius element $\Frob_p$ in a suitable Galois extension of $\Q$. More precisely, it corresponds to a subset closed under conjugation of the relevant Galois group. Thus, Theorem \ref{cojocaru} is a standard application of the Chebotarev density theorem. We discuss this interpretation in more detail in Section \ref{mdivisibilityprime}.

A naive heuristic suggests that the integers $\#E_p(\F_p)$ should behave like random integers of comparable size. From this point of view, one would expect the density $C^{m\divi}_E$ to be approximately $1/m$, or at least to have the same order of magnitude. Cojocaru also confirmed that the density should be approximately $1/m$ in a generic case.

There is also a ``local'' analogue of this question. Fix a prime power $q$, and let $E$ vary over elliptic curves over $\F_q$. Since the condition $m \mid \#E(\F_q)$ is preserved under $\F_q$-isomorphism, we count each $\F_q$-isomorphism class $[E]$ with weight $1/\#\Aut_{\F_q}(E)$. One may then ask for the weighted probability that $\#E(\F_q)$ is divisible by a fixed positive integer $m$.

Lenstra Jr. \cite[Proposition 1.14]{MR916721}first studied this question in the case where $q>3$ is prime and $m=\ell$ is a prime distinct from $q$. Howe \cite{MR1204781} later generalized this result to arbitrary positive integers $m$ and prime powers $q$. Howe explicitly computed the weighted probability $\omega_q(m)$ with a concrete error bound, and observed that $\omega_q(m)$ is close to $1/\varphi(m)$. This suggests that the $m$-divisibility condition is biased above the naive random-integer prediction $1/m$. We discuss this more carefully in Remark \ref{HoweWork}.

There is an average result in this direction. Instead of fixing a single elliptic curve over $\Q$, one may average over a box of Weierstrass equations. Fix positive real numbers $A$ and $B$, and let $\mathcal{F}\coloneqq\mathcal{F}(A,B)$ denote the family of elliptic curves of the form \eqref{shortWeierstrass} with $|a| \leq A$ and $|b| \leq B$. Based on Howe's results, Banks and Shparlinski obtained the following global statistical result.
\begin{theorem}[Banks--Shparlinski, \protect{\cite[Theorem 19]{MR2570668}}]
    Let $\epsilon > 0$ and $K > 0$. Let $A$ and $B$ satisfy
    $$x^\epsilon \leq A,B \leq x^{1-\epsilon} \quad \text{ and } \quad x^{1+\epsilon} \leq AB.$$
    Then for any $m \leq \log^Kx$, we have
    $$\frac{1}{|\mathcal{F}|}\sum_{E\in \mathcal{F}} \pi_E^{m\divi}(x) \sim C^{m\divi} \cdot \frac{x}{\log x}, \quad \text{ as } x\to \infty,$$
    where $C^{m\divi}$ is defined in \eqref{definitionofaveragedensity}.
\end{theorem}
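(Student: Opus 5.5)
We plan to prove the Banks--Shparlinski average by swapping the order of summation and reducing to Howe's local count at each prime. Writing
$$\frac{1}{|\mathcal{F}|}\sum_{E\in\mathcal{F}}\pi_E^{m\divi}(x)=\sum_{p\le x}\frac{1}{|\mathcal{F}|}\#\{(a,b): |a|\le A,\ |b|\le B,\ p\nmid \Delta_{a,b},\ m\mid \#E_{a,b,p}(\F_p)\},$$
we will, for each prime $p\le x$ with $p\nmid 6m$, count the pairs $(a,b)$ in the box by their residues modulo $p$. Among nonsingular pairs $(\bar a,\bar b)\in\F_p^2$, each $\F_p$-isomorphism class $[E]$ occurs exactly $(p-1)/\#\Aut_{\F_p}(E)$ times. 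Hence the proportion of nonsingular residue pairs with $m\mid\#E(\F_p)$ equals the weighted probability $\omega_p(m)$ of Howe, up to the factor $(p^2-p)/p^2$ accounting for singular pairs. The primes $p\mid 6m$ number $O(\log m)=O(\log\log x)$ and are negligible.

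The main step is to transfer the count from residues mod $p$ to the box. If $A,B\ge p$, this is trivial: each residue class in the box has size $(2A/p+O(1))(2B/p+O(1))$. But $A,B$ may be as small as $x^\epsilon$, much less than $p$, so we instead need equidistribution of $(a,b)$ mod $p$ on average over $p$. We will express the indicator of ``$m\mid\#E(\F_p)$'' on $\F_p^2$ through its Fourier expansion. The nonzero frequencies then produce incomplete character sums over the box, of the shape
$$\sum_{|a|\le A,|b|\le B}\psi(a,b)\mathbf 1_{S_p}(a,b),$$
where $S_p\subset\F_p^2$ is the relevant set. We will bound these sums using the large sieve, or multiplicative-character sums as in Banks--Shparlinski, summed over $p\le x$. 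The hypotheses $AB\ge x^{1+\epsilon}$ and $A,B\le x^{1-\epsilon}$ are exactly what this large-sieve averaging needs. This is the step I expect to be the main obstacle. The dependence on $m$ enters through the complexity of $S_p$: it is defined through the Frobenius mod $m$, hence via the $m$-torsion field, which involves roughly $m^{O(1)}$ conditions. The constraint $m\le\log^K x$ keeps this loss at most a power of $\log x$.

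Granting equidistribution, the main term becomes $\sum_{p\le x}\omega_p(m)(1+O(1/p))$. Howe gives $\omega_p(m)$ explicitly, with error $O(m^{O(1)}p^{-1/2})$, as a function of $p \bmod m$: it is the proportion of matrices in $\GL_2(\Z/m\Z)$ with determinant $p$ and $\det(1-g)\equiv 0$. Summing over $p$ in each residue class mod $m$ via the prime number theorem in arithmetic progressions (Siegel--Walfisz suffices since $m\le\log^Kx$) yields
$$\frac{x}{\log x}\cdot\frac{1}{\varphi(m)}\sum_{d\in(\Z/m\Z)^\times}\omega_{(d)}(m).$$
This average is precisely $C^{m\divi}$ as defined in \eqref{definitionofaveragedensity}. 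The error terms total $o(x/\log x)$, which completes the proof.
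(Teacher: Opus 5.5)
The paper gives no proof of this theorem; it is quoted from Banks--Shparlinski, so the comparison is with their argument.

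\textbf{What is right.} Your outer framework is correct and is essentially theirs:
\begin{itemize}
\item interchange the sums;
\item for $p\nmid 6m$, each $\F_p$-isomorphism class is an orbit of $u\cdot(a,b)=(u^4a,u^6b)$ of size $(p-1)/\#\Aut_{\F_p}(E)$. So the density of good residue pairs is $\frac{p-1}{p}\cdot\#'V(\F_p;m)/p$ (not $\omega_p(m)$ itself, but Howe's error $m^{1+o(1)}p^{-1/2}$ is harmless);
\item averaging $\omega_p(m)$ over $p \bmod \overline{m}$ with Siegel--Walfisz produces $C^{m\divi}$.
\end{itemize}

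\textbf{The gap.} The step you flag as the main obstacle is the entire content of the theorem when $A,B<p$. Deferring it to ``multiplicative-character sums as in Banks--Shparlinski'' is circular, and the route you do sketch cannot work in the stated range.

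If all you use about $S_p\subset\F_p^2$ is its density $\delta_p\asymp 1/\varphi(m)$, nothing can be said once $4AB<\delta_p p^2$: a set of that density can contain the whole reduced box, prime by prime, and averaging over $p$ does not help against such sets. Correspondingly, additive Fourier expansion plus Parseval only bounds the discrepancy by about $p\sqrt{\delta_p AB}$ per prime, since the additive large sieve reduces to Parseval when $A,B<p$. That is roughly $x^2\sqrt{AB}$ in total, useless unless $AB\gg x^2$. The ``$m^{O(1)}$ conditions coming from the $m$-torsion field'' give no structure on $S_p$ that Fourier analysis on $\F_p^2$ can exploit.

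\textbf{The missing idea.} Use that $S_p$ is a union of isomorphism classes, i.e.\ of orbits of the torus action above.
\begin{itemize}
\item On $ab\not\equiv 0$, its indicator is a combination of $\chi_1(a)\chi_2(b)$ with $\chi_1^4\chi_2^6=1$ (a group of order $2(p-1)$). The box count becomes $\sum c_p(\chi_1,\chi_2)S_A(\chi_1)S_B(\chi_2)$ with $S_A(\chi)=\sum_{0<|a|\le A}\chi(a)$, and the pair $(1,1)$ gives the main term.
\item Write the discrepancy as a sum over the classes in $S_p$. Cauchy--Schwarz together with orthogonality of these characters on the $2p+O(1)$ classes bounds it by $\ll(\sum_{(\chi_1,\chi_2)\ne(1,1)}|S_A(\chi_1)|^2|S_B(\chi_2)|^2)^{1/2}$. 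This holds uniformly over all unions of classes, hence with no dependence on $m$ at all.
\item For pairs with both characters nontrivial, apply Cauchy--Schwarz over $p$ and the multiplicative large sieve to the square of $\sum_{1\le a\le A}\chi(a)$ (a sum of length $A^2$ with divisor-bounded coefficients). This gives $\sum_{p\le x}\sum_{\chi\ne 1}|S_A(\chi)|^4\ll (x^2+A^2)A^2\log^3 x$, hence a total error $\ll x^{3/2}(AB)^{1/2}\log x$. That is $o(ABx/\log^{K+1}x)$ precisely because $AB\ge x^{1+\epsilon}$ and $A,B\le x$.
\item Pairs with one trivial character involve characters of order dividing $4$ or $6$. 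They require $\sum_{p\le x}|S_B(\chi_p)|=o(Bx/\log^{K+1}x)$ (and likewise for $A$), which follows from H\"older and the $2r$-th moment large sieve with $B^r\ge x^2$. This is where $A,B\ge x^{\epsilon}$ is used.
\end{itemize}

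Accordingly, $m\le\log^K x$ enters only through Howe's error term, through Siegel--Walfisz modulo $\overline{m}$, and through the need for all errors to be small relative to $C^{m\divi}>1/m$. It does not enter through any complexity of $S_p$, contrary to your proposal.
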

Banks and Shparlinski gave an explicit formula for the average density $C^{m\divi}$. However, they did not simplify this formula, and hence it is not immediately clear whether their result is closer to the random-integer prediction $1/m$ or to Howe's local weighted model $1/\varphi(m)$.

We now describe the main contributions of this article. Our first result is an explicit computation of $C^{m\divi}$, which shows that this average density is close to $1/\varphi(m)$ and thus supports Howe's weighted model. Building on work of Jones \cite{MR2534114}, we  derive an explicit formula for the $m$-divisibility density $C_E^{m\divi}$ for Serre curves $E$.

\begin{theorem}\label{maintheorem}
    Let $E/\Q$ be a Serre curve and let $m \geq 2$ be a positive integer. Let $\Delta'_E$ denote the squarefree part of the discriminant of $E$, and let $m_E$ denote the adelic level of $E$. Let $v_2(\cdot)$ denote the $2$-adic valuation. Suppose one of the following cases holds:
    \begin{itemize}
        \item $m_E \nmid m$.
        \item $m_E \mid m$, $\Delta'_E \equiv 3 \pmod 4$, and $v_2(m) = 2$.
        \item $m_E \mid m, \Delta'_E \equiv 2 \pmod 4$, and $v_2(m) \in \{3,4\}$.
    \end{itemize}
    Then we have
    $$C_E^{m\divi} = C^{m\divi}.$$
    Otherwise, we write $m = m_1m_2$ such that $m_1 = \gcd(m,m_E^\infty)$ and $\gcd(m_2,m_E) = 1$. Then
    $$C_E^{m\divi} = \left(C^{m_1\divi} + \prod_{\ell^\alpha \parallel m_1} \frac{-\ell^{2\alpha-1}}{|\GL_2(\Z/\ell^\alpha\Z)|}\right) \cdot C^{m_2\divi}.$$
\end{theorem}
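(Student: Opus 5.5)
Our plan is to compute $C_E^{m\divi}$ via the Chebotarev density theorem in the $m$-division field $\Q(E[m])$, using Jones' description of the adelic image of a Serre curve, and to compare with $C^{m\divi}$. First, by the interpretation in Section \ref{mdivisibilityprime}, $C_E^{m\divi} = \#\mathcal{C}_m(G_m)/\#G_m$, where $G_m \subseteq \GL_2(\Z/m\Z)$ is the image of the mod $m$ representation and $\mathcal{C}_m(G)=\{g\in G : \det(I-g)\equiv 0 \bmod m\}$. We will also need the identity $C^{m\divi}=\#\mathcal{C}_m(\GL_2(\Z/m\Z))/|\GL_2(\Z/m\Z)|$, which should follow from Howe's computation of $\omega_q(m)$ behind \eqref{definitionofaveragedensity} together with equidistribution of $\Frob_q$ in $\GL_2(\Z/m\Z)$ as $q$ varies. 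This density is multiplicative in $m$ because $\GL_2(\Z/m\Z)=\prod_{\ell^\alpha\parallel m}\GL_2(\Z/\ell^\alpha\Z)$ and the condition $m\mid\det(I-g)$ splits over the prime powers.

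For a Serre curve, Jones shows that the adelic image has index $2$ in $\GL_2(\hat\Z)$. Concretely, it is the kernel of $\chi=\epsilon\cdot\psi$, where $\epsilon$ is the signature character factored through $\GL_2(\Z/2\Z)\cong S_3$ and $\psi$ is the Kronecker character attached to $\Q(\sqrt{\Delta'_E})$ viewed through $\det$ modulo the conductor of $\Q(\sqrt{\Delta_E'})$. Moreover $m_E=\operatorname{lcm}(2,|d|)$, where $d$ is the discriminant of $\Q(\sqrt{\Delta_E'})$. Hence $G_m=\GL_2(\Z/m\Z)$ whenever $m_E\nmid m$, and the first bullet follows at once. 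When $m_E\mid m$, $G_m=\ker\chi$ has index $2$. Since $G_m=\GL_2(\Z/m_1\Z)'\times \GL_2(\Z/m_2\Z)$, with the prime $m_2$ part unaffected, we have
\[
\frac{\#\mathcal{C}_m(G_m)}{\#G_m}=\Bigl(C^{m_1\divi}+\frac{1}{|\GL_2(\Z/m_1\Z)|}\sum_{g\in \mathcal{C}_{m_1}}\chi(g)\Bigr)\cdot C^{m_2\divi}.
\]
The task therefore reduces to evaluating the character sum $S=\sum_{g\in\GL_2(\Z/m_1\Z),\,m_1\mid\det(I-g)}\chi(g)$. Because $\chi=\prod_\ell\chi_\ell$ factors over the primes $\ell\mid m_E$ (with $\chi_2$ combining $\epsilon$ and the $2$-part of $\psi$), $S$ factors as $\prod_{\ell^\alpha\parallel m_1}S_\ell$.

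For odd $\ell$, $\chi_\ell(g)=\left(\frac{\det g}{\ell}\right)$. We will compute $S_\ell=\sum_{g:\,\ell^\alpha\mid\det(I-g)}\left(\frac{\det g}{\ell}\right)$ by parametrizing $g=I-h$ with $\ell^\alpha\mid\det h$ and counting matrices $h$ with prescribed $\det h\equiv0$ and trace class. Since $\det g=1-\operatorname{tr}h+\det h\equiv 1-\operatorname{tr}h \pmod\ell$, the sum becomes a sum of Legendre symbols over the distribution of $\operatorname{tr} h$ among matrices with $\det h\equiv 0 \bmod \ell^\alpha$ and $I-h$ invertible. We expect this to give $-\ell^{2\alpha-1}$: the trace is equidistributed except for a defect coming from $\operatorname{tr}h\equiv 0$ (equivalently $\det g\equiv1$), and that defect contributes with a sign. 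The prime $\ell=2$ requires a separate analysis. There $\chi_2=\epsilon\cdot\psi_2$, where $\psi_2$ is a character of $(\Z/8\Z)^\times$ or $(\Z/4\Z)^\times$ depending on $\Delta'_E \bmod 4$. We will carry out the analogous count over $\GL_2(\Z/2^\alpha\Z)$ by lifting from $\GL_2(\Z/2\Z)\cong S_3$ and tracking $\det g \bmod 8$. We expect $S_2=-2^{2\alpha-1}$ in general, but $S_2=0$ in exactly the exceptional cases: $\Delta'_E\equiv3 \pmod 4$ with $\alpha=2$, and $\Delta'_E\equiv 2\pmod 4$ with $\alpha\in\{3,4\}$. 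A vanishing $S_2$ kills the whole product and yields $C_E^{m\divi}=C^{m\divi}$, which gives the remaining two bullets. The case $\Delta_E'\equiv1\pmod4$ has no $2$-part of $\psi$, so $\chi_2=\epsilon$.

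We expect the main obstacle to be the $2$-adic character sum. Computing $\sum \epsilon(g)\psi_2(\det g)$ over $g$ with $2^\alpha\mid\det(I-g)$ requires careful bookkeeping of how the reduction mod $2$, which determines the sign in $S_3$, interacts with $\det g \bmod 8$ under the divisibility constraint. That interaction produces the vanishing only at specific valuations. We would first do a brute-force-style count for $\alpha\le 5$ and then establish stabilization for larger $\alpha$ by a Hensel/fibre-counting argument: the fibres of $\GL_2(\Z/2^{\alpha+1})\to\GL_2(\Z/2^\alpha)$ over the relevant set scale uniformly by $2^2$ once $\alpha$ is large.
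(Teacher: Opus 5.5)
You have the paper's architecture. Chebotarev gives $C_E^{m\divi}=|G_E(m)\cap\Psi(m)|/|G_E(m)|$. Jones's description $G_E(m_1)=\ker\psi_{m_1}$, together with the product structure of $\Psi(m_1)$, reduces everything to the local signed counts $S_\ell=|Y_{\ell^\alpha,+}|-|Y_{\ell^\alpha,-}|$ (this is Proposition \ref{countingprop}). A vanishing $S_2$ then yields the exceptional bullets. However, the proposal stops where the actual content of the theorem begins.

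First, the identity $C^{m\divi}=|\Psi(m)|/|\GL_2(\Z/m\Z)|$ is only said to ``should follow'' from Howe plus equidistribution. Both the first bullet and the final formula rest on it. Since $C^{m\divi}$ is defined by the explicit formula \eqref{definitionofaveragedensity}, you have two options. You can evaluate both sides, as the paper does in Lemma \ref{FullGroupComputation} (via Lombardo--Perucca) and Lemma \ref{averagedensitycomputation} (from \eqref{omegaq}). Or you can invoke a precise theorem identifying $\omega_q(m)$ with the proportion of $\Psi(m)$ inside the coset $\{g:\det g\equiv q\}$, and then average over $q$. That would be a statement about curves over a fixed $\F_q$, not about $q$ varying.

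Second, no $S_\ell$ is actually computed; the values $-\ell^{2\alpha-1}$ and the list of vanishing cases are stated as expectations. For odd $\ell$, your parametrization $g=I-h$ is equivalent to the paper's count of $N_\alpha(d+1,d)$ with $\operatorname{tr}h=1-d$, and your heuristic is accurate. Among $h$ with $\det h\equiv0\pmod{\ell^\alpha}$, each trace $t\not\equiv0\pmod\ell$ occurs exactly $(\ell+1)\ell^{2\alpha-1}$ times. Hence $S_\ell=-(\ell+1)\ell^{3\alpha-2}+\#\{h:\det h\equiv0,\ \ell\mid\operatorname{tr}h\}$, and you need the last count to equal $(\ell+1)\ell^{3\alpha-2}-\ell^{2\alpha-1}$. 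Establishing these counts is exactly Lemmas \ref{Mxsize}--\ref{Nalphad+1d}; without them the odd part is unproven.

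For $\ell=2$, the one concrete mechanism you offer is false. The number of lifts of $g\in\Psi(2^\alpha)$ to $\Psi(2^{\alpha+1})$ is never $4$. By Lemma \ref{M2lifting} it is $8$ if $g\not\equiv I\pmod 2$, and $16$ or $0$ (according to $\delta(I-g)$) if $g\equiv I\pmod 2$. For $\alpha\ge3$ the character is constant on these fibres, so there is no uniform scaling.

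The relation $S_2(\alpha+1)=4S_2(\alpha)$ only emerges after cancellation. For instance, when $\Delta'_E\equiv1\pmod4$ the three transpositions contribute $-3\cdot8^{\alpha-1}$. This cancels the $3\cdot8^{\alpha-1}$ in $|G_1^\alpha(I)|=3\cdot2^{3(\alpha-1)}-2^{2\alpha-1}$. The surviving $-2^{2\alpha-1}$ propagates with factor $16$ through the two-step recursion $W_{\alpha-1}\to V_{\alpha-2}$, $V_{\alpha-2,0}\leftrightarrow W_{\alpha-3}$ (i.e.\ $h=2h'$).

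A proof must therefore do the following:
\begin{enumerate}
\item Separate $g \bmod 2$ into the identity, the transpositions and the $3$-cycles.
\item Show the transposition contributions vanish when $\Delta'_E\not\equiv1\pmod4$. The paper uses involutions $M\mapsto M+2X_0$ or $M+4X_0$ with $X_0\in\ker L_1\setminus\ker L_2$; these preserve the condition $\det(I-M)\equiv0$ and flip $\chi_4$ or $\chi_8$.
\item Explain why the identity contribution vanishes at $\alpha=2$, resp.\ $\alpha\in\{3,4\}$. The paper uses trace equidistribution, an involution $M\mapsto M+4X_0$, and for $\alpha=4$ the trace distribution on $V_{2,*}$ from Lemma \ref{V2startracecount}.
\item Run the $h=2h'$ recursion for large $\alpha$.
\end{enumerate}
None of this is in your proposal. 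As it stands, the $2$-adic evaluation---and with it the exceptional cases and the sign of the correction---remains unproven.
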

A direct calculation from Theorem \ref{maintheorem} gives the following consequence, in line with Howe's local weighted model.
\begin{corollary}\label{maincorollary}
    Let $E/\Q$ be a Serre curve and $m \geq 2$ be any positive integer. Then $C_E^{m\divi} > 1/m$.
\end{corollary}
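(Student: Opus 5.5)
From Theorem~\ref{maintheorem}, $C_E^{m\divi}$ is either equal to $C^{m\divi}$ or to
\[
\Bigl(C^{m_1\divi}+\prod_{\ell^\alpha\parallel m_1}\frac{-\ell^{2\alpha-1}}{|\GL_2(\Z/\ell^\alpha\Z)|}\Bigr)\,C^{m_2\divi}.
\]
The plan is to reduce everything to local factors. The key structural input, obtained along the way to Theorem~\ref{maintheorem} from the Chebotarev description, is that $C^{m\divi}$ is multiplicative in $m$:
\[
C^{m\divi}=\prod_{\ell^\alpha\parallel m}\delta(\ell^\alpha),
\]
where $\delta(\ell^\alpha)$ is the proportion of $g\in\GL_2(\Z/\ell^\alpha\Z)$ with $\det(g)+1-\Tr(g)\equiv 0\pmod{\ell^\alpha}$, i.e.\ with $\det(I-g)\equiv 0$. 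This holds because $\GL_2(\widehat{\Z})$ is a product of its $\ell$-adic factors.

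\textbf{Step 1: the full-image case.} Show $\delta(\ell^\alpha)>1/\ell^\alpha$ for every prime power, which handles the case $C_E^{m\divi}=C^{m\divi}$. I would count matrices with eigenvalue $1$ mod $\ell^\alpha$ using Howe's explicit computation, or directly by counting $g$ with $\det(I-g)\equiv 0$. Howe's formula gives, for $\ell^\alpha$,
\[
\delta(\ell^\alpha)=\frac{1}{\ell^{\alpha-1}(\ell-1)}\cdot\Bigl(1+O(\ell^{-1})\Bigr),
\]
with an explicit rational expression. For instance, for $\alpha=1$,
\[
\delta(\ell)=\frac{\ell^2-2}{(\ell-1)(\ell^2-1)}.
\]
A quick check gives $\delta(\ell)-1/\ell>0$, since $\ell(\ell^2-2)-(\ell-1)(\ell^2-1)=\ell^2+\ell-1>0$. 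For general $\alpha$ I would use the closed form (a geometric-type sum over the $\ell$-adic valuation of $I-g$) and verify $\ell^\alpha\delta(\ell^\alpha)>1$ by comparing term by term. Multiplying over $\ell^\alpha\parallel m$ then gives $C^{m\divi}>1/m$.

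\textbf{Step 2: the Serre-curve correction.} Here we need
\[
C^{m_1\divi}-\prod_{\ell^\alpha\parallel m_1}\frac{\ell^{2\alpha-1}}{|\GL_2(\Z/\ell^\alpha\Z)|}>\frac{1}{m_1}.
\]
The factor $C^{m_2\divi}>1/m_2$ then follows from Step 1. Using $|\GL_2(\Z/\ell^\alpha\Z)|=\ell^{4(\alpha-1)}\ell(\ell-1)^2(\ell+1)$, the correction factor at $\ell^\alpha$ is
\[
\frac{1}{\ell^{2\alpha-2}(\ell-1)^2(\ell+1)}.
\]
This is much smaller than the excess $\delta(\ell^\alpha)-\ell^{-\alpha}$, which is of order $\ell^{-\alpha-1}$.

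The argument is a product inequality. Write $a_\ell=\ell^\alpha\delta(\ell^\alpha)>1$ and let $b_\ell=\ell^\alpha\cdot(\text{correction at }\ell)$. We need
\[
\prod a_\ell-\prod b_\ell>1.
\]
It suffices to show $b_\ell<a_\ell-1$ for each $\ell$, because then $\prod b_\ell\le\prod(a_\ell-1)\le\prod a_\ell-1$, using $a_\ell-1\ge 0$ and $a_\ell\ge1$ (the product $\prod a_\ell$ dominates $1+\prod(a_\ell-1)$ when expanded).

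\textbf{Main obstacle.} The hardest step is the single-prime inequality $b_\ell<a_\ell-1$ for small primes, specifically $\ell=2$ (and possibly $\ell=3$) with small $\alpha$. There the correction
\[
\frac{2^\alpha}{2^{2\alpha-2}\cdot 3}=\frac{2^{2-\alpha}}{3}
\]
is comparable to $a_2-1$. For example, $\alpha=1$ gives $b_2=2/3$ against $a_2=2\delta(2)=4/3$, so $a_2-1=1/3$, and the per-prime bound fails. For $\ell=2$ I would therefore not use the crude product bound. Instead I would handle the $2$-part directly: show $a_2-b_2$ stays at least $1$, and more precisely check $a_2-b_2\ge 2/3$ while $\prod_{\ell\ne2}a_\ell-\prod_{\ell\neq 2}b_\ell$ compensates. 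This needs an explicit case check of $\alpha=1,2,3,4$ via the formula for $\delta(2^\alpha)$, plus the asymptotic for larger $\alpha$. The exceptional cases listed in Theorem~\ref{maintheorem}, where $C_E^{m\divi}=C^{m\divi}$, already remove some $v_2$ configurations, and I would use them to avoid the worst small cases. If the bound still fails, I would fall back on computing $C_E^{m_1\divi}$ directly as a Chebotarev proportion in the index-$2$ Serre subgroup, where positivity of the excess is visible from counting.
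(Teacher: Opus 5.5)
Your strategy is the same as the paper's. Set $a_\ell=\ell^\alpha C^{\ell^\alpha\divi}=\frac{\ell}{\ell-1}\bigl(1-\frac{1}{\ell^{\alpha-1}(\ell^2-1)}\bigr)$ and $b_\ell=\frac{\ell^{2-\alpha}}{(\ell-1)^2(\ell+1)}$ (your $b_\ell$ is correct). Prove $a_\ell-1>b_\ell$ prime by prime, then conclude from $\prod a_\ell>\prod(1+b_\ell)\ge 1+\prod b_\ell$. However, you leave open exactly the case that needs an extra idea.

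The per-prime step needs no case checks or asymptotics. One computation gives
\[
a_\ell-1-b_\ell=\frac{\ell^{\alpha-1}(\ell^2-1)-2\ell}{\ell^{\alpha-1}(\ell-1)(\ell^2-1)},
\]
which is positive for every $(\ell,\alpha)\neq(2,1)$. So neither $\ell=3$ nor $\ell=2$ with $\alpha\ge2$ is an obstacle. The only failure is $v_2(m_1)=1$, and there your suggested remedies do not work:
\begin{itemize}
\item The claim that $a_2-b_2$ ``stays at least $1$'' is false; it equals $\frac43-\frac23=\frac23$.
\item The exceptional cases of Theorem~\ref{maintheorem} concern $v_2(m)\in\{2,3,4\}$ and are irrelevant here.
\item ``The odd part compensates'' is exactly what must be proved.
\end{itemize}
Indeed, if $m_1=2$ the inequality you need is false, since then $\prod a_\ell-\prod b_\ell=\frac43-\frac23<1$.

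The missing input is that $m_1\neq 2$ in this branch. Since $m_E\mid m$ and $m_1=\gcd(m,m_E^\infty)$, we have $m_E\mid m_1$. Lemma~\ref{atleast6} gives $m_E\ge 6$, so $m_1\ge 6$, and hence $v_2(m_1)=1$ forces $m_1=2n$ with $n\ge 3$ odd. Put $a'=\prod_{\ell\mid n}a_\ell$ and $b'=\prod_{\ell\mid n}b_\ell$. The per-prime inequality at odd primes, now over a nonempty product, gives $a'>1+b'$. Hence
\[
\tfrac43 a'-\tfrac23 b'>\tfrac43(1+b')-\tfrac23 b'=\tfrac43+\tfrac23 b'>1,
\]
which is how the paper finishes. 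With this inserted, and the fallback to a direct Chebotarev count dropped, your argument coincides with the paper's.

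A small slip: $\ell(\ell^2-2)-(\ell-1)(\ell^2-1)=\ell^2-\ell-1$, not $\ell^2+\ell-1$. It is still positive for $\ell\ge 2$, so Step 1 is unaffected.
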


Theorem \ref{maintheorem} shows that, for Serre curves, the $m$-divisibility density $C_E^{m\divi}$ differs from the average density $C^{m\divi}$ by an explicit, typically small correction term that depends on $m$ and $m_E$. Since Jones \cite{MR2563740} proved that almost all elliptic curves are Serre curves, it is natural to expect that these correction terms vanish on average over large boxes $\mathcal{F} \coloneqq \mathcal{F}(A,B)$ of Weierstrass equations. Our second main result confirms this. We first recall the $k$-th power divisor sum:
$$\sigma_k(m) \coloneqq \sum_{d\mid m} d^k.$$

\begin{theorem}\label{maintheorem2}
    Fix positive integers $m \geq 2$ and $k \geq 1$. Then there exists $\gamma > 0$ independent of $m$ and $k$, such that
    $$\frac{1}{|\mathcal{F}|}\sum_{E \in \mathcal{F}}|C_E^{m\divi}-C^{m\divi}|^k \ll_k \frac{\sigma_{-k}(m)}{A\cdot \varphi(m)^k} + \frac{\log B \cdot \log^7A \cdot \sigma_{1-k}(m)}{B \cdot \varphi(m)^k}  + \frac{\log^\gamma(\min \{A,B\})}{\sqrt{\min\{A,B\}}}.$$
    In particular, if $m$ is odd or $m \in \{2,4,8,16\}$, then
    $$\frac{1}{|\mathcal{F}|}\sum_{E \in \mathcal{F}}|C_E^{m\divi}-C^{m\divi}|^k \ll \frac{\log^\gamma(\min \{A,B\})}{\sqrt{\min\{A,B\}}}.$$
\end{theorem}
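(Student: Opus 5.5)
We split the family $\mathcal{F}$ into Serre curves and non-Serre curves, and bound the two contributions separately. For non-Serre curves we use only the trivial bound $|C_E^{m\divi}-C^{m\divi}|\le 1$, since both quantities are densities in $[0,1]$. We then invoke Jones' theorem \cite{MR2563740}, in its quantitative form, that the number of $E\in\mathcal{F}$ which are not Serre curves is $\ll |\mathcal{F}|\,\log^\gamma(\min\{A,B\})/\sqrt{\min\{A,B\}}$. This accounts for the last error term, and $\gamma$ is independent of $m$ and $k$.

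For Serre curves we apply Theorem \ref{maintheorem}. In the first family of cases the difference vanishes, so only curves with $m_E\mid m$ (up to the $2$-adic exceptions) contribute. In the remaining cases $m_1=\gcd(m,m_E^\infty)$, and we must bound
\[
\Bigl|C^{m_1\divi}C^{m_2\divi}+\prod_{\ell^\alpha\parallel m_1}\frac{-\ell^{2\alpha-1}}{|\GL_2(\Z/\ell^\alpha\Z)|}\,C^{m_2\divi}-C^{m\divi}\Bigr|.
\]
Since the average density $C^{m\divi}$ comes from Howe's formula, it should be multiplicative in $m$; this needs to be checked from \eqref{definitionofaveragedensity}. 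Granting multiplicativity, the first and last terms cancel. Using $|\GL_2(\Z/\ell^\alpha\Z)|\asymp \ell^{4\alpha}$ and $C^{m_2\divi}\asymp 1/\varphi(m_2)$, the correction is $\ll 1/(m_1\varphi(m_1)\varphi(m_2))\ll 1/(m_1\varphi(m))$. Raising this to the $k$-th power gives a contribution $\ll m_1^{-k}\varphi(m)^{-k}$.

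Now recall that for a Serre curve the adelic level is $m_E=\mathrm{lcm}(2,|\Delta_{\mathbb{Q}(\sqrt{\Delta_E})}|)$, which is determined by the squarefree part $\Delta'_E$. The condition $m_E\mid m$ therefore forces $\Delta'_E$ to lie in a bounded set of divisors of $4m$. Grouping curves by $d=m_E\mid m$, we have $m_1\ge d$. The total contribution is thus
\[
\ll \varphi(m)^{-k}\sum_{d\mid m} d^{-k}\cdot \frac{\#\{E\in\mathcal{F}: \Delta'_E \text{ corresponds to } d\}}{|\mathcal{F}|}.
\]
The key step is counting pairs $(a,b)$ with $|a|\le A$, $|b|\le B$ such that $-(4a^3+27b^2)=d\,y^2$ up to a bounded factor. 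For each fixed $a$, this is the number of $b$ for which $27b^2+4a^3$ equals $-d$ times a square, i.e. integer points on a conic. There are two sources of solutions. First, trivial solutions exist when $d$ divides suitable expressions, contributing roughly $1/A$ in proportion per $d$ (this comes from the degenerate situation where $-3d$ is a square times a unit, which forces restrictions on $a$). Second, there are genuine conic points, with $\ll d\,\log B\,\log^7 A$ solutions, which come from divisor-function bounds on the representations $(\sqrt{-27}b-\cdot)(\cdots)$ in $\mathbb{Q}(\sqrt{-3d})$. Summing $d^{-k}\cdot(1/A)$ over $d\mid m$ gives $\sigma_{-k}(m)/A$, and summing $d^{-k}\cdot d\log B\log^7A/B$ gives $\sigma_{1-k}(m)\log B\log^7A/B$.

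The main obstacle is this uniform count of curves in the box with prescribed squarefree discriminant part. We would approach it by writing $4a^3+27b^2=-dy^2$, fixing $a$, and bounding the representations of $-4a^3$ by the binary form $27b^2+dy^2$ by $\ll d^{o(1)}\tau(a^3)$-type quantities. Averaging $\tau(a^3)$ over $a\le A$ then produces the $\log^7 A$ factor.

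For the final claim: if $m$ is odd, then $m_E$ (always even) cannot divide $m$, so the main term vanishes identically. For $m\in\{2,4,8,16\}$, we have $m_E\mid m$ only for the exceptional $2$-adic configurations covered by the second and third bullets of Theorem \ref{maintheorem}, together with $m_E=2$, which is impossible for a Serre curve. Hence in these cases only the non-Serre term remains.
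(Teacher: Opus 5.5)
\paragraph{Main inequality.} Your argument here follows the paper's route. You handle non-Serre curves with Jones's count and the trivial bound. For Serre curves you combine Theorem~\ref{maintheorem} with multiplicativity of $C^{m\divi}$ to bound the discrepancy by $\ll 1/(m_1\varphi(m))$; the paper proves the sharper $1/(m_1^2\varphi(m_2))$ but then loses the extra factor in the same way. You then group curves by a divisor $d\mid m$ that bounds $|\Delta'_E|$.

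The curve count you single out as the main obstacle is exactly \cite[Lemma 22]{MR2534114}, namely $\#\{E\in\mathcal{F}^{\text{Serre}}:|\Delta'_E|\le d\}\ll B+\log B\cdot A\log^7A\cdot d$. The paper simply quotes this. Your conic/divisor-function sketch is only heuristic as written; for instance, the obvious degenerate source of a term of size $B$ is the line $a=0$, where $\Delta'_E=-3$ for every $b$. So cite the lemma rather than rely on the sketch.

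\paragraph{The gap: the final claim for $m\in\{8,16\}$.} You assert that for $m\in\{2,4,8,16\}$ the condition $m_E\mid m$ occurs only in the configurations of the second and third bullets, or with $m_E=2$. This misses the case $\Delta'_E=-1$:
\begin{itemize}
\item Since $\Delta'_E\equiv 3\pmod 4$, Proposition~\ref{sizeofmE} gives $m_E=4$. Hence $m_E\mid 8$ and $m_E\mid 16$, while $v_2(m)\ge 3\neq 2$, so the second bullet does not apply.
\item Theorem~\ref{maintheorem}, via Lemma~\ref{Delta'E3mod4}, would then give a discrepancy of absolute value $2^{2\alpha-1}/|\GL_2(\Z/2^\alpha\Z)|\neq 0$.
\item Such curves would lie in $\mathcal{F}_2$. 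Your first estimate controls them only by the terms $1/A$ and $\log B\log^7A/B$, which are not $\ll \log^\gamma(\min\{A,B\})/\sqrt{\min\{A,B\}}$ in general.
\end{itemize}

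The missing input is Lemma~\ref{atleast6}: every Serre curve has $m_E\ge 6$, so no Serre curve has $\Delta'_E=-1$. This is a separate fact; it does not follow merely from $\Delta_E$ being a nonsquare.

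With this lemma the argument closes as in the paper. For $m\in\{2,4,8,16\}$, the condition $m_E\mid m$ forces $m_E=8$, since $m_E=16$ cannot occur when $\Delta'_E$ is squarefree. Thus $\Delta'_E=\pm 2$, and $v_2(m)\in\{3,4\}$ puts you in the third bullet. Hence $\mathcal{F}_2=\emptyset$. Your argument for $m=2$, $m=4$ and odd $m$ is fine as it stands.
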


\begin{remark}
    Let $m$ and $k$ be chosen as in Theorem \ref{maintheorem2}. Observe that
    $$\frac{\sigma_{1-k}(m)}{\varphi(m)^k} \leq \frac{\sigma_0(m)}{\varphi(m)} = \prod_{\ell^\alpha \parallel m} \frac{\alpha+1}{\ell^{\alpha-1}(\ell-1)} \leq 2,$$
    since 
    $$\frac{\sigma_0(\ell^\alpha)}{\varphi(\ell^\alpha)} \leq \begin{cases}
        2 & \text{ if } \ell = 2 \text{ and } \alpha \leq 2, \\
        1 & \text{ otherwise}. \end{cases}$$
    Theorem \ref{maintheorem2} states that if $A \coloneqq A(x)$ and $B \coloneqq B(x)$ both tend to infinity with $x$, then provided that 
    $$\lim_{x\to \infty} \frac{\log B \cdot \log^7 A}{B} = 0,$$
    we have
    $$\frac{1}{|\mathcal{F}|} \sum_{E\in \mathcal{F}} C_E^{m\divi} \to C^{m\divi},$$
    as $x \to \infty$. In particular, the average of the densities $C_E^{m\divi}$ agrees with the average density $C^{m\divi}$.
\end{remark}

\begin{remark}\label{naturalperspective}
The heuristic that $C_E^{m\divi}$ should be compared with $1/\varphi(m)$, rather than with $1/m$, is also natural from the classical analogy between $E_p(\F_p)$ and $\F_p^\times$. As will be discussed in more detail in Section \ref{mdivisibilityprime}, for a prime $p\nmid mN_E$, whether $p$ is an $m$-divisibility prime for $E$ is determined by the Frobenius element above $p$ in $\Q(E[m])$, the $m$-torsion field. More precisely, if $\rho_{E,m}$ denotes the mod $m$ Galois representation attached to $E$, then
\begin{equation}\label{ECsetting}
    \det(I-\rho_{E,m}(\Frob_p)) \equiv 0 \pmod m \iff m \mid \#E_p(\F_p)
\end{equation}
The classical number theoretic analogue is obtained by replacing the two-dimensional representation $\rho_{E,m}$ on $E[m]$ with the one-dimensional cyclotomic character $\chi_m$ on $\mu_m$, the $m$-th roots of unity. Then, \eqref{ECsetting} is replaced by 
$$  1 - \chi_m(\Frob_p) \equiv 0 \pmod m \iff m \mid \#\F_p^\times \iff p \equiv 1 \pmod m.$$
Thus, in the classical setting, the analogue of $\#E_p(\F_p)$ is not a randomly chosen integer of size roughly $p$, but rather the order of the multiplicative group $\#\F_p^\times=p-1$. The density of primes satisfying $p \equiv 1 \pmod m$ is $1/\varphi(m)$, which is close to the $m$-divisibility density in the generic elliptic curve case.

This analogy is not limited to the $m$-divisibility question. Artin's primitive root conjecture asks for the asymptotic behavior of
$$\#\{p \leq x : \langle a \pmod p\rangle = \F_p^\times\},$$
where $a$ is an integer which is neither $-1$ nor a perfect square. Lang and Trotter \cite{MR427273} formulated the corresponding elliptic curve analogue: given an elliptic curve $E/\Q$ and a point $P \in E(\Q)$ of infinite order, what is the asymptotic behavior of
$$\#\{p \leq x : \langle P \pmod p \rangle = E_p(\F_p)\}?$$
In this sense, the comparison between $E_p(\F_p)$ and $\F_p^\times$ has long served as a guiding principle in the study of reductions of elliptic curves.
\end{remark}

\begin{remark}\label{JonesRouse}
    Let $E/\Q$ be an elliptic curve of conductor $N_E$, and let $P\in E(\Q)$ satisfy $P \not \in 2E(\Q)$ and $P \not \in 2E(\Q(E[4])$. Jones and Rouse \cite[Theorem 1.1]{MR2640290} proved that the density of primes $p$ for which $P\bmod p$ has odd order in $E_p(\F_p)$ is $11/21$. Thus, the reductions of $P$ exhibit a bias toward having odd order. 

    In this remark, we show that our results provide a simple heuristic explanation for this phenomenon. Let $v_2$ denote the $2$-adic valuation. Let $G$ be a finite abelian group such that $v_2(|G|) = k$. Using an elementary algebraic argument, one can confirm that the proportion of elements of even order in $G$ is $1-1/2^k$.

    Suppose that $E$ has a surjective $2$-adic Galois representation. The density of primes $p$ for which $v_2(|E_p(\F_p)|) = k$ is given by 
    $$C_E^{2^{k}\divi} - C_E^{2^{k+1}\divi} = \frac{1}{\varphi(2^{k+1})}\left(1-\frac{1}{3\varphi(2^{k+1})}\right) - \frac{1}{\varphi(2^k)}\left(1-\frac{1}{3\varphi(2^k)}\right) = \frac{1}{2^k} - \frac{1}{4^k}.$$
    If one regards the reduction of $P$ modulo $p$ as a uniformly distributed element of $E_p(\F_p)$, then the expected density of primes for which $P\bmod p$ has even order is
    $$\sum_{k\geq 1} \text{Pr}(v_2(|E_p(\F_p)|=k) \cdot \text{Pr}(v_2(\text{ord}(x)\geq 1))= \sum_{k \geq 1} \left(\frac{1}{2^k}-\frac{1}{4^k}\right)\left(1-\frac{1}{2^k}\right) = \frac{10}{21}.$$
    The corresponding expected density for odd order is therefore $11/21$, agreeing with the theorem of Jones and Rouse.

In their paper, Jones and Rouse determined the density of primes $p$ for which the order of $P\bmod p$ is coprime to a fixed prime $\ell$, assuming that $\ell$-adic Galois representation is surjective and $P \not \in \ell E(\Q)$. The same elementary heuristic, applied to the $\ell$-primary part of $E_p(\F_p)$, reproduces their density for every prime $\ell$.

More broadly, understanding the distribution of $P\bmod p$ for a fixed point $P\in E(\Q)$ is a longstanding problem in arithmetic geometry, dating back to the primitive root conjecture of Lang and Trotter. While such questions are naturally governed by arboreal Galois representations, the calculation above suggests that useful intuition may be obtained by combining information about the distribution of the groups $E_p(\F_p)$ with the heuristic assumption that $P\bmod p$ behaves like a uniformly distributed element of $E_p(\F_p)$.
\end{remark}

The perspective in Remark \ref{naturalperspective} suggests further questions. Although $\#\F_p^\times=p-1$ is a natural analogue of $\#E_p(\F_p)$, there is an important difference between the two quantities modulo $m$. For $p \nmid m$, $p$ must be invertible modulo $m$, so $p-1$ cannot range freely over all residue classes modulo $m$. In contrast, $\#E_p(\F_p)$ is not subject to the same restriction. Thus, if the congruence class $0 \pmod m$ occurs more often than $1/m$, then some other congruence classes must occur less often. This leads to the following natural question.

\begin{question}\label{modmdist}
Fix an integer $r$ and a positive integer $m$. What is the asymptotic behavior of
$$\#\{p \leq x : p \nmid mN_E, \#E_p(\F_p) \equiv r \pmod m\}?$$
In particular, does this distribution exhibit a bias among congruence classes modulo $m$? Furthermore, is there a universal pattern?
\end{question}

There is also a closely related and deeper question. Reducing \eqref{Frobtrace} modulo $m$, we have
$$a_p(E) \equiv p+1 - \#E_p(\F_p) \pmod m.$$
By the prime number theorem for arithmetic progressions, the primes $p\nmid m$ are equidistributed among the invertible congruence classes modulo $m$. However, as observed from Theorem \ref{maintheorem} and Theorem \ref{maintheorem2}, the quantity $\#E_p(\F_p)$ is not expected to be uniformly distributed modulo $m$. Therefore, answering Question \ref{modmdist} may also shed light on the following question.

\begin{question}
Fix an integer $r$ and a positive integer $m$. What is the asymptotic behavior of
$$\#\{p \leq x : p \nmid mN_E, a_p(E) \equiv r\pmod m\} ?$$
In particular, does this distribution exhibit a bias among congruence classes modulo $m$? Furthermore, is there a universal pattern?
\end{question}

The distribution of Frobenius traces, for a fixed elliptic curve or for families of elliptic curves, has been studied extensively since the work of Lang and Trotter \cite{MR568299}. See \cite{MR1144318, MR1062334, MR1694997, MR3848226, MR935007, MR644559, MR3453123} for results concerning a single elliptic curve $E$, and \cite{MR1677267, MR3102527, MR2106483, MR2163519, MR3395787, MR1382477} for results concerning certain families of elliptic curves. By contrast, the distribution of $a_p(E)$ modulo a fixed integer $m$ appears to have received less systematic attention.

Nevertheless, some related congruence class phenomena have been studied. For instance, Jarov, Khadra, and Walji \cite{MR4810094} studied the distribution, in congruence classes of primes, of primes having a fixed Frobenius trace on average over thin families of elliptic curves. On the other hand, Jones and Vissuet \cite{MR4928015} explicitly studied elliptic curves whose Frobenius traces avoid certain congruence classes modulo $m$. The results of the present paper, together with an answer to Question \ref{modmdist}, may provide a starting point for a more systematic study of Frobenius traces modulo $m$.

The paper is organized as follows. In Section \ref{preliminaries}, we review the necessary background; in particular, we recall Galois representations, Serre curves, and known $m$-divisibility densities. We also prove Theorem \ref{maintheorem} in the case that $G_E(m)$ is the full group. In Section \ref{S:index2density}, we establish several lemmas and prove Theorem \ref{maintheorem} in the case that $E$ is a Serre curve and $G_E(m)$ is the index $2$ subgroup of $\GL_2(\Z/m\Z)$. In Section \ref{maintheorem2proof}, we prove Theorem \ref{maintheorem2} and give further analysis of the density $C_E^{m\divi}$.

Here is the list of notation that will be used throughout the paper:
\begin{itemize}
\itemindent=-13pt
\item We often write $(a,b)$ under a summation sign as shorthand for $\gcd(a,b)$.
\item We often write $a\equiv b \,(m)$ under a summation sign as shorthand for $a \equiv b \pmod m$.
\item For a commutative ring $R$, we write $M_2(R)$ for the additive group of all $2\times 2$ matrices with entries in $R$.
\item For an odd prime $\ell$, $\left(\frac{\cdot}{\ell}\right)$ denotes the Legendre symbol.
\item For an arbitrary group $G \subseteq \GL_2(\widehat{\Z})$, $G(m)$ denotes the image of $G$ under the reduction modulo $m$ map.
\item For functions $f,g : \R \to \R$, we write $f \ll g$ if there exist constants $C > 0$ and $x_0 \geq 0$ such that $|f(x)| \leq C|g(x)|$ for any $x > x_0$. If the constant $C$ depends on a parameter $k$, we write $f \ll_k g$.
\item For positive real numbers $A$ and $B$, $\mathcal{F}\coloneqq \mathcal{F}(A,B)$ denotes the family of models $Y^2=X^3+aX+b$ of elliptic curves for which $|a|\leq A$ and $|b| \leq B$.
\item Given a subfamily $\mathcal{G} \subseteq \mathcal{F}$ and functions $f,g :\mathcal{G} \to \R$, we write $f \ll g$ if there exists a constant $C > 0$ such that $|f(E)| \leq C|g(E)|$ for any $E \in \mathcal{G}$. If $C$ depends on a parameter $k$, we write $f \ll_k g$.
\item For a positive integer $k$, $\sigma_k$ denotes the $k$-th power divisor sum.
\item $\varphi$ denotes the Euler totient function.
\end{itemize}

\subsection*{Acknowledgements}
This project originated from the Informal Number Theory Series, an online seminar organized by the YouTube channel ``Enjoying Math''. The authors would like to thank Geonhee Cho and Garen Chiloyan for their work as organizers of the seminar. The authors are also grateful to Jeremy Rouse for helpful discussions concerning the main theorem, and for bringing to their attention the theorem mentioned in Remark \ref{JonesRouse}.

\section{Preliminaries}\label{preliminaries}
\subsection{Galois Representations}
Let $E/\Q$ be an elliptic curve. For any positive integer $n$, let $E[n]$ be the $n$-torsion subgroup of $E(\overline{\Q})$. It is known that $E[n]$ is a free $\Z/n\Z$-module of rank 2. We define the Tate module $T(E) \coloneqq \varprojlim_n E[n]$. Likewise, $T(E)$ is a free $\widehat{\Z}$-module of rank $2$.

The absolute Galois group $G_\Q \coloneqq \Gal(\overline{\Q}/\Q)$ naturally acts on $E[n]$ and $T(E)$. Choosing a $\widehat{\Z}$-basis of $T(E)$, equivalently a compatible system of bases of $E[n]$, we define the \textit{adelic Galois representation} of $E$ as follows:
$$\rho_E : G_\Q \to \Aut(T(E)) \simeq \GL_2(\widehat{\Z}).$$
Composing $\rho_E$ with the natural projection map $\pi_n : \GL_2(\widehat{\Z}) \to \GL_2(\Z/n\Z)$, we define the \textit{mod $n$ Galois representation} $\rho_{E,n}$. The images of $\rho_E$ and $\rho_{E,n}$ are denoted by $G_E$ and $G_E(n)$, respectively. These subgroups are well-defined up to conjugation.

When $E$ is non-CM, Serre proved the following fundamental result, known as the Serre's open image theorem.
\begin{theorem}[Serre, \protect{\cite[Th\'eor\`eme 3]{MR387283}}] \label{SOIT}
Let $E/\Q$ be a non-CM elliptic curve. Then $G_E$ is an open subgroup of $ \GL_2(\widehat{\Z})$. In particular, the adelic index $[\GL_2(\widehat{\Z}) : G_E]$ is finite.
\end{theorem}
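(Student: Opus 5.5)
Since Theorem~\ref{SOIT} is quoted from Serre and used as a black box in this paper, I would only sketch the standard strategy, organized in three steps: an $\ell$-adic statement for each prime, a mod $\ell$ surjectivity statement for almost all primes, and an assembly step that passes from the individual factors to the full adelic image. Throughout, $\rho_{E,\ell^\infty}$ denotes the $\ell$-adic representation attached to the $\ell$-primary Tate module; this notation is not used elsewhere in the paper.

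\textbf{Step 1 ($\ell$-adic openness).} Fix a prime $\ell$. The image $G_{\ell}\coloneqq\rho_{E,\ell^\infty}(G_\Q)$ is a closed subgroup of $\GL_2(\Z_\ell)$, hence an $\ell$-adic Lie group with a Lie algebra $\mathfrak g\subseteq \mathfrak{gl}_2(\Q_\ell)$.
\begin{itemize}
\item Since $\det\rho_{E,\ell^\infty}$ is the $\ell$-adic cyclotomic character, whose image is open in $\Z_\ell^\times$, the algebra $\mathfrak g$ is not contained in $\mathfrak{sl}_2$.
\item Next I would show that $V_\ell E$ is irreducible under every open subgroup of $G_\Q$, that is, $G_\ell$ is not virtually contained in a Borel or Cartan subgroup. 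If $\mathfrak g$ fixed a line, finite-extension base change would produce infinitely many cyclic isogenies of $\ell$-power degree. This is impossible for non-CM $E$ by finiteness of isogeny classes (Shafarevich/Faltings), or, as in Serre's original argument, by using a prime of potentially multiplicative reduction via the Tate curve when $j\notin\Z$.
\item If $\mathfrak g$ were a Cartan subalgebra, then $\operatorname{End}(E_{\overline\Q})$ would be larger than $\Z$ by the Tate conjecture for isogenies (Faltings), contradicting non-CM.
\end{itemize}
Hence $\mathfrak g=\mathfrak{gl}_2$, and $G_\ell$ is open in $\GL_2(\Z_\ell)$.

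\textbf{Step 2 (surjectivity mod $\ell$ for $\ell\gg 0$).} This is the heart of the proof and the step I expect to be the main obstacle. By Dickson's classification, a proper subgroup $G_E(\ell)\subsetneq\GL_2(\F_\ell)$ with surjective determinant lies in one of three types of subgroups, and each must be excluded for all sufficiently large $\ell$:
\begin{itemize}
\item \emph{A Borel subgroup.} This would give a rational $\ell$-isogeny. It is excluded for large $\ell$ by the effective isogeny theorem or by Serre's analysis of the isogeny characters at inertia.
\item \emph{The normalizer of a Cartan subgroup.} The associated quadratic character is unramified outside $N_E$. Raynaud's description of tame inertia at $\ell$ via fundamental characters of level $1$ or $2$ then forces a contradiction once $\ell\nmid N_E$ is large.
\item \emph{An exceptional group with projective image $A_4$, $S_4$ or $A_5$.} The inertia image at $\ell$ contains elements whose projective order is at least $(\ell-1)/4$, which is too large for these groups once $\ell$ is large.
\end{itemize}
For $\ell\ge5$, surjectivity mod $\ell$ lifts to surjectivity onto $\GL_2(\Z_\ell)$, since any closed subgroup of $\SL_2(\Z_\ell)$ surjecting onto $\SL_2(\F_\ell)$ is all of $\SL_2(\Z_\ell)$.

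\textbf{Step 3 (assembly).} Let $S$ be the finite set consisting of $2$, $3$, the primes dividing $N_E$, and the finitely many exceptional primes from Step 2.
\begin{itemize}
\item It suffices to show that $G_E\cap\SL_2(\widehat\Z)$ is open, because $\det\rho_E$ is the full cyclotomic character onto $\widehat\Z^\times$.
\item For $\ell\notin S$, the closure of the commutator subgroup of $G_E$ projects onto $\SL_2(\Z_\ell)$. The groups $\SL_2(\Z_\ell)$ for distinct $\ell\ge5$ share no common simple Jordan--Hölder factors: their nonabelian factors are $\operatorname{PSL}_2(\F_\ell)$, which are pairwise nonisomorphic, and their abelian factors are of order $\ell$. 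A Goursat-lemma argument then shows that this commutator subgroup contains $\prod_{\ell\notin S}\SL_2(\Z_\ell)$.
\item For the finitely many $\ell\in S$, Step 1 provides open images in each factor. A final Goursat argument, using that the projection to $\prod_{\ell\in S}$ commutes with the large factor up to finite quotients, yields an open subgroup of $\prod_{\ell\in S}\SL_2(\Z_\ell)$ inside $G_E$.
\end{itemize}
Combining these, $G_E$ is open, and the adelic index $[\GL_2(\widehat\Z):G_E]$ is finite.
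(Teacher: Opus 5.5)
The paper gives no proof of this theorem. It is quoted from Serre and used as a black box, so the relevant benchmark is Serre's own argument, and your three-step outline follows it. There is, however, a genuine gap in Step 2, in the Cartan-normalizer case.

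Raynaud's description of tame inertia cannot by itself produce a contradiction. A CM curve has $G_E(\ell)$ inside the normalizer $N(C)$ of a Cartan subgroup $C$ for every large $\ell$, in complete agreement with Raynaud. Raynaud's role is only local: for $\ell\ge5$ with $\ell\nmid N_E$, it shows that inertia at $\ell$ maps into $C$ itself. Hence the character $\varepsilon_\ell\colon G_\Q\to N(C)/C\cong\{\pm1\}$ is unramified at $\ell$; this is exactly the unramifiedness you assert before invoking Raynaud.

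The contradiction is global, and this is where the non-CM hypothesis must enter. The quadratic characters unramified outside $N_E$ form a finite set independent of $\ell$. For each such $\varepsilon$, choose a prime $p\nmid N_E$ with $\varepsilon(\Frob_p)=-1$ and $a_p(E)\neq 0$. Such $p$ exists because Step 1 and the Chebotarev density theorem show that $a_p(E)=0$ only on a density-zero set of primes. Every element of $N(C)\setminus C$ has trace $0$, so $\varepsilon_\ell=\varepsilon$ forces $\ell\mid a_p(E)$, and $|a_p(E)|\le 2\sqrt p$ then bounds $\ell$.

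The case $G_E(\ell)\subseteq C$ needs its own argument, since there $\varepsilon_\ell$ is trivial and the trace argument says nothing. A split Cartan lies in a Borel. A non-split Cartan cannot contain the ordinary inertia image, which has two distinct eigenvalues in $\F_\ell$. Supersingular inertia fills $C$, which makes $\Q(E[\ell])/\Q$ abelian with inertia at $\ell$ of order $\ell^2-1$. Kronecker--Weber forbids this, because the prime-to-$\ell$ part of inertia at $\ell$ in an abelian extension of $\Q$ has order dividing $\ell-1$.

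Step 3 also needs repair in two places.

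First, the groups $\mathrm{SL}_2(\Z_\ell)$ do share a Jordan--H\"older factor, namely $\Z/2\Z$ coming from $\pm I$. So the claim that their abelian factors have order $\ell$ is false. What the Goursat argument actually uses is that $\mathrm{SL}_2(\Z_\ell)$ is perfect for $\ell\ge5$, with $\operatorname{PSL}_2(\F_\ell)$ as its only simple quotient. This shows that the commutator subgroup \emph{projects onto} $\prod_{\ell\notin S}\mathrm{SL}_2(\Z_\ell)$. Genuine containment additionally requires that no $\operatorname{PSL}_2(\F_\ell)$ with $\ell\notin S$ occurs in the $S$-part. That is why one also puts $5$ into $S$: $\operatorname{PSL}_2(\F_5)\cong A_5$ embeds in $\operatorname{PSL}_2(\F_p)$ for $p\equiv\pm1\pmod 5$.

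Second, your final bullet is not yet an argument. Openness of the image in $\prod_{\ell\in S}\GL_2(\Z_\ell)$ does follow from Step 1, but only because virtually pro-$\ell$ groups for distinct $\ell$ have only finite common quotients. The Goursat quotient between the $S$-part and $\prod_{\ell\notin S}\mathrm{SL}_2(\Z_\ell)$ is finite for the following reasons:
(i) it has an open pronilpotent subgroup coming from the $S$-side;
(ii) the preimage of that subgroup contains all but finitely many of the perfect factors $\mathrm{SL}_2(\Z_\ell)$, which therefore map trivially;
(iii) what remains is a finite product over primes outside $S$, to which the previous observation applies.
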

Theorem \ref{SOIT} has the following corollary: there exists a positive integer $M$ such that
\begin{equation}\label{soitcor}
    G_E = \pi^{-1}(G_E(M))
\end{equation}
where $\pi : \GL_2(\widehat{\Z}) \to \GL_2(\Z/M\Z)$ denotes the regular projection. The smallest positive integer satisfying \eqref{soitcor} is called the adelic level of $E$, and is denoted by $m_E$. The adelic level is useful because it allows one to reduce computations involving the possibly complicated $G_E$ to computations at a finite level. Because of this usefulness, the adelic level has recently been incorporated into the LMFDB \cite{lmfdb} and has become an object of active study.

\begin{remark}
    We say that a prime $\ell$ is \textit{exceptional} for $E$ if $\rho_{E,\ell}$ is not surjective. In some literature, the term is used instead for primes $\ell$ for which the $\ell$-adic Galois representation 
    $$\rho_{E,\ell^\infty}:G_\Q\to \Aut(\varprojlim_n E[\ell^n]) \simeq \GL_2(\Z_\ell)$$
    is not surjective. For $\ell \geq 5$, these two definitions agree (see \cite[IV-23, Lemma 3]{MR263823}), while the implication fails for small primes; see \cite{elkies2006ellipticcurves3adicgalois}
    for $\ell=3$ and \cite{MR2995149} for $\ell=2$. The possible images of mod $\ell$ and $\ell$-adic Galois representations attached to elliptic curves over $\Q$ have been studied extensively; see, for example,\cite{zywina2015possibleimagesmodell,MR3482279,MR4468989}.

    The adelic level $m_E$ detects exceptional primes in the sense that if $\ell$ is exceptional, then $\ell \mid m_E$. The converse is not necessarily true, due to the phenomenon known as \textit{entanglement}; see \cite{MR4605883} for a detailed discussion of this terminology.
\end{remark}

Let $E$ be an elliptic curve of conductor $N_E$ and $p \nmid N_E$ be a rational prime. Let $\Frob_p$ denote a Frobenius element of $G_\Q$ above $p$, which is well-defined up to conjugacy; see \cite[2.1, I-6]{MR1484415} for the definition of $\Frob_p$. For any integer $m$ coprime to $p$, the mod $m$ representation satisfies
\begin{equation}\label{wellknownequation}
    \det(\rho_{E,m}(\Frob_p)) \equiv p \pmod m
    \quad \text{and} \quad
    \Tr(\rho_{E,m}(\Frob_p)) \equiv a_p(E) \pmod m.
\end{equation}
(See \cite[Chapter III. Proposition 8.6]{MR2514094}.) Let $\chi:G_\Q\to \widehat{\Z}^{\times}$ be the adelic cyclotomic character. By the Weil pairing, we have
\begin{equation}\label{WeilPairing}
    \det\circ \rho_E = \chi.
\end{equation}
Consequently, the adelic image $G_E$ has full determinant. In particular, for every positive integer $m$,
$$\det G_E(m) = (\Z/m\Z)^\times.$$

We introduce a useful lemma regarding the adelic level and conclude the section.
\begin{lemma}\label{CRTforEllipticCurve}
    Let $E/\Q$ be a non-CM elliptic curve of adelic level $m_E$. For any positive integers $m_1$ and $m_2$ with $m_1 \mid m_E^\infty$ and $\gcd(m_2,m_E) = 1$, we have
    $$G_E(m_1m_2) \simeq G_E(m_1) \times \GL_2(\Z/m_2\Z).$$
\end{lemma}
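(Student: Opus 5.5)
The plan is to pass through the full adelic image and use the defining property of the adelic level. By \eqref{soitcor}, $G_E = \pi^{-1}(G_E(m_E))$, where $\pi:\GL_2(\widehat{\Z})\to\GL_2(\Z/m_E\Z)$. Reducing modulo $m_1m_2$ gives $G_E(m_1m_2)$ as the image of $G_E$ in $\GL_2(\Z/m_1m_2\Z)$. Since $\gcd(m_1,m_2)=1$, the Chinese remainder theorem identifies $\GL_2(\Z/m_1m_2\Z)\simeq \GL_2(\Z/m_1\Z)\times\GL_2(\Z/m_2\Z)$. Under this identification, $G_E(m_1m_2)$ lands inside $G_E(m_1)\times \GL_2(\Z/m_2\Z)$, and its projections to the two factors are $G_E(m_1)$ and $G_E(m_2)$. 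What remains is to show that the image is the whole product.

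To get this, I will decompose $\widehat{\Z}$ into prime components. Write $S$ for the set of primes dividing $m_E$. Then $\GL_2(\widehat{\Z})=\GL_2(\Z_S)\times \GL_2(\Z_{S'})$, where $\Z_S=\prod_{\ell\in S}\Z_\ell$ and $S'$ is the complement of $S$. The projection $\pi$ factors through the first factor, because $m_E$ is supported on $S$. Hence $G_E=H\times\GL_2(\Z_{S'})$, where $H$ is the preimage of $G_E(m_E)$ in $\GL_2(\Z_S)$. Since $m_1\mid m_E^\infty$, reduction mod $m_1$ factors through $\GL_2(\Z_S)$. Since $\gcd(m_2,m_E)=1$, reduction mod $m_2$ factors through $\GL_2(\Z_{S'})$. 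The reduction mod $m_1m_2$ of $H\times\GL_2(\Z_{S'})$ is therefore $H(m_1)\times \GL_2(\Z/m_2\Z)$, using that $\GL_2(\Z_{S'})\to\GL_2(\Z/m_2\Z)$ is surjective (reduction of $\GL_2$ over a product of local rings is surjective). Finally, $H(m_1)=G_E(m_1)$, because reducing $G_E=H\times\GL_2(\Z_{S'})$ modulo $m_1$ only sees $H$.

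The one point needing care is the claim $G_E=H\times \GL_2(\Z_{S'})$, that is, that the preimage under $\pi$ of a subgroup splits as a product. This holds because $\pi$ is trivial on the $S'$ factor, so membership in $\pi^{-1}(G_E(m_E))$ depends only on the $S$-component. I expect no real obstacle; the rest is bookkeeping with the Chinese remainder theorem.
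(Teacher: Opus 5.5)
Your proof is correct and complete. Because $\pi$ depends only on the $S$-component, $G_E=\pi^{-1}(G_E(m_E))$ splits as $H\times\GL_2(\Z_{S'})$, and reducing modulo $m_1m_2$ then gives the claim.

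The paper has no proof of its own here; it only cites \cite[Lemma 2.2]{Lee_Mayle_Wang_2025}. Your argument is a self-contained proof of that fact. It also shows that the isomorphism is the Chinese remainder map itself, not merely an abstract isomorphism. That is what Lemma~\ref{mixedgroupdecomposition} needs in order to split the condition $\det(I-M)\equiv 0 \pmod{m_1m_2}$.
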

\begin{proof}
    See \cite[Lemma 2.2]{Lee_Mayle_Wang_2025}.
\end{proof}

\subsection{Serre curve}\label{SerreCurve}
A natural question is whether there exists an elliptic curve $E/\Q$ for which the adelic Galois representation $\rho_E$ is surjective. Serre \cite{MR387283} observed that this is impossible, and we explain the idea here.

Suppose that $E$ is given by a factored Weierstrass equation
$$Y^2=(X-e_1)(X-e_2)(X-e_3).$$
Since $E/\Q$ is smooth and defined over $\Q$, the roots $e_1,e_2,e_3$ lie in $\overline{\Q}$ and are distinct. The $2$-torsion module of $E$ is equal to
$$E[2]=\{\mathcal{O},(e_1,0),(e_2,0),(e_3,0)\}\simeq \Z/2\Z\times \Z/2\Z,$$
where $\mathcal{O}$ denotes the identity element. After identifying the three nontrivial $2$-torsion points with the roots $e_1,e_2,e_3$, we regard $\Aut(E[2])$ as $S_3$, the symmetric group on three elements. The discriminant $\Delta_E$ is given by
\begin{equation}\label{Discriminant}
    \Delta_E \coloneqq 16[(e_1-e_2)(e_2-e_3)(e_3-e_1)]^2.
\end{equation}
It is known that if $\Delta_E\in(\Q^\times)^2$, then the adelic index is at least 12 \cite[Proposition 2.14]{MR4732685}. Hence, we restrict our attention to the case where $\Delta_E$ is not a square.

Let $\Delta'_E$ denote the squarefree part of the discriminant; that is, $\Delta'_E$ is the unique squarefree integer such that $\Delta_E/\Delta'_E$ is the largest square. Unlike $\Delta_E$, which depends on the Weierstrass model, $\Delta'_E$ is independent of the choice of the model.

Using the identification $\Aut(E[2])\simeq S_3$, for each $\sigma\in G_\Q$, we regard $\rho_{E,2}(\sigma)$ as a permutation of the roots $e_1,e_2,e_3$. Let $\epsilon:S_3\to\{\pm1\}$ be the sign character. Then
\begin{equation}\label{mod2map}
    \sigma(\sqrt{\Delta_E}) = \epsilon(\rho_{E,2}(\sigma))\sqrt{\Delta_E}.
\end{equation}
By the Kronecker--Weber theorem, there exists the smallest positive integer $d_E$, often called a \textit{conductor} of the field $\Q(\sqrt{\Delta_E})$, such that
$$\Q(\sqrt{\Delta_E})\subseteq \Q(\zeta_{d_E}).$$
Classical algebraic number theory gives
$$d_E = \begin{cases}
|\Delta'_E| & \text{if } \Delta'_E\equiv 1 \pmod 4,\\
4|\Delta'_E| & \text{otherwise}.
\end{cases}$$
Let $\chi_{d_E}:G_\Q\to(\Z/d_E\Z)^\times$ be the cyclotomic character. Then there exists a unique quadratic character $\alpha:(\Z/d_E\Z)^\times\to\{\pm1\}$ such that, for every $\sigma\in G_\Q$,
\begin{equation}\label{cyclotomic}
    \sigma(\sqrt{\Delta_E}) = \alpha (\chi_{d_E}(\sigma)) \sqrt{\Delta_E} = (\alpha \circ \det)(\rho_{E,d_E}(\sigma)) \sqrt{\Delta_E}.
\end{equation}
Here the second equality follows from \eqref{WeilPairing}. Comparing \eqref{mod2map} and \eqref{cyclotomic}, we obtain
\begin{equation}\label{Serreentanglement}
    \epsilon (\rho_{E,2}(\sigma)) = (\alpha \circ \det)(\rho_{E,d_E}(\sigma)).
\end{equation}
Let $M_E\coloneqq \operatorname{lcm}(2,d_E)$. Define
\begin{equation}\label{HeMe}
    H_E(M_E) \coloneqq \{ M \in \GL_2(\Z/M_E\Z ): \epsilon(M \neghs \pmod 2) = (\alpha \circ \det)(M \neghs \pmod {d_E})\}.
\end{equation}
One checks that $H_E(M_E)$ is an index $2$ subgroup of $\GL_2(\Z/M_E\Z)$. Let $\pi : \GL_2(\widehat{\Z}) \to \GL_2(\Z/M_E\Z)$ be the natural projection. Define
\begin{equation}\label{He}
    H_E \coloneqq \pi^{-1}(H_E(M_E)).
\end{equation}
It follows from \eqref{Serreentanglement} that $G_E \subseteq H_E$. Therefore, the adelic index of any non-CM elliptic curve over $\Q$ is at least two. We call a non-CM elliptic curve $E/\Q$ a \textit{Serre curve} if $G_E=H_E$.

\begin{remark}
    The phenomenon in \eqref{Serreentanglement} can also be described from a field-theoretic perspective. We just observed that the quadratic field $\Q(\sqrt{\Delta_E})$ is contained both in $\Q(E[2])$ and in the cyclotomic field $\Q(\zeta_{d_E})$, which is contained in $\Q(E[d_E])$ by the Weil pairing. Consequently, the mod $M_E$ Galois representation cannot be surjective, since its mod $2$ and mod $d_E$ components must be compatible with this common quadratic subfield. This phenomenon is called a Serre entanglement.
\end{remark}

For a Serre curve $E/\Q$, the adelic level $m_E$ and the mod $m$ Galois representations are well known.

\begin{proposition}\label{sizeofmE}
    Let $E/\Q$ be a Serre curve with discriminant $\Delta_E$. Let $\Delta'_E$ denote the squarefree part of $\Delta_E$. Then
    $$m_E = \begin{cases}
        2|\Delta'_E| & \text{ if } \Delta'_E \equiv 1 \pmod 4, \\
        4|\Delta'_E| & \text{ otherwise}.
    \end{cases}$$
\end{proposition}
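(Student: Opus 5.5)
We will show that $M_E=\operatorname{lcm}(2,d_E)$ is the adelic level. Since $d_E=|\Delta'_E|$ when $\Delta'_E\equiv 1 \pmod 4$, and $d_E=4|\Delta'_E|$ otherwise, we get $M_E=2|\Delta'_E|$ in the first case (an odd $|\Delta'_E|$ forces the factor $2$) and $M_E=4|\Delta'_E|$ in the second. So it suffices to prove $m_E=M_E$.

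\emph{Step 1: $m_E\mid M_E$.} By definition $G_E=H_E=\pi^{-1}(H_E(M_E))$, so $M_E$ satisfies \eqref{soitcor}. Hence every multiple of $m_E$, and more usefully the minimal level, divides $M_E$. Indeed, if $G_E$ is the full preimage of its images at levels $a$ and $b$, it is the full preimage at level $\gcd(a,b)$. This holds because the kernel of reduction mod $\gcd(a,b)$ is generated by the kernels mod $a$ and mod $b$; on $\GL_2(\widehat{\Z})$ this follows componentwise from the Chinese remainder theorem at each prime.

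\emph{Step 2: no proper divisor $M'$ of $M_E$ works.} Suppose $G_E=\pi_{M'}^{-1}(G_E(M'))$ for some proper divisor $M'\mid M_E$. Then $G_E$ contains the kernel of reduction $\GL_2(\widehat{\Z})\to\GL_2(\Z/M'\Z)$. It is enough to exhibit an element $g\in\GL_2(\widehat{\Z})$ with $g\equiv I \pmod{M'}$ and $g\notin H_E$, i.e.\ with $\epsilon(g \bmod 2)\neq \alpha(\det g \bmod d_E)$. Write $M'=M_E/\ell$ for a prime $\ell\mid M_E$, and distinguish cases.

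\begin{itemize}
\item \emph{$\ell$ odd, $\ell\mid\Delta'_E$.} Take $g$ congruent to $I$ mod $2$ and mod every prime power except at $\ell$, and there take $g=\mathrm{diag}(u,1)$ with $u\equiv 1 \pmod{\ell^{v_\ell(M')}}$. Here $v_\ell(M_E)=1$, so $v_\ell(M')=0$ and $u$ is an arbitrary unit mod $\ell$. Since $\alpha$ has conductor exactly $d_E$, its $\ell$-component is the nontrivial Legendre character. Choosing $u$ a nonresidue gives $\alpha(\det g)=-1$ while $\epsilon=1$.
\item \emph{$\ell=2$.} Here one uses that the $2$-part of $\alpha$ is nontrivial of conductor exactly $2^{v_2(d_E)}$, which is $4$ or $8$. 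If $v_2(M_E)=1$ (the case $\Delta'_E\equiv 1 \bmod 4$), then $M'$ is odd, so we may take $g\equiv\begin{pmatrix}0&1\\1&0\end{pmatrix}\pmod 2$, which has $\epsilon=-1$. Keeping $g\equiv I$ at odd primes gives $\alpha(\det g)=1$, a mismatch. If $v_2(M_E)\in\{2,3\}$, take $g\equiv I \pmod{2^{v_2(M_E)-1}}$ with $\det g$ chosen in the $2$-adic units so that the $2$-part of $\alpha$ is $-1$. This is possible because a primitive character mod $4$ or $8$ is nontrivial on $1+2^{v-1}\Z_2$. Since $g\equiv I\pmod 2$, we have $\epsilon(g)=1$.
\end{itemize}

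In each case $g\notin G_E$ although $g$ lies in the kernel mod $M'$, a contradiction. Hence $m_E=M_E$.

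The main obstacle is the $2$-adic case: we must ensure that primitivity of $\alpha$ at $2$ gives a nontrivial value on $1+2^{v-1}\Z_2$, and that the constructed $g$ is compatible with reduction mod $2$ when $v_2(M')\geq 1$, which forces $\epsilon(g)=1$. Both follow from the explicit characters mod $4$ and mod $8$ attached to $\Q(\sqrt{-1})$, $\Q(\sqrt{\pm 2})$.
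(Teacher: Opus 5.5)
Your proof is correct. It differs from the paper only in that the paper gives no argument here and simply cites Jones [pp.~696--697]; you give a self-contained proof that $M_E=\operatorname{lcm}(2,d_E)$ is exactly the adelic level.

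\textbf{Step 1.} This is the standard fact that the set of admissible levels is closed under $\gcd$, since $K_{\gcd(a,b)}=K_aK_b$ componentwise. It is also closed under passing to multiples. Together these give $m_E\mid M_E$. Your sentence ``every multiple of $m_E$ \dots divides $M_E$'' is garbled and should be replaced by this statement. The conclusion you actually use is right.

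\textbf{Step 2.} In effect you compute the conductor of the quadratic character $g\mapsto \epsilon(g \bmod 2)\,\alpha(\det g \bmod d_E)$, whose kernel is $H_E$.
\begin{itemize}
\item At odd $\ell\mid\Delta'_E$, and at $\ell=2$ when $4\mid d_E$, you use primitivity of the local components of $\alpha$. This follows from the minimality of $d_E$.
\item When $d_E$ is odd, you instead use the nontriviality of $\epsilon$ on a matrix of order $2$ in $\GL_2(\Z/2\Z)$.
\end{itemize}
Restricting to $M'=M_E/\ell$ needs one line of justification, which you leave implicit: any element $g\equiv I \pmod{M_E/\ell}$ is also $\equiv I$ modulo every divisor of $M_E/\ell$. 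So one $g\notin H_E$ rules out all proper divisors lying under $M_E/\ell$ at once.

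\textbf{Comparison.} The citation keeps the paper short. Your version shows explicitly why no smaller level works: the character cutting out $H_E$ detects every prime power exactly at the level $M_E$.
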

\begin{proof}
    See \cite[pp.696-697]{MR2534114}.
\end{proof}
\begin{lemma}\label{atleast6}
    Let $E/\Q$ be a Serre curve. Then $m_E \geq 6$.
\end{lemma}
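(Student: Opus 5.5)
Since \Cref{sizeofmE} gives an explicit formula for $m_E$ in terms of the squarefree part $\Delta'_E$, the plan is to reduce the claim $m_E \geq 6$ to a statement about $\Delta'_E$ alone. By the formula, $m_E = 2|\Delta'_E|$ when $\Delta'_E \equiv 1 \pmod 4$ and $m_E = 4|\Delta'_E|$ otherwise. In the second case $m_E \geq 6$ unless $|\Delta'_E| = 1$, and in the first case it holds unless $|\Delta'_E| \in \{1,2\}$; but $|\Delta'_E| = 2$ forces $\Delta'_E = \pm 2 \not\equiv 1 \pmod 4$. So the only possible exceptions are $\Delta'_E = 1$, giving $m_E = 2$, and $\Delta'_E = -1$, giving $m_E = 4$ since $-1 \equiv 3 \pmod 4$.

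The case $\Delta'_E = 1$ means $\Delta_E \in (\Q^\times)^2$. For a Serre curve the adelic index is $[\GL_2(\widehat{\Z}) : H_E] = 2$. The discussion before the definition of Serre curves, citing \cite[Proposition 2.14]{MR4732685}, shows that a square discriminant forces index at least $12$. Hence this case cannot occur for a Serre curve.

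The case $\Delta'_E = -1$ is the main obstacle. Here $\Q(\sqrt{\Delta_E}) = \Q(i) = \Q(\zeta_4)$, so $d_E = 4$ and $M_E = 4$. The plan is to show that the index-$2$ subgroup $H_E(4)$ defined in \eqref{HeMe} is then too large to be the full image, i.e.\ that the true image $G_E$ has index strictly greater than $2$. The natural first attempt is to show directly that $E$ cannot be a Serre curve, by exhibiting a further constraint on $G_E(4)$ or on the $2$-adic image beyond $G_E(4) \subseteq H_E(4)$.

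I expect this to fail for the following reason. If $G_E(4) = H_E(4)$ is an index-$2$ subgroup of $\GL_2(\Z/4\Z)$, then the $2$-adic image is a proper subgroup of $\GL_2(\Z_2)$ of index $2$. For Serre curves, Jones \cite{MR2534114} notes that $G_E$ is then determined by the mod-$4$ image alone, so this configuration is not ruled out simply by its level structure. My fallback is therefore to rely on the literature: the condition $\Delta'_E = -1$ means $\Q(\sqrt{\Delta_E}) = \Q(i)$, and it is known (Dokchitser--Dokchitser; Rouse--Zureick-Brown's $2$-adic classification) that this forces the $2$-adic image into a subgroup $X_{2a}$-type of index $2$ in $\GL_2(\Z_2)$. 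The remaining step is to check that such a curve carries an additional entanglement, making its adelic index at least $4$. I would try to obtain this from the Rouse--Zureick-Brown list and compare with Jones' description on pp.\ 696--697 of \cite{MR2534114}.

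Concretely, the argument I want is this: Jones' characterization states $G_E = H_E$ only when $\Delta'_E \neq -1$, and the level $m_E = \operatorname{lcm}(2, d_E)$ together with the minimality of the level excludes $m_E = 4$. If Jones' list already excludes $\Delta'_E = -1$ for Serre curves, the lemma follows immediately.
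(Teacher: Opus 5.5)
\textbf{There is a genuine gap: the case $\Delta'_E=-1$ is never proved.} Your reduction is correct. By Proposition~\ref{sizeofmE}, $m_E<6$ forces $\Delta'_E\in\{1,-1\}$, and $\Delta'_E=1$ is excluded by the index-$\geq 12$ fact quoted before the definition of Serre curves.

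The remaining case $\Delta'_E=-1$ is the real content of the lemma; the paper itself simply cites \cite[Lemma 12]{hamakiotes2026densitycoprimereductionselliptic}. Your closing argument for it is only conditional (``if Jones' list already excludes\ldots''). The claim that minimality of the level rules out $m_E=4$ is false. If $\Delta'_E=-1$ then $d_E=M_E=4$. If $G_E$ were equal to $H_E$, it would be the full preimage of $H_E(4)$, which does not come from level $2$, so the minimal level would be exactly $4$. Placing the $2$-adic image in an index-$2$ group via Rouse--Zureick-Brown only restates $G_E\subseteq H_E$. What must be shown is that the group $H_E$ itself can never be an adelic image. Your worry that no such direct obstruction exists is unfounded: it is a group-theoretic obstruction, not a level-structure one.

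\textbf{The missing idea is Kronecker--Weber applied to commutators.} The maximal abelian subextension of $\Q(E[\infty])/\Q$ lies in $\Q(\zeta_\infty)$. That field is contained in $\Q(E[\infty])$ and is the fixed field of $G_E\cap\mathrm{SL}_2(\widehat{\Z})$. Hence $G_E\cap\mathrm{SL}_2(\widehat{\Z})=\overline{[G_E,G_E]}$ for every $E/\Q$, and a Serre curve requires $\overline{[H_E,H_E]}=H_E\cap\mathrm{SL}_2(\widehat{\Z})$.

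For $\Delta'_E=-1$, $H_E$ is the preimage of $H(4)=\{M\in\GL_2(\Z/4\Z):\epsilon(M)=\chi_4(M)\}$. Write $H(4)=N\rtimes S$, where:
\begin{itemize}
\item $N=\{I+2X:\Tr X\equiv 0\pmod 2\}$ is elementary abelian of order $8$;
\item $S=\langle\sigma,\tau\rangle\cong S_3$ with $\sigma=\left(\begin{smallmatrix}0&-1\\1&-1\end{smallmatrix}\right)$ and $\tau=\left(\begin{smallmatrix}0&1\\1&0\end{smallmatrix}\right)$.
\end{itemize}
Both generators satisfy $\epsilon=\chi_4$: $\sigma$ is a $3$-cycle mod $2$ with $\det\sigma=1$, and $\tau$ is a transposition with $\det\tau=-1$.

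Computing $gXg^{-1}-X$ for $g\in S$ gives
\[
[S,N]=\{I+2X: X\in\{0,\,I+e_{12},\,I+e_{21},\,e_{12}+e_{21}\}\},
\]
where $e_{ij}$ are matrix units. Thus $[H(4),H(4)]=[S,N]\rtimes\langle\sigma\rangle$ has order $12$ and does not contain $-I=I+2I$. By contrast, $H(4)\cap\mathrm{SL}_2(\Z/4\Z)$ has order $24$ and contains $-I$. Consequently $\overline{[H_E,H_E]}$ is a proper subgroup of $H_E\cap\mathrm{SL}_2(\widehat{\Z})$, so
\[
[\GL_2(\widehat{\Z}):G_E]=[\mathrm{SL}_2(\widehat{\Z}):\overline{[G_E,G_E]}]\geq 4
\]
whenever $\Delta'_E=-1$. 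So no such curve is a Serre curve. Field-theoretically, this reflects $\Q(E[4])\supseteq\Q(i,\sqrt[4]{\Delta_E})=\Q(i,\sqrt{2t})$ when $\Delta_E=-t^2$, an obstruction beyond $\Q(\sqrt{\Delta_E})=\Q(i)$.
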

\begin{proof}
    See \cite[Lemma 12]{hamakiotes2026densitycoprimereductionselliptic}.
\end{proof}
Recall the definition of $H_E$ in \eqref{He}. For a positive integer $m$, let $H_E(m)$ denote the image of $H_E$ under the natural projection $\GL_2(\widehat{\Z})\to \GL_2(\Z/m\Z)$.
\begin{proposition}\label{Serremodmimage}
    Let $E/\Q$ be a Serre curve of adelic level $m_E$. For a positive integer $m$, we have
    $$G_E(m) = \begin{cases}
        \GL_2(\Z/m\Z) & \text{ if } m_E \nmid m, \\
        H_E(m) & \text{ if } m_E \mid m.
    \end{cases}$$
    In particular, $|G_E(m)| = |\GL_2(\Z/m\Z)|$ if $m_E \nmid m$, and $\frac{1}{2}|\GL_2(\Z/m\Z)|$ otherwise.
\end{proposition}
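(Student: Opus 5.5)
The plan is to reduce everything to the definition $G_E = H_E = \pi^{-1}(H_E(M_E))$ together with the fact that $m_E$ is the smallest level at which $G_E$ is defined. Since $G_E$ is the full preimage of $H_E(m_E)$ under reduction modulo $m_E$, the image $G_E(m)$ for an arbitrary $m$ is governed by $G_E(\gcd(m,m_E))$. Precisely, the reduction $G_E(\operatorname{lcm}(m,m_E))$ is the full preimage of $G_E(m_E)$. Projecting to level $m$ then shows that $G_E(m)$ is the full preimage in $\GL_2(\Z/m\Z)$ of $G_E(\gcd(m,m_E))$. The last step uses that $\GL_2(\Z/\operatorname{lcm}\Z) \to \GL_2(\Z/m\Z)\times_{\GL_2(\Z/\gcd\Z)} \GL_2(\Z/m_E\Z)$ is surjective, which follows from the Chinese remainder theorem applied prime by prime.

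\emph{Case $m_E \mid m$.} Here $\gcd(m,m_E)=m_E$, so $G_E(m)$ is the full preimage of $H_E(m_E)$, which by definition is $H_E(m)$. Because $H_E(m_E)$ has index $2$ in $\GL_2(\Z/m_E\Z)$ and reduction is surjective, $H_E(m)$ has index $2$ in $\GL_2(\Z/m\Z)$. This gives the size $\tfrac12|\GL_2(\Z/m\Z)|$. Here $m_E = M_E$ by Proposition \ref{sizeofmE}, since $M_E = \operatorname{lcm}(2,d_E)$ equals $2|\Delta'_E|$ or $4|\Delta'_E|$ in the respective cases.

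\emph{Case $m_E \nmid m$.} Set $d=\gcd(m,m_E)$, a proper divisor of $m_E$. It suffices to show $G_E(d)=\GL_2(\Z/d\Z)$. Since $G_E(m_E)$ has index $2$, its image $G_E(d)$ has index $1$ or $2$. If the index were $2$, then $\ker(\GL_2(\Z/m_E\Z)\to \GL_2(\Z/d\Z))$ would be contained in $G_E(m_E)$ by an order count. Then $G_E(m_E)$ would be the full preimage of $G_E(d)$, so $G_E$ would be defined at level $d<m_E$, contradicting the minimality of $m_E$. Hence the index is $1$.

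The main obstacle is justifying this order-count step carefully. If $G_E(d)$ has index $2$, then $|G_E(m_E)| = |G_E(d)|\cdot|\ker|$, which forces $G_E(m_E)$ to contain the whole kernel. That is valid because $G_E(m_E)\cap\ker$ has size $|G_E(m_E)|/|G_E(d)| = |\ker|$. Another delicate point is the fibered-product surjectivity used in the first paragraph. I would handle it by decomposing into prime powers, where for each $\ell$ one of $\ell^{v_\ell(m)}$, $\ell^{v_\ell(m_E)}$ divides the other and the statement becomes trivial.
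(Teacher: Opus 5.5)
Your proof is correct. It differs from the paper mainly in that the paper gives no argument for this statement: it defers entirely to \cite[Proposition 2.4]{Lee_Mayle_Wang_2025}.

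Your route is self-contained and purely group-theoretic. It uses only three ingredients:
\begin{enumerate}
\item $G_E$ is the full preimage of $G_E(m_E)$;
\item the Chinese remainder identification $\GL_2(\Z/\operatorname{lcm}(m,m_E)\Z)\cong \GL_2(\Z/m\Z)\times_{\GL_2(\Z/d\Z)}\GL_2(\Z/m_E\Z)$, which is actually an isomorphism, prime by prime;
\item the minimality of $m_E$.
\end{enumerate}
Both steps you flagged as delicate are handled correctly. The order count $|G_E(m_E)\cap\ker| = |G_E(m_E)|/|G_E(d)|$ does force the kernel into $G_E(m_E)$ when $G_E(d)$ has index $2$. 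That then exhibits $d<m_E$ as a level, which is a contradiction.

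Consequently your argument proves the dichotomy for any open subgroup of $\GL_2(\widehat{\Z})$ of index $2$ (indeed of any prime index), and uses nothing about Serre entanglement beyond the index.

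Two small simplifications are available:
\begin{enumerate}
\item In the case $m_E\mid m$, the equality $G_E(m)=H_E(m)$ is tautological, since $H_E(m)$ is defined as the image of $H_E=G_E$. The only content there is the index.
\item You do not need Proposition \ref{sizeofmE} to identify $m_E$ with $M_E$. The relation $G_E=\pi_{m_E}^{-1}(G_E(m_E))$ already gives $[\GL_2(\Z/m_E\Z):G_E(m_E)]=[\GL_2(\widehat{\Z}):H_E]=2$.
\end{enumerate}

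The paper's citation buys brevity. Your version buys transparency, and it shows that the split between $m_E\mid m$ and $m_E\nmid m$ is forced purely by the adelic index being prime.
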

\begin{proof}
    See \cite[Proposition 2.4]{Lee_Mayle_Wang_2025}.
\end{proof}
Let $E/\Q$ be a Serre curve of adelic level $m_E$. For a positive integer $m$ satisfying $m_E \mid m \mid m_E^\infty$, Jones \cite[Section 4]{MR2534114} described $H_E(m)$ as the kernel of a quadratic character on $\GL_2(\Z/m\Z)$. We recall this description below, since it will be used to compute the $m$-divisibility density $C_E^{m\divi}$ for Serre curves.

Let $A \in \GL_2(\Z/m\Z)$, and let $d$ be a divisor of $m$. We denote the reduction of $A$ modulo $d$ by $A_d$. Recall that, under the identification $\Aut(E[2]) \simeq S_3$, we defined the sign character $\epsilon : G_E(2) \to \{\pm 1\}$. We extend the domain of $\epsilon$ to $\GL_2(\mathbb Z/2^\alpha\mathbb Z)$ by setting $\epsilon(A) \coloneqq \epsilon(A_2)$ for $A\in \GL_2(\mathbb Z/2^\alpha\mathbb Z)$.

For $\alpha \geq 2$, define $\chi_4 : \GL_2(\Z/2^\alpha \Z) \to \{\pm1\}$ by
\begin{equation}\label{chi4}
    \chi_4(A) \coloneqq \begin{cases}
    1 & \text{ if } \det A_4 \equiv 1 \pmod 4, \\
    -1 & \text{ if } \det A_4 \equiv -1 \pmod 4.
\end{cases}
\end{equation}
For $\alpha \geq 3$, define $\chi_8 : \GL_2(\Z/2^\alpha \Z) \to \{\pm 1\}$ by
\begin{equation}\label{chi8}
    \chi_8(A) \coloneqq \begin{cases}
        1 & \text{ if } \det A_8 \equiv \pm 1 \pmod 8, \\
        -1 & \text{ if } \det A_8 \equiv \pm 3 \pmod 8.
    \end{cases}
\end{equation}
Now, we define a quadratic character $\psi_m:\GL_2(\mathbb Z/m\mathbb Z)\to \{\pm1\}$ by
$$\psi_m(A)=\prod_{\ell^\alpha\parallel m}\psi_{\ell^\alpha}(A_{\ell^\alpha}),$$
where
\begin{equation}\label{localcharacter}
    \psi_{\ell^\alpha} (A) \coloneqq \begin{cases}
    \left(\frac{\det A_\ell}{\ell}\right)& \text{ if } \ell \text{ is an odd prime,}\\
    \epsilon(A) & \text{ if } \ell =2 \text{ and } \Delta'_E \equiv 1 \pmod 4,\\
    \chi_4(A)\epsilon(A) &\text{ if } \ell = 2 \text{ and } \Delta'_E \equiv 3 \pmod 4, \\
    \chi_8(A) \epsilon(A) & \text{ if }\ell = 2\text{ and } \Delta'_E \equiv 2\pmod 8, \\
    \chi_8(A) \chi_4(A) \epsilon(A) & \text{ if } \ell = 2 \text{ and } \Delta'_E \equiv 6 \pmod 8,
\end{cases}
\end{equation}
where $\left(\frac{\cdot}{\ell}\right)$ denotes the Legendre symbol. In case $m_E \mid m \mid m_E^\infty$, Jones proved that
\begin{equation}\label{kerofpsim}
    H_E(m) = \ker \psi_m.
\end{equation}

\subsection{Primes of $m$-divisibility reduction for $E$}\label{mdivisibilityprime}
In this section, we interpret the condition for the $m$-divisibility prime for $E$ in terms of Galois representation. Also, we evaluate the density, given that $G_E(m)$ is the full group.

Let $p \nmid mN_E$ be a rational prime. By \eqref{wellknownequation}, we see that $p$ is a prime of $m$-divisibility prime for $E$ if and only if
$$\det(I-\rho_{E,m}(\Frob_p)) \equiv p+1-a_p(E) \equiv 0 \pmod m.$$
We define
\begin{equation}\label{Psim}
    \Psi(m) \coloneqq \{M \in \GL_2(\Z/m\Z) : \det(I-M) \equiv 0 \pmod m\}.
\end{equation}
One can check that $G_E(m) \cap \Psi(m)$ is closed under conjugation. Thus, by the Chebotarev density theorem, we have
\begin{equation}\label{CmdivE}
    C_E^{m\divi} = \frac{|G_E(m) \cap \Psi(m)|}{|G_E(m)|}.
\end{equation}
In the following remarks, we discuss the upper and lower bounds of $C_E^{m\divi}$.
\begin{remark}\label{upperbound}
    Suppose that $E(\Q)$ contains a torsion point of order $m$. For every prime $p$ of good reduction, the mod $p$ reduction induces an injection $E(\Q)_{\text{tors}} \hookrightarrow E_p(\F_p)$ (see \cite[VII. Proposition 3.1]{MR2514094}). Thus $m \mid \#E_p(\F_p)$ for every good prime $p$ and $C_E^{m\divi}=1$. By Mazur's classification theorem \cite{MR482230}, this can occur only for $m \leq 10$, $m = 12$, or $m =16$.
\end{remark}

\begin{remark}\label{lowerbound}
    Let $p \nmid mN_E$ and suppose that $\rho_{E,m}(\Frob_p) = I$. This is equivalent to saying that $p$ splits completely in $\Q(E[m])$. Hence, the Frobenius endomorphism $\pi_p$ fixes every point of $E[m]$. Therefore, for such primes, $E[m] \subseteq E_p(\F_p)$, and hence $|E_p(\F_p)|$ is divisible by $m^2$. In particular, $p$ is an $m$-divisibility prime.

    Since the identity element forms a conjugacy class by itself, the Chebotarev density theorem guarantees that
    $$C_E^{m\divi} \geq \frac{1}{|G_E(m)|} \geq \frac{1}{|\GL_2(\Z/m\Z)|}.$$
    Therefore, for any positive integer $m$, the density of the $m$-divisibility primes is non-zero.
\end{remark}

\begin{lemma}\label{mixedgroupdecomposition}
    Let $E/\Q$ be a non-CM elliptic curve of adelic level $m_E$ and $m$ be a positive integer. Write $m = m_1m_2$ so that $m_1 = \gcd(m,m_E^\infty)$ and $\gcd(m_2,m_E) = 1$. Then we have
    $$\frac{|G_E(m) \cap \Psi(m)|}{|G_E(m)|} = \frac{|G_E(m_1) \cap \Psi(m_1)|}{|G_E(m_1)|} \prod_{\ell^\alpha \parallel m_2} \frac{|\Psi(\ell^\alpha)|}{|\GL_2(\Z/\ell^\alpha\Z)|}.$$
\end{lemma}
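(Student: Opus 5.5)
The plan is to combine the product decomposition of Lemma \ref{CRTforEllipticCurve} with the Chinese remainder theorem for the condition $\det(I-M)\equiv 0 \pmod m$. Since $\gcd(m_1,m_2)=1$, the reduction map $\Z/m\Z \to \Z/m_1\Z \times \Z/m_2\Z$ is a ring isomorphism. It therefore induces a group isomorphism
$$\GL_2(\Z/m\Z) \simeq \GL_2(\Z/m_1\Z)\times \GL_2(\Z/m_2\Z), \qquad M \mapsto (M_{m_1},M_{m_2}),$$
which commutes with taking $\det(I-\cdot)$. Hence $M\in\Psi(m)$ if and only if $M_{m_1}\in\Psi(m_1)$ and $M_{m_2}\in\Psi(m_2)$; that is, $\Psi(m)$ corresponds to $\Psi(m_1)\times\Psi(m_2)$.

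Next I apply Lemma \ref{CRTforEllipticCurve}, which gives $G_E(m)\simeq G_E(m_1)\times \GL_2(\Z/m_2\Z)$. I need to check that this isomorphism is the restriction of the CRT map above, i.e., that $G_E(m)$ equals the full preimage of $G_E(m_1)\times\GL_2(\Z/m_2\Z)$, and not merely a group abstractly isomorphic to it. This holds because the image of $G_E(m)$ under the CRT map is contained in $G_E(m_1)\times \GL_2(\Z/m_2\Z)$, and the two sides have the same cardinality. Intersecting, $G_E(m)\cap\Psi(m)$ corresponds to $(G_E(m_1)\cap\Psi(m_1))\times \Psi(m_2)$. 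Dividing cardinalities then yields
$$\frac{|G_E(m)\cap\Psi(m)|}{|G_E(m)|}=\frac{|G_E(m_1)\cap\Psi(m_1)|}{|G_E(m_1)|}\cdot\frac{|\Psi(m_2)|}{|\GL_2(\Z/m_2\Z)|}.$$

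Finally, I factor $m_2=\prod \ell^\alpha$ into pairwise coprime prime powers and apply the same CRT argument to $\GL_2(\Z/m_2\Z)$ and $\Psi(m_2)$. This gives $\Psi(m_2)\simeq\prod_{\ell^\alpha\parallel m_2}\Psi(\ell^\alpha)$ and $\GL_2(\Z/m_2\Z)\simeq\prod\GL_2(\Z/\ell^\alpha\Z)$, which produces the stated product. The only step needing any care is the compatibility check in the second paragraph, and it reduces to the counting argument given there; the rest is routine.
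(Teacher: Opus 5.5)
Your proposal is correct and follows essentially the same route as the paper: invoke Lemma~\ref{CRTforEllipticCurve}, observe that the condition $\det(I-M)\equiv 0 \pmod{m_1m_2}$ splits into the corresponding conditions modulo $m_1$ and $m_2$, and then factor $\Psi(m_2)$ and $\GL_2(\Z/m_2\Z)$ over the prime powers $\ell^\alpha \parallel m_2$. Your cardinality check, which shows that the isomorphism is the restriction of the natural CRT map rather than just an abstract isomorphism, is a worthwhile detail that the paper leaves implicit.
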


\begin{proof}
By Proposition \ref{CRTforEllipticCurve}, we have an isomorphism $\iota : G_E(m) \to G_E(m_1) \times \GL_2(\Z/m_2\Z)$. Under this isomorphism, since $m_1$ and $m_2$ are coprime, the condition defining $\Psi(m)$ splits into the corresponding conditions modulo $m_1$ and $m_2$. Indeed, if $\iota(M) = (M_1,M_2)$, then 
$$\det(I-M) \equiv 0 \pmod {m_1m_2} \iff \det(I-M_1) \equiv 0 \pmod {m_1} \quad \text{ and } \quad \det(I-M_2) \equiv 0 \pmod {m_2},$$
and hence $\iota$ induces a one-to-one correspondence between
$$\iota : G_E(m) \cap \Psi(m) \longleftrightarrow \left(G_E(m_1) \cap \Psi(m_1)\right) \times \Psi(m_2).$$
Finally, one can check that the isomorphism induces an one-to-one correspondence between
$$\Psi(m_2) \longleftrightarrow \prod_{\ell^\alpha \parallel m_2} \Psi(\ell^\alpha).$$
This completes the proof.
\end{proof}

\begin{lemma}\label{FullGroupComputation}
    Let $\ell$ be a prime and $\alpha \geq 1$ be an integer. Then
    $$\frac{|\Psi(\ell^\alpha)|}{|\GL_2(\Z/\ell^\alpha\Z)|} =\frac{1}{\varphi(\ell^\alpha)}
        \left(1-\frac{1}{\varphi(\ell^\alpha)(\ell+1)}\right).$$
\end{lemma}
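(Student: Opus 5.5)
We will count $\Psi(\ell^\alpha)$ by passing to the additive variable $N \coloneqq I - M \in M_2(\Z/\ell^\alpha\Z)$. For $2\times 2$ matrices, $\det(I-N) = 1 - \Tr N + \det N$. Hence $M = I-N$ lies in $\Psi(\ell^\alpha)$ exactly when
$$\det N \equiv 0 \pmod{\ell^\alpha} \quad\text{and}\quad \Tr N \not\equiv 1 \pmod \ell.$$
Here the second condition comes from invertibility of $M$: once $\det N=0$, we have $\det M = 1-\Tr N$. So $|\Psi(\ell^\alpha)|$ is the number of singular $N$ whose trace avoids the residue class $1 \bmod \ell$. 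We split these $N$ according to whether $\Tr N$ is a unit or lies in $\ell\Z/\ell^\alpha\Z$.

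\textbf{Unit trace.} Suppose $\Tr N = t \in (\Z/\ell^\alpha\Z)^\times$ and $\det N = 0$. The characteristic polynomial $X(X-t)$ has roots that are distinct modulo $\ell$. Hensel's lemma then shows that $N$ is $\GL_2(\Z/\ell^\alpha\Z)$-conjugate to $\mathrm{diag}(0,t)$. The centralizer of this matrix is the diagonal torus, of order $\varphi(\ell^\alpha)^2$. So each unit $t$ contributes exactly $|\GL_2(\Z/\ell^\alpha\Z)|/\varphi(\ell^\alpha)^2$ matrices. We need all unit $t$ with $t\not\equiv 1 \pmod \ell$, which is $\ell^{\alpha-1}(\ell-2)$ values of $t$.

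\textbf{Non-unit trace.} For $\Tr N \equiv 0 \pmod \ell$ we do not use conjugacy classes, since the non-semisimple cases are messy. Instead we take the total number
$$Z(\ell^\alpha) \coloneqq \#\{N \in M_2(\Z/\ell^\alpha\Z) : \det N \equiv 0 \pmod{\ell^\alpha}\}$$
and subtract the unit-trace contribution. By the previous step, that contribution is $\ell^{\alpha-1}(\ell-1)\,|\GL_2(\Z/\ell^\alpha\Z)|/\varphi(\ell^\alpha)^2$, now counting all units $t$. To compute $Z(\ell^\alpha)$, stratify by $j = \min(v_\ell(a), v_\ell(b))$, where $(a,b)$ is the first row of $N$ and $0\le j\le \alpha$.

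For $j<\alpha$, write $(a,b)=\ell^j(a',b')$ with $(a',b')$ primitive modulo $\ell^{\alpha-j}$. The condition $ad-bc\equiv 0 \pmod{\ell^\alpha}$ becomes the linear congruence $a'd-b'c \equiv 0 \pmod{\ell^{\alpha-j}}$. Since $(a',b')$ is primitive, this congruence has exactly $\ell^{2\alpha}/\ell^{\alpha-j}=\ell^{\alpha+j}$ solutions $(c,d)$. The number of first rows with exact valuation $j$ is $\ell^{2(\alpha-j)}(1-\ell^{-2})$. The stratum $j=\alpha$ (zero first row) contributes $\ell^{2\alpha}$. Summing the geometric series gives a closed form for $Z(\ell^\alpha)$, which we expect to be $\ell^{3\alpha-1}(\ell+1) - \ell^{2\alpha-1}$.

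Finally, we add the two contributions: the non-unit-trace count $Z(\ell^\alpha)$ minus the full unit-trace count, plus the unit-trace count restricted to $t\not\equiv 1 \pmod \ell$. We then divide by
$$|\GL_2(\Z/\ell^\alpha\Z)| = \ell^{4(\alpha-1)}\,\ell(\ell-1)^2(\ell+1),$$
and simplify to $\frac{1}{\varphi(\ell^\alpha)}\bigl(1-\frac{1}{\varphi(\ell^\alpha)(\ell+1)}\bigr)$. As a sanity check, for $\alpha=1$ this should agree with the classical count $\ell^3-2\ell$ of elements of $\GL_2(\F_\ell)$ having eigenvalue $1$.

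The main obstacle is the non-unit-trace part: matrices with $\Tr N\equiv 0 \pmod \ell$ include nilpotent-mod-$\ell$ and scalar-mod-$\ell$ lifts with complicated centralizers. The complement trick above avoids them entirely, so the remaining risk is the bookkeeping in computing $Z(\ell^\alpha)$ and in the final algebraic simplification.
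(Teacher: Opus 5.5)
Your proof is correct, and it takes a genuinely different route from the paper's. The paper passes to $\GL_2(\Z_\ell)$, reads the proportion as a Haar measure, and imports Lombardo--Perucca's measures $\mu_{a,b}$ of matrices whose $1$-eigenspace is $\Z/\ell^a\Z\times\Z/\ell^{a+b}\Z$, together with $v_\ell(\det(I-A))=2a+b$. It then sums geometric series split by the parity of $\alpha$. You instead give a self-contained finite count. With $N=I-M$, invertibility of $M$ becomes $\Tr N\not\equiv 1\pmod\ell$. So the excluded singular $N$ all have unit trace and lie in split semisimple classes of size $|\GL_2(\Z/\ell^\alpha\Z)|/\varphi(\ell^\alpha)^2=(\ell+1)\ell^{2\alpha-1}$. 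For odd $\ell$ this matches Lemma \ref{Nalphad+1d} in the case $r<\alpha/2$. The conjugacy to $\mathrm{diag}(0,t)$ is cleanest via the idempotent $N/t$, since $N^2=tN$ by Cayley--Hamilton.

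The steps you left as expectations do close. First, $Z(\ell^\alpha)=(1-\ell^{-2})\sum_{j=0}^{\alpha-1}\ell^{3\alpha-j}+\ell^{2\alpha}=(\ell+1)\ell^{3\alpha-1}-\ell^{2\alpha-1}$. Hence $|\Psi(\ell^\alpha)|=Z(\ell^\alpha)-\ell^{\alpha-1}(\ell+1)\ell^{2\alpha-1}=(\ell^2-1)\ell^{3\alpha-2}-\ell^{2\alpha-1}$. Dividing by $\ell^{4\alpha-3}(\ell-1)^2(\ell+1)$ gives exactly $\frac{1}{\varphi(\ell^\alpha)}-\frac{1}{\varphi(\ell^\alpha)^2(\ell+1)}$.

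Your approach needs no external input and handles $\ell=2$ uniformly; there the set of unit $t\not\equiv 1$ is empty. It also avoids the parity split and produces the exact cardinality of $\Psi(\ell^\alpha)$. The paper's approach costs a citation of Lombardo--Perucca, but in exchange it comes with the full distribution of the fixed-point module $\ker(I-A)$. That is the finer information relevant to questions like the one in Remark \ref{JonesRouse}.
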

Before proceeding with the proof, we briefly explain the strategy. Rather than counting the elements of $\Psi(\ell^\alpha)$ directly, we lift the problem to $\GL_2(\Z_\ell)$ and compute the Haar measure of the corresponding subset. The desired proportion agrees with the Haar measure of its full preimage in $\GL_2(\Z_\ell)$. We then use the results of Lombardo--Perucca \cite{MR3690236} on the distribution of the $1$-eigenspace of matrices in $\GL_2(\Z_\ell)$.

\begin{proof}[Proof of Lemma \ref{FullGroupComputation}]
    Let $G = \GL_2(\Z_\ell)$ and $G(\ell^\alpha) = \GL_2(\Z/\ell^\alpha\Z)$. We denote the natural projection $G \to G(\ell^\alpha)$ by $\pi$, and let $\mu$ be the normalized Haar measure on $G$. Each fiber of $\pi$ has measure $1/|G(\ell^\alpha)|$, and hence
    $$\frac{|\Psi(\ell^\alpha)|}{|G(\ell^\alpha)|} = \mu ( \pi^{-1}(\Psi(\ell^\alpha))).$$
    Additionally, we define
    $$V_\ell \coloneqq \varinjlim_n(\Z/\ell^n\Z)^2.$$
    Following the notation of \cite[Section 3.2]{MR3690236}, we view each
    element of $G$ as an automorphism of $V_\ell$. For $A\in G$, its $1$-eigenspace is
    $$\ker(I-A\mid V_\ell) = \{v\in V_\ell:Av=v\}.$$
    For $a,b\geq 0$, we set
    $$\mathcal M_{a,b} \coloneqq \{A\in G: \ker(I-A\mid V_\ell) \simeq \Z/\ell^a\Z\times \Z/\ell^{a+b}\Z\}.$$
    By \cite[Lemma 23]{MR3690236}, if $A\in\mathcal M_{a,b}$, then $v_\ell(\det(I-A)) = 2a+b$. Thus, for $A\in\mathcal M_{a,b}$, we have
    $$A \in \pi^{-1}(\Psi(\ell^\alpha)) \iff 2a+b \geq \alpha.$$
    Observe that the set of matrices with infinite $1$-eigenspace is contained in $\pi^{-1}(\Psi(\ell^\alpha))$ for any $\alpha \geq 1$. However, \cite[Lemma 25]{MR3690236} states that such set has measure zero, and hence
    $$\frac{|\Psi(\ell^\alpha)|}{|\GL_2(\Z/\ell^\alpha \Z)|} = \mu(\pi^{-1}(\Psi(\ell^\alpha))) =\sum_{2a+b\geq \alpha}\mu_{a,b},$$
    where $\mu_{a,b}=\mu(\mathcal M_{a,b})$. We put $c = \lceil \alpha/2\rceil$ and $d = \lfloor \alpha/2\rfloor$. Then we have
    $$\frac{|\Psi(\ell^\alpha)|}{|\GL_2(\Z/\ell^\alpha \Z)|} = \sum_{b\geq \alpha} \mu_{0,b} + \sum_{a \geq c} \mu_{a,0} + \sum_{\substack{a \geq 1 \\ b\geq 1 \\ 2a+b \geq \alpha}}\mu_{a,b}.$$
    Applying \cite[Theorem 2]{MR3690236}, we now evaluate each series. For the first sum, we have
    $$\sum_{b\geq \alpha} \mu_{0,b} = \frac{\ell^2-\ell-1}{\ell(\ell-1)} \sum_{b \geq \alpha} \ell^{-b} = \frac{\ell^2-\ell-1}{\ell^\alpha(\ell-1)^2}.$$
    For the second sum, we have
    $$\sum_{a\geq c} \mu_{a,0} = \sum_{a \geq c}  \ell^{-4a} = \frac{\ell^{4-4c}}{\ell^4-1}.$$
    For the last sum, suppose that $a,b \geq 1$. The condition $2a+b\geq \alpha$ is equivalent to $b \ge B(a) \coloneqq \max\{1,\alpha-2a\}$. If $\alpha \geq 2$, then
    $$B(a)= \begin{cases}
        \alpha-2a & \text{ if } 1 \leq a \leq d-1, \\
        1 & \text{ if } a \geq d
    \end{cases}.$$
Thus, we have
\begin{align*}
\sum_{\substack{a \geq 1 \\ b \geq 1 \\ 2a+b \geq \alpha}} \mu_{a,b}&= \sum_{a\geq 1}\sum_{b\geq B(a)}\mu_{a,b} = \sum_{a\geq 1}(\ell+1)\ell^{-4a-1}\sum_{b\geq B(a)}\ell^{-b} \\
&= \frac{\ell+1}{\ell-1}\left(\sum_{1\leq a\leq d-1}\ell^{-2a-\alpha}+\sum_{a\geq d}\ell^{-4a-1}\right) =\frac{1-\ell^{2-2d}}{\ell^\alpha(\ell-1)^2}+\frac{\ell^{3-4d}}{(\ell-1)^2(\ell^2+1)}.
\end{align*}
For $\alpha=1$, we have $(B(a)=1$ for all $a\geq 1$, and the same formula is obtained by direct summation.
Combining all these series, we obtain
$$\frac{|\Psi(\ell^\alpha)|}{|\GL_2(\Z/\ell^\alpha\Z)|} = \frac{\ell^2-\ell-1}{\ell^\alpha(\ell-1)^2} + \frac{\ell^{4-4c}}{\ell^4-1} +\frac{1-\ell^{2-2d}}{\ell^\alpha(\ell-1)^2} + \frac{\ell^{3-4d}}{(\ell-1)^2(\ell^2+1)}.$$
A direct calculation, according as $\alpha$ is even or odd, gives the desired formula.
\end{proof}
By Lemma \ref{mixedgroupdecomposition}, Lemma \ref{FullGroupComputation}, and the multiplicativity of $\varphi$, we deduce the following result.
\begin{proposition}\label{FullGroupCase}
    Let $E$ be an elliptic curve and $m$ be a positive integer. Suppose that $G_E(m) \simeq \GL_2(\Z/m\Z)$. Then we have
    $$C_E^{m\divi} = \frac{1}{\varphi(m)} \prod_{\ell^\alpha \parallel m}
        \left(1-\frac{1}{\varphi(\ell^\alpha)(\ell+1)}\right).$$
\end{proposition}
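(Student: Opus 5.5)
The argument is assembly rather than new computation. I start from the Chebotarev expression \eqref{CmdivE}, $C_E^{m\divi} = |G_E(m)\cap\Psi(m)|/|G_E(m)|$, and use the hypothesis $G_E(m)\simeq \GL_2(\Z/m\Z)$ to replace $G_E(m)$ by the full group. The quantity to compute is then $|\Psi(m)|/|\GL_2(\Z/m\Z)|$, which depends only on $m$ and not on $E$.

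Next I factor this ratio over the prime powers exactly dividing $m$. The mechanism is that of Lemma \ref{mixedgroupdecomposition}, applied with the whole of $m$ playing the role of the coprime part. By the Chinese remainder theorem, $\GL_2(\Z/m\Z)\simeq\prod_{\ell^\alpha\parallel m}\GL_2(\Z/\ell^\alpha\Z)$. Under this isomorphism, $\det(I-M)\equiv 0 \pmod m$ holds if and only if $\det(I-M_{\ell^\alpha})\equiv 0\pmod{\ell^\alpha}$ for every $\ell^\alpha\parallel m$, since the determinant commutes with reduction. Hence $\Psi(m)$ corresponds to $\prod\Psi(\ell^\alpha)$, and the ratio is the product of the local ratios $|\Psi(\ell^\alpha)|/|\GL_2(\Z/\ell^\alpha\Z)|$.

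I would argue this directly rather than cite Lemma \ref{mixedgroupdecomposition}. That lemma is phrased in terms of $m_E$ and $G_E$, whereas here the hypothesis only concerns level $m$, and the direct argument is no longer.

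Finally I substitute Lemma \ref{FullGroupComputation} for each local factor and use the multiplicativity of $\varphi$, namely $\prod_{\ell^\alpha\parallel m}\varphi(\ell^\alpha)^{-1}=\varphi(m)^{-1}$. This pulls out the factor $1/\varphi(m)$ and leaves exactly $\prod_{\ell^\alpha\parallel m}\bigl(1-\frac{1}{\varphi(\ell^\alpha)(\ell+1)}\bigr)$.

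I do not expect any step to be a real obstacle, since the local computation is already done in Lemma \ref{FullGroupComputation}. The only point needing care is that the statement's isomorphism $G_E(m)\simeq\GL_2(\Z/m\Z)$ must be read as equality of subgroups (which it is, as $G_E(m)\subseteq\GL_2(\Z/m\Z)$ and the two have the same finite cardinality), so that the intersection with $\Psi(m)$ is all of $\Psi(m)$.
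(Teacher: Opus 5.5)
Your proposal is correct and follows the paper's route: the Chebotarev formula \eqref{CmdivE} with $G_E(m)$ equal to the full group, the CRT factorization of $\Psi(m)$ over the prime powers exactly dividing $m$, Lemma \ref{FullGroupComputation} for each local factor, and multiplicativity of $\varphi$. Proving the CRT splitting directly instead of citing Lemma \ref{mixedgroupdecomposition} is a sensible choice, since that lemma is phrased for non-CM curves via $m_E$, while the proposition is stated for an arbitrary $E$.
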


\subsection{Average Density}\label{Average Density}
In this section, we explicitly calculate the average $m$-divisibility density $C^{m\divi}$ proposed by Banks and Shparlinski \cite{MR2570668} and show that it is equal to $C_E^{m\divi}$ given that $G_E(m) \simeq \GL_2(\Z/m\Z)$. First we revisit the notations introduced in their paper. Define a multiplicative function $\omega_q$ defined on prime powers as follows:
\begin{equation}\label{omegaq}
    \omega_q(\ell^j) \coloneqq \begin{cases}
    \dfrac{1}{\ell^{j-1}(\ell-1)} & \text{ if } q \not \equiv 1 \pmod {\ell^{\lceil j/2 \rceil}}, \\
    \\
    \dfrac{\ell^{\lfloor j/2\rfloor +1} + \ell^{\lfloor j/2 \rfloor} - 1}{\ell^{j+\lfloor j/2\rfloor -1}(\ell^2-1)} & \text{ if } q \equiv 1 \pmod {\ell^{\lceil j/2 \rceil}}.
\end{cases}
\end{equation}
For a positive integer $m$, we define
$$\overline{m} \coloneqq \prod_{\ell^j \parallel m} \ell^{\lceil j/2 \rceil}.$$
Then the average density $C^{m\divi}$ is given by
\begin{equation}\label{definitionofaveragedensity}
    C^{m\divi} = \frac{1}{\varphi(\overline{m})}\sum_{\substack{1 \leq q \leq \overline{m} \\ (q,\overline{m})=1}} \omega_q(m).
\end{equation}

\begin{remark}\label{HoweWork}
We briefly explain Howe's theorem and the idea behind the average density $C^{m\divi}$. Let $q$ be a prime power, and let $V(\F_q)$ be the set of $\F_q$-isomorphism classes of elliptic curves over $\F_q$. For a positive integer $m$, set
$$V(\F_q;m)\coloneqq \{E/\F_q : m \mid \#E(\F_q)\}/\simeq_{\F_q}.$$
For a subset $S\subseteq V(\F_q)$, define its weighted cardinality by
$$\#'S \coloneqq \sum_{[E]\in S} \frac{1}{\#\Aut_{\F_q}(E)}.$$
Howe \cite{MR1204781} proved that $\#'V(\F_q)=q$. Thus, it is natural to interpret $\#'V(\F_q;m)/q$ as the probability that a randomly chosen elliptic curve $E$ over $\F_q$ has $\#E(\F_q)$ divisible by $m$. He further proved that
$$\left|\frac{\#'V(\F_q;m)}{q} - \omega_q(m)\right| \ll \frac{m^{1+o(1)}}{\sqrt{q}},$$
with an effective bound. In addition, Howe observed that $\omega_q(m)>1/m$ for every integer $m\geq 2$. Thus, for sufficiently large $q$, the order of a random elliptic curve over $\F_q$ is more likely to be divisible by $m$ than a random integer is. This phenomenon is in line with Corollary \ref{maincorollary}.

Banks and Shparlinski \cite{MR2570668} used Howe's finite field estimate as the local input for an average result over the family $\mathcal{F}$. Roughly speaking, they proved that, after averaging over this family, the reductions of curves $E/\Q$ modulo primes $p$ are sufficiently well distributed among the relevant $\F_p$-isomorphism classes. This allows the average behavior of the condition $m\mid \#E_p(\F_p)$ to be described in terms of Howe's local quantities $\omega_q(\ell^j)$, averaged over the relevant residue classes of $q$.
\end{remark}
Now, we prove that the average density $C^{m\divi}$ is equal to $C^{m\divi}_E$ when $G_E(m) \simeq \GL_2(\Z/m\Z)$. For this purpose, we first prove that $C^{m\divi}$ is multiplicative, and those two densities coincide when at prime powers.
\begin{lemma}\label{multiplicativityofaveragedensity}
    Let $m = \ell^\alpha m'$ where $\gcd(\ell,m') = 1$. Then we have
    $$C^{m\divi} = C^{\ell^\alpha\divi} \cdot C^{m' \divi}.$$
\end{lemma}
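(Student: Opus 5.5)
The plan is to unwind the definition \eqref{definitionofaveragedensity} and apply the Chinese remainder theorem to the residues $q$ modulo $\overline{m}$.

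First, $\overline{\cdot}$ is multiplicative on coprime arguments by its definition. Since $\gcd(\ell,m')=1$, we get $\overline{m} = \ell^{\lceil \alpha/2\rceil}\cdot \overline{m'}$ with $\gcd(\ell^{\lceil\alpha/2\rceil},\overline{m'})=1$. Hence $\varphi(\overline{m}) = \varphi(\ell^{\lceil \alpha/2\rceil})\varphi(\overline{m'})$.

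Next, since $\omega_q$ is multiplicative in its argument, $\omega_q(m)=\omega_q(\ell^\alpha)\,\omega_q(m')$. The key observation is how these factors depend on $q$:
\begin{itemize}
\item By \eqref{omegaq}, $\omega_q(\ell^\alpha)$ depends only on whether $q\equiv 1 \pmod{\ell^{\lceil \alpha/2\rceil}}$, so it depends only on $q \bmod \ell^{\lceil\alpha/2\rceil}$.
\item Likewise, $\omega_q(m')=\prod_{r^j\parallel m'}\omega_q(r^j)$ depends only on $q$ modulo each $r^{\lceil j/2\rceil}$, hence only on $q \bmod \overline{m'}$.
\end{itemize}

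The Chinese remainder theorem gives a bijection between reduced residues $q \bmod \overline{m}$ and pairs $(q_1,q_2)$ of reduced residues modulo $\ell^{\lceil\alpha/2\rceil}$ and $\overline{m'}$ respectively. Under this bijection, $\omega_q(\ell^\alpha)=\omega_{q_1}(\ell^\alpha)$ and $\omega_q(m')=\omega_{q_2}(m')$. Therefore the sum factors:
$$\sum_{\substack{1\le q\le \overline{m}\\ (q,\overline{m})=1}}\omega_q(m) = \Bigl(\sum_{\substack{1\le q_1\le \ell^{\lceil\alpha/2\rceil}\\ (q_1,\ell)=1}}\omega_{q_1}(\ell^\alpha)\Bigr)\Bigl(\sum_{\substack{1\le q_2\le \overline{m'}\\ (q_2,\overline{m'})=1}}\omega_{q_2}(m')\Bigr).$$
Dividing by $\varphi(\overline{m})=\varphi(\ell^{\lceil \alpha/2\rceil})\varphi(\overline{m'})$ gives exactly $C^{\ell^\alpha\divi}\cdot C^{m'\divi}$.

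The only delicate point is the well-definedness check: $\omega_q(\cdot)$ must be viewed as a function of residue classes rather than of actual prime powers $q$. The definition \eqref{omegaq} makes sense for any integer $q$, since it involves only congruence conditions, so this check is immediate. I expect no real obstacle beyond bookkeeping.
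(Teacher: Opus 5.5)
Your proposal is correct and follows essentially the same route as the paper's proof. Both arguments use the multiplicativity of $\overline{\cdot}$ and $\varphi$, the factorization $\omega_q(m)=\omega_q(\ell^\alpha)\,\omega_q(m')$ with each factor depending only on $q$ modulo $\overline{\ell^\alpha}$ and $\overline{m'}$ respectively, and the Chinese remainder theorem bijection on reduced residues to split the sum.
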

\begin{proof}
   Since $\ell^\alpha$ and $m'$ are coprime, we have $\overline{m} = \overline{\ell^\alpha} \cdot \overline{ m'}$. Recall that $\omega_q(m)$ is determined by the congruence class of $q \pmod {\overline{m}}$. By the Chinese remainder theorem, we have $(\Z/\overline{m}\Z)^\times \simeq (\Z/\overline{\ell^\alpha}\Z)^\times \times (\Z/\overline{m'}\Z)^\times$ and $\varphi(\overline{m}) = \varphi(\overline{\ell^\alpha})\varphi(\overline{m'})$. Thus, we have
   $$\omega_q(m) = \omega_q(\ell^\alpha m') = \omega_q(\ell^\alpha) \cdot \omega_{q}(m') = \omega_{q \neghs \pmod {\overline{\ell^\alpha}}}(\ell^\alpha) \cdot \omega_{q \neghs \pmod {\overline{m'}}}(m'),$$
   and hence
    $$\frac{1}{\varphi(\overline{m})}\sum_{\substack{1 \leq q \leq \overline{m} \\ (q,\overline{m}) = 1}} \omega_q(m) = \left(\frac{1}{\varphi(\overline{\ell^\alpha})}\sum_{\substack{1 \leq q_1 \leq \overline{\ell^\alpha} \\ (q_1,\overline{\ell^\alpha}) = 1}} \omega_{q_1}(\ell^\alpha) \right) \left(\frac{1}{\varphi(\overline{m'})}\sum_{\substack{1 \leq q_2 \leq \overline{m'} \\ (q_2,\overline{m'}) = 1}} \omega_{q_2}(m')\right).$$
    This completes the proof.
\end{proof}
\begin{lemma}\label{averagedensitycomputation}
    Let $\ell$ be a prime and $\alpha \geq 1$ be an integer. Then
$$C^{\ell^\alpha \divi}=\frac{1}{\varphi(\ell^\alpha)}
        \left(1-\frac{1}{\varphi(\ell^\alpha)(\ell+1)}\right).$$
\end{lemma}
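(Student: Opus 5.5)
This is a direct computation from \eqref{omegaq} and \eqref{definitionofaveragedensity} with $m=\ell^\alpha$. Set $c=\lceil \alpha/2\rceil$ and $d=\lfloor \alpha/2\rfloor$, so that $\overline{\ell^\alpha}=\ell^c$. The value $\omega_q(\ell^\alpha)$ depends only on whether $q\equiv 1 \pmod{\ell^c}$. Among the $\varphi(\ell^c)$ units modulo $\ell^c$, exactly one satisfies this congruence. Hence
$$C^{\ell^\alpha\divi}=\frac{1}{\varphi(\ell^c)}\left[(\varphi(\ell^c)-1)\cdot\frac{1}{\ell^{\alpha-1}(\ell-1)}+\frac{\ell^{d+1}+\ell^{d}-1}{\ell^{\alpha+d-1}(\ell^2-1)}\right].$$
The plan is to simplify this and compare it with the target
$$\frac{1}{\varphi(\ell^\alpha)}-\frac{1}{\varphi(\ell^\alpha)^2(\ell+1)}.$$

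Rewrite the bracket as $\frac{\varphi(\ell^c)}{\varphi(\ell^\alpha)}$ plus a correction term:
$$C^{\ell^\alpha\divi}=\frac{1}{\varphi(\ell^\alpha)}+\frac{1}{\varphi(\ell^c)}\left[\frac{\ell^{d+1}+\ell^d-1}{\ell^{\alpha+d-1}(\ell^2-1)}-\frac{1}{\ell^{\alpha-1}(\ell-1)}\right].$$
The difference inside the bracket has common denominator $\ell^{\alpha+d-1}(\ell^2-1)$. Its numerator is
$$\ell^{d+1}+\ell^d-1-\ell^d(\ell+1)=-1.$$
So the correction equals
$$-\frac{1}{\varphi(\ell^c)\,\ell^{\alpha+d-1}(\ell-1)(\ell+1)}.$$

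It remains to check the identity $\varphi(\ell^c)\,\ell^{\alpha+d-1}(\ell-1)=\varphi(\ell^\alpha)^2$. The left side is $\ell^{c-1}(\ell-1)\cdot\ell^{\alpha+d-1}(\ell-1)=\ell^{\alpha+c+d-2}(\ell-1)^2$. Since $c+d=\alpha$, this equals $\ell^{2\alpha-2}(\ell-1)^2=\varphi(\ell^\alpha)^2$, which gives the claimed formula.

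I do not expect a real obstacle here. The only delicate points are reading the exponents in \eqref{omegaq} correctly and using $c+d=\alpha$. As a sanity check, for $\alpha=1$ the formula gives $\frac{1}{\ell-1}-\frac{1}{(\ell-1)^2(\ell+1)}$. This matches Lemma \ref{FullGroupComputation}, which is consistent with the intended conclusion that $C^{m\divi}=C_E^{m\divi}$ whenever $G_E(m)$ is the full group.
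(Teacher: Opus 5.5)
Your proposal is correct and is essentially the paper's proof: both isolate the single class $q\equiv 1 \pmod{\ell^{\lceil\alpha/2\rceil}}$ from the other $\varphi(\ell^{\lceil\alpha/2\rceil})-1$ unit classes, note that the numerator collapses to $-1$, and finish with $\lceil\alpha/2\rceil+\lfloor\alpha/2\rfloor=\alpha$ (the paper writes this as $\varphi(\ell^{\alpha-c})\ell^{c}=\varphi(\ell^\alpha)$ with $c=\lfloor\alpha/2\rfloor$). The only difference is cosmetic: you divide by $\varphi(\ell^{\lceil\alpha/2\rceil})$ before simplifying, while the paper simplifies the sum first.
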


\begin{proof}
    Let $c = \lfloor \alpha/2\rfloor$. By \eqref{omegaq}, we have
    \begin{equation}\label{intermediatestep}
        \sum_{\substack{1\le q\le \ell^{\alpha-c}\\(q,\overline{\ell^\alpha})=1}}\omega_q(\ell^\alpha)= \frac{\varphi(\ell^{\alpha-c})-1}{\ell^{\alpha-1}(\ell-1)} + \frac{\ell^{c+1}+\ell^c-1}{\ell^{\alpha+c-1}(\ell^2-1)} = \frac{\varphi(\ell^{\alpha-c})}{\varphi(\ell^\alpha)} - \frac{1}{\varphi(\ell^\alpha)\ell^c(\ell+1)}.
    \end{equation}
    Note that $\varphi(\ell^{\alpha-c})\ell^c = \varphi(\ell^\alpha)$. Therefore, multiplying both sides of \eqref{intermediatestep} by
    $1/\varphi(\ell^{\alpha-c})$, we obtain the desired result.
\end{proof}
By Proposition \ref{FullGroupCase}, Lemma \ref{multiplicativityofaveragedensity}, and Lemma \ref{averagedensitycomputation}, we deduce the following result:
\begin{proposition}\label{fullgroupcaseofconstant}
    Let $m$ be a positive integer. Then
      $$C_E^{m\divi} = \frac{1}{\varphi(m)} \prod_{\ell^\alpha \parallel m}
        \left(1-\frac{1}{\varphi(\ell^\alpha)(\ell+1)}\right).$$
    Given an elliptic curve $E$ with $G_E(m) \simeq \GL_2(\Z/m\Z)$, we have $C^{m\divi} = C_E^{m\divi}$.
\end{proposition}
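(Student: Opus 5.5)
The statement is an assembly of three earlier results: Proposition \ref{FullGroupCase}, Lemma \ref{multiplicativityofaveragedensity}, and Lemma \ref{averagedensitycomputation}. I would first read the displayed product formula as a definition of the right-hand side. The real content is then two claims: that $C^{m\divi}$ equals this product for every $m$, and that $C_E^{m\divi}$ equals it whenever $G_E(m)\simeq \GL_2(\Z/m\Z)$.

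\textbf{Step 1: the average density.} Factor $m=\prod_{\ell^\alpha\parallel m}\ell^\alpha$ and apply Lemma \ref{multiplicativityofaveragedensity} repeatedly, by induction on the number of distinct prime divisors of $m$. This gives $C^{m\divi}=\prod_{\ell^\alpha\parallel m}C^{\ell^\alpha\divi}$. Substituting Lemma \ref{averagedensitycomputation} for each factor and using $\prod_{\ell^\alpha\parallel m}\varphi(\ell^\alpha)=\varphi(m)$ gives
$$C^{m\divi}=\frac{1}{\varphi(m)}\prod_{\ell^\alpha\parallel m}\left(1-\frac{1}{\varphi(\ell^\alpha)(\ell+1)}\right).$$
For $m=1$ both sides are $1$: there are no primes dividing $m$, $\overline{m}=1$, and $\omega_q(1)=1$ as the empty product.

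\textbf{Step 2: the individual density.} Assume $G_E(m)\simeq\GL_2(\Z/m\Z)$. By \eqref{CmdivE}, $C_E^{m\divi}=|\Psi(m)|/|\GL_2(\Z/m\Z)|$. The CRT isomorphism $\GL_2(\Z/m\Z)\simeq\prod_{\ell^\alpha\parallel m}\GL_2(\Z/\ell^\alpha\Z)$ carries $\Psi(m)$ onto $\prod_{\ell^\alpha\parallel m}\Psi(\ell^\alpha)$. This holds because $\det(I-M)\equiv 0 \pmod m$ exactly when $\det(I-M)\equiv 0 \pmod{\ell^\alpha}$ for each $\ell^\alpha\parallel m$. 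So the proportion is the product of the local proportions, and Lemma \ref{FullGroupComputation} evaluates each one. This is exactly the content of Proposition \ref{FullGroupCase}, and comparing with Step 1 gives $C_E^{m\divi}=C^{m\divi}$.

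\textbf{Where the difficulty lies.} The only delicate point is in Step 2: the isomorphism $G_E(m)\simeq\GL_2(\Z/m\Z)$ must be read as equality of subgroups of $\GL_2(\Z/m\Z)$ (up to conjugation), not merely as an abstract isomorphism. This matters because $\Psi(m)$ is defined inside $\GL_2(\Z/m\Z)$ and is not invariant under arbitrary abstract isomorphisms. Since $G_E(m)\subseteq\GL_2(\Z/m\Z)$ is finite, equal cardinality forces equality. Moreover, $\Psi(m)$ is conjugation-invariant, so the choice of basis is irrelevant. With that settled, the rest is bookkeeping.
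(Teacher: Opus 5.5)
Your proposal is correct and is the paper's argument: the paper deduces this proposition directly from Proposition \ref{FullGroupCase}, Lemma \ref{multiplicativityofaveragedensity}, and Lemma \ref{averagedensitycomputation}, exactly as you do. You were also right to read the first display as a formula for $C^{m\divi}$ rather than $C_E^{m\divi}$; without the hypothesis on $G_E(m)$ it fails for $C_E^{m\divi}$ (e.g.\ Remark \ref{upperbound}).
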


\begin{remark}
Proposition \ref{fullgroupcaseofconstant} shows that the average density $C^{m\divi}$ agrees with the density obtained from a single elliptic curve whose mod $m$ Galois representation has full image. In this sense, one may view $C^{m\divi}$ as the density $C_E^{m\divi}$ that would arise if the adelic Galois representation of $E$ were surjective. As discussed in Section \ref{SerreCurve}, no elliptic curve over $\Q$ has surjective adelic Galois representation. Nevertheless, this heuristic provides a useful guide for predicting average densities in related problems.

For the cyclicity problem, Banks and Shparlinski \cite{MR2570668} studied the average density over families of elliptic curves, while Jones \cite{MR2534114} expressed the density for a fixed non-CM elliptic curve in terms of its adelic Galois image. Formally replacing the adelic image by the full adelic group in Jones's formula recovers the average density. For the Koblitz--Zywina conjecture, Balog, Cojocaru, and David \cite{MR2843097} studied the average asymptotic behavior over families of elliptic curves, while Zywina \cite{MR2805578} expressed the constant for a fixed elliptic curve in terms of its adelic Galois image. Again, formally replacing the adelic image by the full adelic group in Zywina's formula recovers the average behavior studied by Balog--Cojocaru--David. Thus, in each of these problems, the average behavior is governed by the generic Galois image, whereas the behavior of a fixed elliptic curve may contain correction factors coming from its actual adelic image.
\end{remark}

\section{Computing the density for Serre curve} \label{S:index2density}
In the previous section, we computed the $m$-divisibility density $C_E^{m\divi}$ in the case where $G_E(m) \simeq \GL_2(\Z/m\Z)$. In this section, we compute $C_E^{m\divi}$ for a Serre curve in the case where $m_E \mid m$.

Let $m = m_1m_2$ with $m_1 = \gcd(m,m_E^\infty)$ and $\gcd(m_2,m_E) = 1$. By Lemma \ref{mixedgroupdecomposition} and Proposition \ref{fullgroupcaseofconstant}, we have
\begin{equation}\label{correctdecomposition}
    C_E^{m\divi} = \frac{|G_E(m_1) \cap \Psi(m_1)|}{|G_E(m_1)|} \cdot C^{m_2\divi}.
\end{equation}
It follows from Proposition \ref{Serremodmimage}, \eqref{kerofpsim}, and \eqref{Psim} that
$$|G_E(m_2)| = \frac{1}{2}|\GL_2(\Z/m_2\Z)| \quad \text{ and } \quad |G_E(m_1) \cap \Psi(m_1)| = |\ker \psi_{m_1} \cap \Psi(m_1)|.$$
Additionally, as shown in the proof of Lemma \ref{mixedgroupdecomposition}, the isomorphism induced by the Chinese remainder theorem gives a one-to-one correspondence
$$\Psi(m) \longleftrightarrow \prod_{\ell^\alpha \parallel m} \Psi(\ell^\alpha).$$
In particular, $\Psi(m)$ satisfies the hypothesis of \cite[Lemma 16]{MR2534114}, and this gives the following proposition.
\begin{proposition}\label{countingprop}
    Let $E/\Q$ be a Serre curve of adelic level $m_E$ and $m$ be a positive integer such that $m_E \mid m \mid m_E^\infty$. Then we have
    $$|\psi_m^{-1}(+1) \cap \Psi(m)| = \frac{1}{2}\left(|\Psi(m)|  + \prod_{\ell^\alpha \parallel m}\left(|\psi^{-1}_{\ell^\alpha} (+1) \cap \Psi(\ell^\alpha)| - |\psi^{-1}_{\ell^\alpha}(-1) \cap \Psi(\ell^\alpha)|\right) \right).$$
\end{proposition}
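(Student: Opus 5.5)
We prove the count through the indicator of $\ker\psi_m$, written with characters. For $A \in \GL_2(\Z/m\Z)$ the indicator of $\psi_m(A)=+1$ is $\frac{1}{2}(1+\psi_m(A))$, since $\psi_m$ takes only the values $\pm1$. Summing over $\Psi(m)$ gives
$$|\psi_m^{-1}(+1)\cap\Psi(m)| = \frac{1}{2}\Big(|\Psi(m)| + \sum_{A\in\Psi(m)}\psi_m(A)\Big).$$
So everything reduces to evaluating the character sum $S(m) := \sum_{A\in\Psi(m)}\psi_m(A)$. This is exactly the situation of \cite[Lemma 16]{MR2534114}. Its hypothesis is that the set is a product of local sets and the character is a product of local characters, and we can also verify this directly as follows.

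By the Chinese remainder theorem isomorphism $\GL_2(\Z/m\Z)\simeq\prod_{\ell^\alpha\parallel m}\GL_2(\Z/\ell^\alpha\Z)$, sending $A \mapsto (A_{\ell^\alpha})$, the set $\Psi(m)$ corresponds to $\prod_{\ell^\alpha\parallel m}\Psi(\ell^\alpha)$. This was noted in the proof of Lemma \ref{mixedgroupdecomposition}: $\det(I-A)\equiv0 \pmod m$ holds iff it holds modulo each $\ell^\alpha$, and $\det(I-A)\bmod \ell^\alpha=\det(I-A_{\ell^\alpha})$. By definition, $\psi_m(A)=\prod_{\ell^\alpha \parallel m}\psi_{\ell^\alpha}(A_{\ell^\alpha})$. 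Hence the sum factors:
$$S(m)=\prod_{\ell^\alpha\parallel m}\ \sum_{A\in\Psi(\ell^\alpha)}\psi_{\ell^\alpha}(A).$$
Since $\psi_{\ell^\alpha}$ is $\pm1$-valued, each local factor equals $|\psi_{\ell^\alpha}^{-1}(+1)\cap\Psi(\ell^\alpha)|-|\psi_{\ell^\alpha}^{-1}(-1)\cap\Psi(\ell^\alpha)|$. Substituting this into the displayed identity gives the proposition.

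The only point needing care is that each $\psi_{\ell^\alpha}$ is a well-defined character on $\GL_2(\Z/\ell^\alpha\Z)$ for the relevant $\alpha$. In particular, $\chi_4$ and $\chi_8$ require $\alpha\ge2$ and $\alpha\ge3$ respectively when they appear. This is guaranteed by $m_E\mid m$ together with Proposition \ref{sizeofmE}: when $\Delta'_E\equiv3\pmod 4$ we have $4\mid m_E$, and when $\Delta'_E$ is even we have $8\mid m_E$, since then $4|\Delta'_E|$ has $2$-adic valuation $3$. No step here is a real obstacle. The argument is bookkeeping, and the genuine work is deferred to computing the local differences, which the proposition isolates.
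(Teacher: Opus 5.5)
Your proposal is correct and follows the paper's route. The paper notes that $\Psi(m)$ corresponds to $\prod_{\ell^\alpha\parallel m}\Psi(\ell^\alpha)$ under the Chinese remainder theorem and then invokes \cite[Lemma 16]{MR2534114}; you do the same, and you also write out that lemma's short proof via the indicator $\frac12(1+\psi_m)$ and the factorization of the character sum, together with a correct check that $\chi_4$ and $\chi_8$ are defined when they appear.
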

For notational convenience, we express
$$Y_{\ell^\alpha, \pm} \coloneqq \psi_{\ell^\alpha}^{-1}(\pm 1) \cap \Psi(\ell^\alpha).$$
It suffices to count $|Y_{\ell^\alpha, \pm}|$ for each prime powers $\ell^\alpha$. We first introduce some notation and lemmas. As one can see from \eqref{localcharacter}, $\psi_{\ell^\alpha}$ is defined differently when $\ell = 2$, so we treat this case separately.

\subsection{Prerequisites for $\ell = 2$ case}
First of all, we introduce an identity that will be used frequently throughout the section. For any commutative ring $R$ and $A,B \in M_2(R)$, we have
\begin{equation}\label{eq:det_expand_2x2}
\det(A+B)=\det(A)+\det(B)+\Tr(\adj(A)B),
\end{equation}
where $\adj(A)$ denotes the adjugate matrix of $A$. Additionally, throughout this section and henceforth, we will identify $2^\alpha\Z/2^{\alpha+\beta}\Z$ with $\Z/2^\beta\Z$ in the natural way without further comment.

Let $A \in M_2(\Z/2^\alpha \Z)$ with $\det A = 0$ and $A \equiv 0 \pmod 2$. Let $B$ be any lifting of $A$ modulo $2^{\alpha+1}$. We define $\delta(A) \in \Z/2\Z$ so that
\begin{equation}\label{defofdelta}
    \det B\equiv 2^\alpha \delta (A) \pmod {2^{\alpha+1}}.
\end{equation}
\begin{lemma}
    The map $\delta$ is well-defined.
\end{lemma}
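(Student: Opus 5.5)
We must show that $\delta(A)$ does not depend on the chosen lift $B$ of $A$ to $M_2(\Z/2^{\alpha+1}\Z)$. There are two things to check. First, $\det B$ really does lie in $2^\alpha\Z/2^{\alpha+1}\Z$, so that \eqref{defofdelta} makes sense. Second, any two lifts give the same residue modulo $2^{\alpha+1}$.

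For the first point, reduction modulo $2^\alpha$ commutes with the determinant. Hence $\det B \equiv \det A = 0 \pmod{2^\alpha}$, and so $\det B \in 2^\alpha\Z/2^{\alpha+1}\Z \simeq \Z/2\Z$.

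For the second point, let $B$ and $B'$ be two lifts of $A$. Then $B' = B + 2^\alpha C$ for some $C \in M_2(\Z/2^{\alpha+1}\Z)$. Applying \eqref{eq:det_expand_2x2} gives
$$\det B' = \det B + 2^{2\alpha}\det C + 2^\alpha \Tr(\adj(B) C).$$
We treat the two correction terms separately.
\begin{itemize}
\item Since $\alpha \geq 1$, we have $2\alpha \geq \alpha+1$, so the middle term vanishes modulo $2^{\alpha+1}$.
\item Since $A \equiv 0 \pmod 2$ and $\alpha \geq 1$, the lift $B$ also satisfies $B \equiv 0 \pmod 2$. The adjugate of a $2\times 2$ matrix is formed from its entries, up to sign and rearrangement, so $\adj(B) \equiv 0 \pmod 2$. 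Hence $\Tr(\adj(B)C)$ is even, and $2^\alpha \Tr(\adj(B)C) \equiv 0 \pmod{2^{\alpha+1}}$.
\end{itemize}
Therefore $\det B' \equiv \det B \pmod{2^{\alpha+1}}$, and $\delta(A)$ is well-defined.

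No step here is a real obstacle. The only point needing care is the hypothesis $A \equiv 0 \pmod 2$: it is used to kill the linear term $2^\alpha\Tr(\adj(B)C)$, and the bound $2\alpha \geq \alpha+1$ (that is, $\alpha \geq 1$) is used to kill the quadratic term.
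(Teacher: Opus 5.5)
Your proof is correct and follows the paper's argument: you write two lifts as differing by $2^\alpha C$, expand via \eqref{eq:det_expand_2x2}, and use $A \equiv 0 \pmod 2$ to get $\adj(B) \equiv 0 \pmod 2$, so the linear term vanishes modulo $2^{\alpha+1}$. You also spell out two points the paper leaves implicit: that $\det B \equiv 0 \pmod{2^\alpha}$, so $\delta(A)$ makes sense, and that the quadratic term $2^{2\alpha}\det C$ vanishes because $\alpha \geq 1$.
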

\begin{proof}
    Let $B_1$ and $B_2$ be two such lifts of $A$. Then there exists $X \in M_2(\Z/2\Z)$ such that 
    $$B_1 = B_2 + 2^\alpha X.$$
    By \eqref{eq:det_expand_2x2}, we have
    $$\det B_1 \equiv \det B_2 + 2^\alpha \Tr (\adj(B_2)X) \pmod {2^{\alpha+1}}.$$
    Since $A \equiv 0 \pmod 2$, we have $\adj(B_2) \equiv 0 \pmod 2$. Thus, we have
    $$\det B_1 \equiv \det B_2 \pmod {2^{\alpha+1}}.$$
    Hence, $\delta$ does not depend on the choice of the lift, given that $\det A = 0$ and $A \equiv 0 \pmod 2$.
\end{proof}

We define the following sets that will be used throughout:
\begin{equation}\label{WalphaValpha}
    \begin{aligned}
        W_\alpha &\coloneqq \{X \in M_2(\Z/2^\alpha\Z) : \det X \equiv 0  \pmod {2^{\alpha-1}}\} \\
        V_\alpha &\coloneqq \{X \in M_2(\Z/2^\alpha\Z) : \det X \equiv 0 \pmod {2^{\alpha}}\}
    \end{aligned}
\end{equation}
Clearly, $V_\alpha \subseteq W_\alpha$. If $\alpha = 1$,  the determinant condition in $W_1$ becomes trivial, and hence we take $M_2(\Z/2\Z)$ as $W_1$. We also use the following partitions:
\begin{equation}\label{WalphaValphapartitioned}
    \begin{aligned}
        W_\alpha &= \{X \in W_\alpha : X \equiv 0 \pmod 2\} \sqcup \{X\in W_\alpha: X \not \equiv 0 \pmod 2\} =: W_{\alpha,0} \sqcup W_{\alpha,*},\\
        V_\alpha &= \{X \in V_\alpha :  X \equiv 0 \pmod 2\} \sqcup \{X \in V_\alpha : X \not \equiv 0 \pmod 2\} =: V_{\alpha,0}\sqcup V_{\alpha,*}.
\end{aligned}
\end{equation}

\begin{lemma}\label{M2lifting}
    Let $A \in M_2(\Z/2^\alpha \Z)$ with $\det A = 0$. Then, we have
    $$\#\{B \in M_2(\Z/2^{\alpha+1}\Z ): B \equiv A \neghs \pmod {2^{\alpha}}, \det B \equiv 0 \neghs \pmod {2^{\alpha+1}}\} = \begin{cases}
        8 & \text{if } A \not \equiv 0 \neghs \pmod 2, \\
        16 & \text{if } A \equiv 0 \neghs \pmod 2 \text{ and } \delta(A) = 0, \\
        0 & \text{if } A \equiv 0 \neghs \pmod 2 \text{ and } \delta(A) = 1.
    \end{cases}$$
    In particular, for a non-zero rank 1 matrix $A \in M_2(\Z/2\Z)$, we have
    $$\#\{B \in M_2(\Z/2^{\alpha}\Z) : B \equiv A  \pmod 2, \det B \equiv 0  \pmod {2^{\alpha}}\} = 8^{\alpha-1}.$$
\end{lemma}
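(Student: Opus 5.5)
Every lift of $A$ to $\Z/2^{\alpha+1}\Z$ has the form $B = \tilde A + 2^\alpha X$, where $\tilde A$ is a fixed lift and $X$ ranges over $M_2(\Z/2\Z)$. There are therefore exactly $16$ lifts, and identity \eqref{eq:det_expand_2x2} reduces the determinant condition to a condition on $X$. Expanding, and using $2^{2\alpha}\det X \equiv 0 \pmod{2^{\alpha+1}}$ (valid for $\alpha\ge 1$), we get
$$\det B \equiv \det\tilde A + 2^\alpha \Tr(\adj(\tilde A)X) \pmod{2^{\alpha+1}}.$$
Since $\det A = 0$ in $\Z/2^\alpha\Z$, we may write $\det\tilde A \equiv 2^\alpha c \pmod{2^{\alpha+1}}$ for some $c\in\Z/2\Z$. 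The condition $\det B\equiv 0$ then becomes the affine equation
$$c + \Tr(\adj(\bar A)X) = 0 \quad\text{in } \Z/2\Z,$$
where $\bar A = A \bmod 2$.

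\textbf{Case $A\equiv 0\pmod 2$.} Then $\adj(\bar A)=0$, so the equation reads $c=0$. By definition \eqref{defofdelta} we have $c=\delta(A)$, so either all $16$ choices of $X$ work or none do, according to whether $\delta(A)=0$ or $1$.

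\textbf{Case $A\not\equiv 0\pmod 2$.} Then $\bar A\neq 0$, so $\adj(\bar A)\ne 0$, because the adjugate of a $2\times 2$ matrix just permutes the entries and changes signs. The map $X\mapsto \Tr(\adj(\bar A)X)$ is the trace pairing with a nonzero matrix, hence a nonzero linear functional on $M_2(\Z/2\Z)\cong \F_2^4$. Its solution set for either value of $c$ is an affine hyperplane with $8$ elements. The only point needing care is verifying that the cross term is exactly $2^\alpha\Tr(\adj(\tilde A)X)$ and depends only on $\bar A$ modulo $2^{\alpha+1}$; this follows from the bilinearity of $\Tr(\adj(\cdot)\,\cdot)$.

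For the final assertion, I will induct on $\alpha$, counting matrices $B\in M_2(\Z/2^\alpha\Z)$ with $B\equiv A\pmod 2$ and $\det B\equiv0\pmod{2^\alpha}$. In the base case $\alpha=1$, the only such matrix is $A$ itself, since $\det A=0$ because $A$ has rank $1$; the count is $1=8^0$. For the inductive step, every such matrix at level $\alpha$ is nonzero modulo $2$ (it reduces to $A\neq 0$) and has determinant $0$ in $\Z/2^\alpha\Z$. By the first part, it has exactly $8$ lifts to level $\alpha+1$ with vanishing determinant. Conversely, each valid matrix at level $\alpha+1$ reduces to a valid one at level $\alpha$. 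Hence the count multiplies by $8$ at each step, giving $8^{\alpha-1}$. I expect no real obstacle here; the main thing to get right is the bookkeeping that the lifting count is uniform over all valid matrices at each level.
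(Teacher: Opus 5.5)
Your proposal is correct and follows essentially the same argument as the paper. You write each lift as $\tilde A + 2^\alpha X$ and use the adjugate identity to turn $\det B \equiv 0$ into an affine condition on $X \in M_2(\Z/2\Z)$. The linear part vanishes when $A \equiv 0 \pmod 2$, giving $16$ or $0$ solutions according to $\delta(A)$, and is a nonzero functional otherwise, giving $8$; the second claim then follows by induction on $\alpha$, where you spell out the $2^{2\alpha}\det X$ term and the uniform-lifting bookkeeping a bit more explicitly than the paper does.
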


\begin{proof}
    Let $A$ satisfy the hypothesis in the first claim and fix a lift $B$ of $A$ modulo $2^{\alpha+1}$. Then any lift of $A$ modulo $2^{\alpha+1}$ can be uniquely expressed as $B + 2^\alpha X$ for some $X \in M_2(\Z/2\Z)$. By \eqref{eq:det_expand_2x2},
    $$\det(B+2^\alpha X) \equiv \det B + 2^\alpha \Tr(\adj(B) X ) \pmod {2^{\alpha+1}}.$$
    Let $L_B : M_2(\Z/2\Z) \to \Z/2\Z$ by $L_B(X) \coloneqq \Tr(\adj(B)X) \pmod 2$. Then, we have
        \begin{equation}\label{linearfunctionalcondition}
\det(B+2^\alpha X) \equiv 0 \pmod {2^{\alpha+1}} \iff \det B + 2^\alpha L_B(X) \equiv 0 \pmod {2^{\alpha+1}}.\end{equation}

    Suppose $A \equiv 0 \pmod 2$. Then $\adj(B) \equiv 0 \pmod 2$, and hence $L_B \equiv 0 \pmod 2$. By \eqref{defofdelta}, the right-hand side of \eqref{linearfunctionalcondition} turns into 
    $$2^\alpha \delta (A) \equiv 0 \pmod {2^{\alpha+1}}.$$
    All 16 choices of $X \in M_2(\Z/2\Z)$ satisfy the condition if $\delta(A) = 0$, while none does if $\delta(A) = 1$.

    Suppose $A \not \equiv 0 \pmod 2$, then $\adj(B) \not \equiv 0 \pmod 2$. Thus, $L_B$ is a nonzero linear functional, and hence surjective. Therefore, exactly half of $M_2(\Z/2\Z)$ satisfies the right-hand side of \eqref{linearfunctionalcondition}. The second claim follows from the induction.
\end{proof}

\begin{lemma}\label{Fixageq2}
    Fix $\alpha \geq 2$. The following hold:
    \begin{enumerate}[\label=(\roman*)]
        \item $|W_\alpha| = 16|V_{\alpha-1}|$;
        \item $|V_{\alpha,*}| = 9 \cdot 2^{3\alpha-3}$;
        \item $|W_{\alpha,*}| = 9 \cdot 2^{3\alpha-2}$;
        \item $|V_{\alpha,0}| = |W_{\alpha-1}|$.
    \end{enumerate}
\end{lemma}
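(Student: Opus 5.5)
The four identities will be proved by combining the reduction-mod-$2^{\alpha-1}$ map with Lemma \ref{M2lifting}, using the order (iv), (i), (ii), (iii) so that each step can draw on the previous ones.

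\textbf{Step (iv).} A matrix $X\in V_{\alpha,0}$ has the form $X=2Y$ with $Y\in M_2(\Z/2^{\alpha-1}\Z)$ uniquely determined; indeed the map $Y\mapsto 2Y$ from $M_2(\Z/2^{\alpha-1}\Z)$ onto the even matrices of $M_2(\Z/2^\alpha\Z)$ is a bijection. Since $\det(2Y)=4\det Y$, the condition $\det X\equiv 0\pmod{2^\alpha}$ is equivalent to $\det Y\equiv 0\pmod{2^{\alpha-2}}$, which says exactly that $Y\in W_{\alpha-1}$. For $\alpha=2$ the condition is vacuous, which matches the convention $W_1=M_2(\Z/2\Z)$. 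This gives (iv).

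\textbf{Step (i).} Reduce modulo $2^{\alpha-1}$. If $X\in W_\alpha$, then $\det(X \bmod 2^{\alpha-1})\equiv 0$, so the reduction lies in $V_{\alpha-1}$. Conversely, every lift of an element of $V_{\alpha-1}$ has determinant $\equiv 0 \pmod{2^{\alpha-1}}$, because the determinant modulo $2^{\alpha-1}$ depends only on the reduction. Each element of $V_{\alpha-1}$ has $16$ lifts, so $|W_\alpha|=16|V_{\alpha-1}|$.

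\textbf{Step (ii).} An element $X\in V_{\alpha,*}$ reduces modulo $2$ to a nonzero singular matrix, that is, a rank-$1$ matrix in $M_2(\Z/2\Z)$. There are $|V_1|-1=10-1=9$ such matrices, since $M_2(\F_2)$ has $16-6=10$ singular elements. The second claim of Lemma \ref{M2lifting} gives $8^{\alpha-1}$ lifts for each, so $|V_{\alpha,*}|=9\cdot 2^{3\alpha-3}$.

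\textbf{Step (iii).} The elements of $W_{\alpha,*}$ are exactly the lifts to $M_2(\Z/2^\alpha\Z)$ of elements of $V_{\alpha-1,*}$, and each element has $16$ lifts. For $\alpha\ge 3$, step (ii) applied at level $\alpha-1$ gives $|W_{\alpha,*}|=16\cdot 9\cdot 2^{3\alpha-6}=9\cdot 2^{3\alpha-2}$. For $\alpha=2$, the set $V_{1,*}$ consists of the $9$ rank-$1$ matrices, giving $16\cdot 9=9\cdot 2^4$. This agrees with the formula, since $8^0=1$ makes the case $\alpha-1=1$ of (ii) valid.

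The main point requiring care is the boundary case $\alpha=2$ together with the convention for $W_1$; otherwise every step is a direct application of Lemma \ref{M2lifting} and counting fibers of the reduction map.
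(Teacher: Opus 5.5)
Your proposal is correct and follows essentially the same route as the paper: the bijection $Y\mapsto 2Y$ for (iv), counting the $16$-element fibers of reduction modulo $2^{\alpha-1}$ for (i) and (iii), and Lemma \ref{M2lifting} together with the nine rank-$1$ matrices over $\Z/2\Z$ for (ii). Two small points differ. Your observation that $\det \bmod 2^{\alpha-1}$ depends only on the reduction modulo $2^{\alpha-1}$ justifies (i) more directly than the paper's use of \eqref{eq:det_expand_2x2}. Your separate check of $\alpha=2$ in (iii) covers a boundary case that the paper passes over when it says (iii) follows from (ii).
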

\begin{proof}
    Part (ii) follows from Lemma \ref{M2lifting} and the fact that there are nine non-zero rank 1 matrices over $\Z/2\Z$.

    Let $\pi : M_2(\Z/2^\alpha \Z) \to M_2(\Z/2^{\alpha-1}\Z)$ be the natural projection. Let $A \in W_\alpha$. Clearly, $\pi(A) \in V_{\alpha-1}$. Fix $B_0 \in V_{\alpha-1}$ and let $B$ be any lift of $B_0$ modulo $2^\alpha$. Then any lift of $B_0$ can be uniquely expressed as $B+2^{\alpha-1}X$ for some $X \in M_2(\Z/2\Z)$. By \eqref{eq:det_expand_2x2}, we have
    $$\det(B+2^{\alpha-1}X) \equiv \det (B) + 2^{2\alpha-2}\det (X) + 2^{\alpha-1} \Tr(\adj(B)X) \equiv \det B \equiv \det B_0 \equiv 0 \pmod {2^{\alpha-1}},$$
    since $\alpha \geq 2$. This shows that $\pi^{-1}(V_{\alpha-1}) = W_\alpha$. Since each fiber has size 16, part (i) follows. Similarly, one can check that $\pi^{-1}(V_{\alpha-1,*}) = W_{\alpha,*}$, and hence part (iii) follows from part (ii). 

    Let $X \in V_{\alpha,0}$. Then there exists a unique $X' \in M_2(\Z/2^{\alpha-1}\Z)$ for which $X = 2X'$. Additionally,
$$\det X \equiv 0 \pmod {2^{\alpha}} \iff \det X' \equiv 0 \pmod {2^{\alpha-2}},$$
and hence it yields a one-to-one correspondence between
$$V_{\alpha,0} \longleftrightarrow \{X' \in M_2(\Z/2^{\alpha-1}\Z) : \det X' \equiv 0 \pmod {2^{\alpha-2}}\} = W_{\alpha-1}.$$
Thus, part (iv) follows.
\end{proof}

\subsection{The case $\ell = 2$}

By \eqref{Psim}, recall that
$$Y_{2^\alpha,\pm} = \{A \in \GL_2(\Z/2^\alpha\Z) : \psi_{2^{\alpha}}(A) = \pm 1 \text{ and } \det(I-A) \equiv 0 \pmod {2^{\alpha}}\}.$$
We aim to compute $|Y_{2^{\alpha},+}| - |Y_{2^{\alpha},-}|$. Since we are taking $m_E \mid m$ in this chapter, we have $\alpha \coloneqq v_2(m) \geq v_2(m_E)$ and hence
\begin{equation}\label{conditiononalpha}
    \Delta'_E \equiv \begin{cases}
        1 \pmod 4 \implies \alpha \geq 1,\\
        3 \pmod 4 \implies \alpha \geq 2,\\
        2 \pmod 4 \implies \alpha \geq 3.
        \end{cases}
\end{equation}
Fix $\beta \geq \alpha$ and $A \in \GL_2(\Z/2^{\alpha} \Z)$. We define 
$$G_{\alpha}^\beta(A) \coloneqq \{B\in \GL_2(\Z/2^\beta \Z) : B\equiv A \neghs \pmod {2^\alpha} \text{ and } \det(I-B) \equiv 0 \neghs \pmod {2^{\beta}}\}.$$
Then we have
\begin{equation}\label{Ysum}
    |Y_{2^\alpha,+}| - |Y_{2^\alpha,-}| = \sum_{A \in \GL_2(\Z/2\Z)}\sum_{B \in G_1^\alpha(A)} \psi_{2^\alpha}(B) =: \sum_{A \in \GL_2(\Z/2\Z)} n_\alpha(A).
\end{equation}
We identify $\GL_2(\Z/2\Z) \simeq S_3$. The group is composed of three conjugacy classes: the classes of the identity $I$, the transpositions $\tau$, and the 3-cycles $\sigma$.
\begin{lemma}\label{transpositionlemma}
    Let $\tau$ and $\tau'$ be two distinct transpositions in $\GL_2(\Z/2\Z)$. Then $n_\alpha(\tau) = n_\alpha(\tau')$.
\end{lemma}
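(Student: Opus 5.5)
The natural approach is conjugation invariance. Since $\tau$ and $\tau'$ are conjugate in $\GL_2(\Z/2\Z)\simeq S_3$, we may fix $g\in\GL_2(\Z/2\Z)$ with $g\tau g^{-1}=\tau'$. We then lift $g$ to some $\tilde g\in\GL_2(\Z/2^\alpha\Z)$, which is possible because reduction $\GL_2(\Z/2^\alpha\Z)\to\GL_2(\Z/2\Z)$ is surjective. The claim is that conjugation $B\mapsto \tilde g B\tilde g^{-1}$ is a bijection $G_1^\alpha(\tau)\to G_1^\alpha(\tau')$ preserving $\psi_{2^\alpha}$; summing over this bijection then gives $n_\alpha(\tau)=n_\alpha(\tau')$.

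To check that conjugation maps $G_1^\alpha(\tau)$ onto $G_1^\alpha(\tau')$, note first that reduction mod $2$ is a homomorphism, so $\tilde gB\tilde g^{-1}\equiv g\tau g^{-1}=\tau'\pmod 2$. Next,
$$\det(I-\tilde gB\tilde g^{-1})=\det\bigl(\tilde g(I-B)\tilde g^{-1}\bigr)=\det(I-B)\equiv 0\pmod{2^\alpha}.$$
Conjugation by $\tilde g^{-1}$ gives the inverse map, so this is a bijection.

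To see that $\psi_{2^\alpha}$ is preserved, observe that in every case of \eqref{localcharacter}, $\psi_{2^\alpha}$ is a product of $\epsilon$, $\chi_4$ and $\chi_8$. The characters $\chi_4$ and $\chi_8$ depend only on $\det B$, which is conjugation invariant. The character $\epsilon$ depends only on $B\bmod 2$, and it is the sign character on $S_3$, which is a class function, so $\epsilon(\tau')=\epsilon(\tau)=-1$. More simply, $\psi_{2^\alpha}$ is a group homomorphism to the abelian group $\{\pm1\}$, so $\psi_{2^\alpha}(\tilde gB\tilde g^{-1})=\psi_{2^\alpha}(B)$ automatically. In fact this observation alone suffices.

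There is no real obstacle here. The only point needing care is that the lift $\tilde g$ exists, which is standard surjectivity of reduction on $\GL_2$. One could equally take $\tilde g$ to be a permutation matrix with integer entries, which is invertible over every $\Z/2^\alpha\Z$.
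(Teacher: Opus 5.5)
Your proof is correct and is essentially the paper's argument: conjugation by a lift of the conjugating element gives a bijection $G_1^\alpha(\tau)\to G_1^\alpha(\tau')$ that preserves $\psi_{2^\alpha}$. Your remark that $\psi_{2^\alpha}$ is a homomorphism to the abelian group $\{\pm1\}$ is a clean justification of the invariance that the paper reads off from \eqref{localcharacter}.

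One harmless slip is in your closing aside: a $2\times 2$ permutation matrix cannot always serve as the conjugator. Both $I$ and $\begin{pmatrix}0&1\\1&0\end{pmatrix}$ centralize $\begin{pmatrix}0&1\\1&0\end{pmatrix}$, so neither sends it to $\begin{pmatrix}1&1\\0&1\end{pmatrix}$. This does not affect the argument, since any integer lift of $g$ has odd determinant and therefore works.
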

\begin{proof}
Let $h \in \GL_2(\Z/2\Z)$ so that $h\tau h^{-1} = \tau'$. Let $T \in G_1^{\alpha}(\tau)$ and $H$ be a lift of $h$ modulo $2^\alpha$. Then we have
$$HTH^{-1} \equiv h\tau h^{-1} \pmod 2 \quad \text{ and } \quad \det(I-HTH^{-1}) = \det(I-T) \equiv 0 \pmod {2^{\alpha}}.$$
Thus, the conjugation by $H$ gives a bijective map $G_1^\alpha(\tau) \to G_1^\alpha(\tau')$. By \eqref{localcharacter}, we see that $\psi_{2^{\alpha}}(HTH^{-1}) = \psi_{2^\alpha}(T)$ for every $T \in G_1^\alpha(\tau)$. This completes the proof.
\end{proof}
Let $\sigma$ be a 3-cycle in $\GL_2(\Z/2\Z)$. Then, the eigenvalue of $\sigma$ does not lie in $\Z/2\Z$, and hence $\det(I-\sigma) \not \equiv 0 \pmod 2$. That is, $G_1^\alpha(\sigma)$ is empty for any $\alpha \geq 1$, and hence the conjugacy class of 3-cycles does not contribute to \eqref{Ysum}. Fixing a transposition $\tau$, by Lemma \ref{transpositionlemma}, we have
\begin{equation}\label{Yalphasum}
    |Y_{2^\alpha,+}| - |Y_{2^\alpha,-}| = n_\alpha(I) + 3n_\alpha(\tau).
\end{equation}
We first compute $n_\alpha(\tau)$.
\begin{lemma}\label{nalphatau}
    Let $E/\Q$ be a Serre curve whose square part of the discriminant is $\Delta'_E$. Choose a transposition $\tau \in \GL_2(\Z/2\Z)$. Then
    $$n_\alpha(\tau) = \begin{cases}
        -2^{3(\alpha-1)} & \text{ if } \Delta'_E \equiv 1 \pmod 4, \\
        0 & \text{ otherwise}.
    \end{cases}$$
\end{lemma}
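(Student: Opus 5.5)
The plan is to compute $n_\alpha(\tau)$ directly, by parametrizing the set $G_1^\alpha(\tau)$ explicitly and then evaluating the character $\psi_{2^\alpha}$ on it. Two simplifications come first.

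\emph{The factor $\epsilon$ is constant.} Every $B\in G_1^\alpha(\tau)$ satisfies $B\equiv\tau \pmod 2$, so $\epsilon(B)=\epsilon(\tau)=-1$. By \eqref{localcharacter}, this gives $\psi_{2^\alpha}(B)=-\chi(B)$, where $\chi$ is one of $1,\chi_4,\chi_8,\chi_8\chi_4$ according to the class of $\Delta'_E$. The condition \eqref{conditiononalpha} guarantees that $\alpha$ is large enough for $\chi$ to be defined.

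\emph{Pass to $X=I-B$.} Under this substitution, $B$ ranges over $G_1^\alpha(\tau)$ exactly when $X$ ranges over the matrices with $X\equiv I+\tau \pmod 2$ and $\det X\equiv 0 \pmod{2^\alpha}$. Invertibility of $B$ is automatic, since $\det B\equiv\det\tau=1 \pmod 2$. Moreover,
$$\det B=\det(I-X)=1-\Tr X+\det X\equiv 1-\Tr X \pmod{2^\alpha},$$
so the value $\chi(B)$ depends only on $\Tr X$ modulo $8$.

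\emph{Parametrizing the lifts.} After conjugating (Lemma \ref{transpositionlemma}), we may take $\tau=\begin{pmatrix}0&1\\1&0\end{pmatrix}$, so that $I+\tau=\begin{pmatrix}1&1\\1&1\end{pmatrix}$. Write $X=\begin{pmatrix}a&b\\c&d\end{pmatrix}$ with $a,b,c,d$ odd. Because $b$ is a unit modulo $2^\alpha$, the condition $ad-bc\equiv 0 \pmod{2^\alpha}$ determines $c\equiv adb^{-1}$ uniquely, and this $c$ is automatically odd. Hence the map $X\mapsto(a,b,d)$ is a bijection onto triples of units modulo $2^\alpha$. This gives $|G_1^\alpha(\tau)|=(2^{\alpha-1})^3=8^{\alpha-1}$, which agrees with Lemma \ref{M2lifting}.

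\emph{Distribution of the trace.} The pair $(a,d)$ runs uniformly over pairs of units modulo $2^\alpha$. Therefore $a+d$ is equidistributed among the even residues modulo $8$ when $\alpha\ge 3$, and among the even residues modulo $4$ when $\alpha\ge2$. Consequently $\det B\equiv 1-(a+d)$ is equidistributed over $(\Z/8\Z)^\times$, respectively over $(\Z/4\Z)^\times$.

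\emph{Conclusion in each case.}
\begin{itemize}
\item If $\Delta'_E\equiv 1\pmod 4$, then $\chi=1$, so $n_\alpha(\tau)=-|G_1^\alpha(\tau)|=-2^{3(\alpha-1)}$.
\item If $\Delta'_E\equiv 3\pmod 4$, then $\chi=\chi_4$ and $\alpha\ge2$. The character $\chi_4$ is a nontrivial character of $(\Z/4\Z)^\times$, and $\det B$ is equidistributed there, so the sum vanishes.
\item If $\Delta'_E\equiv 2 \pmod 4$, then $\chi\in\{\chi_8,\chi_8\chi_4\}$ and $\alpha\ge 3$. Each of these is a nontrivial character of $(\Z/8\Z)^\times$: $\chi_8$ is $-1$ on $\pm3$, and $\chi_8\chi_4$ is $-1$ on $5$ and $7$. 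Again the sum vanishes by equidistribution.
\end{itemize}

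The only step needing genuine care is the equidistribution of $\Tr X$ modulo $8$ under the determinant constraint. The explicit solve-for-$c$ parametrization, which works because $b$ is odd, is exactly what removes this difficulty.
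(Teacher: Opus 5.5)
Your proof is correct, but it proves the vanishing differently from the paper. Both arguments start the same way: $\epsilon\equiv -1$ on $G_1^\alpha(\tau)$, and the case $\Delta'_E\equiv 1 \pmod 4$ is a pure count. For $\Delta'_E\equiv 3$ and $\Delta'_E\equiv 2 \pmod 4$, the paper works at the minimal level $4$, resp.\ $8$. It chooses $X_0\in\ker L_1\setminus\ker L_2$ for the functionals $L_1(X)=\Tr(\adj(I-\tau)X)$ and $L_2(X)=\Tr(\adj(\tau)X)$. Then $M\mapsto M+2X_0$ (resp.\ $M\mapsto M+4X_0$) is an involution of $G_1^2(\tau)$ (resp.\ of each $G_2^3(\tau')$) that flips $\chi_4$ (resp.\ flips $\chi_8$ and fixes $\chi_4$). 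It passes to arbitrary $\alpha$ because $\psi_{2^\alpha}$ factors through that level and every fibre $G_2^\alpha(M)$, resp.\ $G_3^\alpha(M)$, has size $8^{\alpha-2}$, resp.\ $8^{\alpha-3}$, by Lemma \ref{M2lifting}. You instead normalize $\tau$, solve $ad-bc\equiv 0$ for $c$ using that $b$ is a unit, and obtain an explicit bijection between $G_1^\alpha(\tau)$ and triples of units, together with $\det B\equiv 1-(a+d) \pmod{2^\alpha}$. This treats every $\alpha$ and all three classes of $\Delta'_E$ in one computation, and recovers $|G_1^\alpha(\tau)|=8^{\alpha-1}$ without Lemma \ref{M2lifting}. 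It also gives the stronger fact that $\det B \bmod 8$ is exactly equidistributed on $(\Z/8\Z)^\times$ over $G_1^\alpha(\tau)$, so the vanishing follows just from nontriviality of $\chi_4$, $\chi_8$ and $\chi_8\chi_4$. The paper's coordinate-free involution requires a base-case-plus-lifting split. Its advantage is that the authors reuse the same technique for the identity class in Lemma \ref{Delta'E2case1}. There $I-B\equiv 0 \pmod 2$ has no unit entry, so your solve-for-an-entry parametrization is not available.
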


\begin{proof}
We first assume that $\Delta'_E \equiv 1 \pmod 4$. By \eqref{localcharacter}, for every $T \in G_1^\alpha(\tau)$, we have
$$\psi_{2^{\alpha}}(T) = \epsilon(\tau) =  -1.$$
Thus, $n_\alpha(\tau) = -|G_1^\alpha(\tau)|$. Observe that there is a one-to-one bijection between
$$G_1^\alpha(\tau) \longleftrightarrow \{B \in M_2(\Z/2^\alpha \Z) : B \equiv I-\tau \pmod {2}, \det B \equiv 0 \pmod {2^{\alpha}}\}$$
by $T \mapsto I-T$. Since $I - \tau$ is a non-zero rank 1 matrix over $\Z/2\Z$, it follows from Lemma \ref{M2lifting} that $|G_1^\alpha(\tau)| = 8^{\alpha-1}$.

Secondly, we assume that $\Delta'_E \equiv 3 \pmod 4$. We first argue the case $\alpha = 2$. Let us define linear functionals $L_1, L_2 : M_2(\Z/2\Z) \to \Z/2\Z$ by
\begin{equation}\label{L1L2}
    L_1(X) \coloneqq \Tr (\adj(I-\tau)X) \quad \text{ and } \quad L_2(X) \coloneqq \Tr (\adj(\tau)X).
\end{equation}
Fix a lift $T$ of $\tau$ modulo $4$. Then every lift of $\tau$ modulo $4$ can be written uniquely as $T+2X$ for some $X \in M_2(\Z/2\Z)$. By \eqref{eq:det_expand_2x2} we have
$$\det(I-(T+2X)) \equiv \det(I-T)+2L_1(X) \pmod 4.$$
Thus,
\begin{equation}\label{G12condition}
    (T+2X) \in G_1^2(\tau) \iff \det(I-T) \equiv 2L_1(X) \pmod 4.
\end{equation}
Also, we have
\begin{equation}\label{determinantcondition}
    \det(T+2X) \equiv \det T + 2L_2(X) \pmod 4.
\end{equation}

Since both $\adj(I-\tau)$ and $\adj(\tau)$ are non-zero and distinct, $L_1$ and $L_2$ are distinct linear functionals. Thus, one may choose $X_0 \in \ker(L_1) \setminus \ker(L_2)$. Consider a map
$$\iota : G_1^2(\tau) \to G_1^2(\tau), \quad \text{ where } \quad M \mapsto M+2X_0.$$
By \eqref{G12condition}, we see that the map is well-defined because $X_0 \in \ker L_1$, and hence $\iota$ defines an involution. Moreover, since $X_0 \not \in \ker L_2$, by \eqref{determinantcondition}, we have
$$\det(M+2X_0) \equiv \det M +  2 \pmod 4.$$
Thus, $\iota$ pairs elements of $G_1^2(\tau)$ with determinant classes $1$ and $-1$ modulo $4$, and hence
$$\#\{M \in G_1^2(\tau) : \det M \equiv 1 \pmod 4\} = \#\{M \in G_1^2(\tau) : \det M \equiv -1 \pmod 4\}.$$
Therefore, by \eqref{localcharacter}, we have
$$\sum_{M \in G_1^2(\tau)}\psi_4(M)
= \sum_{\substack{M \in G_1^2(\tau) \\ \det M\equiv 1 (4)}} (-1)
+ \sum_{\substack{M \in G_1^2(\tau) \\ \det M\equiv -1 (4)}} (+1)
= 0.$$

Now, let $\alpha > 2$. By \eqref{localcharacter}, $\psi_{2^\alpha}$ factors through reduction modulo 4, i.e., $\psi_{2^\alpha}(\widetilde{M}) = \psi_4(M)$, for any lift $\widetilde{M}$ of $M$ modulo $2^\alpha$. Moreover,
$$G_1^\alpha(\tau) = \bigsqcup_{M \in G_1^2(\tau)} G_2^\alpha(M).$$
Thus, by Lemma \ref{M2lifting}, we have
$$n_\alpha(\tau)
= \sum_{\widetilde{M}\in G_1^\alpha(\tau)} \psi_{2^\alpha}(\widetilde{M})
= \sum_{M \in G_1^2(\tau)} \sum_{\widetilde{M} \in G_2^\alpha(M)} \psi_{4}(M)
= 8^{\alpha-2} \sum_{M \in G_1^2(\tau)}\psi_{4}(M)
= 0.$$

Finally, we assume that $\Delta'_E \equiv 2 \pmod 4$. We define $L_1$ and $L_2$ as in \eqref{L1L2}. Again, consider $X_0 \in \ker(L_1) \setminus \ker(L_2)$. Fix a lift $\tau'$ of $\tau$ modulo $4$.  Fix a lift $T$ of $\tau'$ modulo $8$. Then every element in $G_2^3(\tau')$ can be uniquely expressed as $T + 4X$ for some $X \in M_2(\Z/2\Z)$. By the same argument as in the previous case, we have
$$(T+4X) \in G_2^3(\tau') \iff \det(I-T) \equiv 4 L_1(X) \pmod 8.$$
Also, we have
$$\det(T+4X) \equiv \det T + 4L_2(X) \pmod 8.$$
By the same argument as in the previous case, the map $G_2^3(\tau') \to G_2^3(\tau')$ given by $M \mapsto M+4X_0$ defines an involution and one can check that
$$\#\{M \in G_2^3(\tau') : \det M \equiv x \pmod 8\}
= \#\{M \in G_2^3(\tau') : \det M \equiv x +4\pmod 8\}$$
for any $x \in (\Z/8\Z)^\times$.

By \eqref{localcharacter}, for any $M \in G_2^3(\tau')$, we have
$$\chi_4(M+4X_0) =\chi_4(M) \quad \text{ and } \quad \chi_8(M+4X_0) = -\chi_8(M).$$
Thus,
$$n_3(\tau) = \sum_{\tau' \in G_1^2(\tau)} \sum_{M \in G_2^3(\tau')} \psi_8(M) = 0.$$
Now, let $\alpha > 3$. By \eqref{localcharacter}, $\psi_{2^\alpha}$ factors through reduction modulo $8$, i.e., $\psi_{2^\alpha}(\widetilde{M}) = \psi_8(M)$ for any lift $\widetilde{M}$ of $M$ modulo $2^\alpha$.  Moreover,
$$G_1^\alpha(\tau) = \bigsqcup_{M \in G_1^3(\tau)} G_3^\alpha(M).$$
Thus, by Lemma \ref{M2lifting}, we have
$$n_\alpha(\tau)
= \sum_{\widetilde{M} \in G_1^\alpha(\tau)} \psi_{2^\alpha}(\widetilde{M})
= \sum_{M \in G_1^3(\tau)} \sum_{\widetilde{M} \in G_3^\alpha(M)} \chi_8(M)
= 8^{\alpha-3} \sum_{M \in G_1^3(\tau)}\psi_8(M)
= 0.$$
This completes the proof.
\end{proof}
It suffices to compute $n_\alpha(I)$ for each case.
\begin{lemma}\label{G1alphaI}
    For $\alpha \geq 1$, we have
    $$|G_1^\alpha(I)| = 3\cdot 2^{3(\alpha-1)} - 2^{2\alpha-1}.$$
\end{lemma}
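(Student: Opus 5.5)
The count is the number of $B\in\GL_2(\Z/2^\alpha\Z)$ with $B\equiv I \pmod 2$ and $\det(I-B)\equiv 0 \pmod{2^\alpha}$. Since $B\equiv I\pmod 2$ forces $\det B$ to be odd, the invertibility condition is automatic. The substitution $X=I-B$ then gives a bijection
$$G_1^\alpha(I)\longleftrightarrow \{X\in M_2(\Z/2^\alpha\Z): X\equiv 0 \pmod 2,\ \det X\equiv 0\pmod{2^\alpha}\}=V_{\alpha,0}.$$
So the plan is to compute $|V_{\alpha,0}|$ using Lemma \ref{Fixageq2}.

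\textbf{Step 1: express $|V_{\alpha,0}|$ through $|V_{\alpha-2}|$.} For $\alpha\geq 2$, parts (iv) and (i) of Lemma \ref{Fixageq2} give $|V_{\alpha,0}|=|W_{\alpha-1}|$. If $\alpha-1\geq 2$, this further equals $16|V_{\alpha-2}|$.

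\textbf{Step 2: find a closed form for $|V_\beta|$.} Combining the partition $V_\beta=V_{\beta,0}\sqcup V_{\beta,*}$ with parts (ii), (iv) and (i) gives the recursion
$$|V_\beta|=9\cdot 2^{3\beta-3}+16|V_{\beta-2}|\qquad(\beta\geq 3).$$
For the base cases:
\begin{itemize}
\item $|V_0|=1$ (the zero ring).
\item $|V_1|=10$: there are $16-6$ singular matrices over $\F_2$.
\item $|V_2|=|V_{2,*}|+|V_{2,0}|=72+|W_1|=72+16=88$.
\end{itemize}
I would then guess and verify by induction that
$$|V_\beta|=3\cdot 2^{3\beta}-2^{2\beta+1}\cdot c_\beta,$$
where the correction depends on parity. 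I will only fix the exact constants once the small cases are checked. The target formula predicts $|V_{\alpha,0}|=3\cdot 2^{3\alpha-3}-2^{2\alpha-1}$, which suggests
$$|V_{\alpha-2}|=\tfrac{1}{16}\bigl(3\cdot 2^{3\alpha-3}-2^{2\alpha-1}\bigr)=3\cdot 2^{3(\alpha-2)}-2^{2(\alpha-2)+1}\cdot\tfrac14\cdot 2\ldots$$
Rather than rely on this guess, I would solve the recursion directly: it is linear with a homogeneous part $16^{\beta/2}=4^\beta$ and a particular part proportional to $8^\beta$.

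\textbf{Step 3: the small cases directly.} The recursion in Step 1 needs $\alpha\geq 3$, so $\alpha=1$ and $\alpha=2$ are checked by hand.
\begin{itemize}
\item For $\alpha=1$: $V_{1,0}=\{0\}$, so $|G_1^1(I)|=1=3-2$.
\item For $\alpha=2$: $|V_{2,0}|=|W_1|=16=24-8$.
\end{itemize}
Both agree with $3\cdot 2^{3(\alpha-1)}-2^{2\alpha-1}$. For $\alpha\geq 3$, it then remains to verify
$$16|V_{\alpha-2}|=3\cdot 2^{3\alpha-3}-2^{2\alpha-1},\quad\text{i.e.}\quad |V_\beta|=3\cdot 2^{3\beta+3}\cdot 2^{-4}\cdot\ldots$$
Concretely, I would prove $|V_\beta|=\tfrac32\, 2^{3\beta}-\tfrac12\, 2^{2\beta}$ for $\beta\geq 0$.
\begin{itemize}
\item Check: $\beta=0$ gives $1$; $\beta=1$ gives $12-2=10$; $\beta=2$ gives $96-8=88$.
\item Induction step: $9\cdot 2^{3\beta-3}+16\bigl(\tfrac32\,2^{3\beta-6}-\tfrac12\,2^{2\beta-4}\bigr)=\tfrac{9}{8}2^{3\beta}+\tfrac{3}{8}2^{3\beta}-\tfrac12 2^{2\beta}$, as required.
\end{itemize}
Then $16|V_{\alpha-2}|=24\cdot 2^{3\alpha-6}-8\cdot 2^{2\alpha-4}=3\cdot2^{3\alpha-3}-2^{2\alpha-1}$, which is the claimed value.

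The main obstacle is only bookkeeping: making sure the range hypotheses of Lemma \ref{Fixageq2} ($\alpha\geq 2$ for each part, and the convention $W_1=M_2(\Z/2\Z)$) are respected, so that the base cases are handled separately and correctly.
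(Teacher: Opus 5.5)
Your proposal is correct and is essentially the paper's argument. Your identification $G_1^\alpha(I)\leftrightarrow V_{\alpha,0}$ followed by Lemma \ref{Fixageq2}(iv),(i) is the paper's bijection $G_1^\alpha(I)\to W_{\alpha-1}$. Your recursion $|V_\beta|=9\cdot 2^{3\beta-3}+16|V_{\beta-2}|$ is the paper's recursion $|G_1^\alpha(I)|=9\cdot 2^{3\alpha-5}+16|G_1^{\alpha-2}(I)|$ rewritten via $|G_1^\alpha(I)|=16|V_{\alpha-2}|$. The one difference is that you actually carry out the induction, through the closed form $|V_\beta|=\tfrac32\,2^{3\beta}-\tfrac12\,2^{2\beta}$ with base cases $\beta=1,2$, where the paper leaves it to the reader.

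In a write-up, delete the abandoned guesses: the ansatz $3\cdot 2^{3\beta}-2^{2\beta+1}c_\beta$, whose leading coefficient is off by a factor of $2$, and the displays trailing off in ``$\ldots$''. Your final closed form and its verification already suffice.
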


\begin{proof}
    The case $\alpha = 1$ is obvious. 

    Let $\alpha \geq 2$. Then any lift of $I \in \GL_2(\Z/2\Z)$ modulo $2^\alpha$ can be uniquely expressed as $I+2X$ for some $X \in M_2(\Z/2^{\alpha-1}\Z)$. Note that
        \begin{equation}\label{G1alphacondition}
        I + 2X \in G_1^\alpha(I) \iff \det(I-(I+2X)) = \det (2X) \equiv 0 \pmod {2^\alpha} \iff \det X \equiv 0 \pmod {2^{\alpha-2}}.
    \end{equation}
    The condition on the right-hand side is trivial if $\alpha = 2$. Thus, every lift of $I$ modulo $4$ lies in $G_1^2(I)$, and hence $|G_1^2(I)| = 16$ which aligns with the formula for $\alpha = 2$.

    For $\alpha \geq 3$, we use induction. By \eqref{G1alphacondition}, we obtain an one-to-one correspondence $G_1^\alpha(I) \to W_{\alpha-1}$ by $I+2X \mapsto X$. Note that $|V_1| = |M_2(\Z/2\Z)| - |\GL_2(\Z/2\Z)| = 10$. Hence,
    $$|G_1^3(I)| = |W_2| = 16|V_1| = 160,$$
    which agrees with the formula.

    If $\alpha \geq 4$, we have
    \begin{align*}
        |G_1^\alpha(I)| = |W_{\alpha-1}| = 16|V_{\alpha-2}| &= 16(|V_{\alpha-2,*}| + |V_{\alpha-2,0}|) \\
        &=16 \cdot 9 \cdot 2^{3(\alpha-2)-3} + 16|W_{\alpha-3}| = 9 \cdot 2^{3\alpha-5} + 16|G_1^{\alpha-2}(I)|.
    \end{align*}
        Solving the recursive relation by induction, we obtain the desired result.
\end{proof}

\begin{lemma}\label{Delta'E1mod4}
    Let $E/\Q$ be a Serre curve with $\Delta'_E\equiv 1 \pmod 4$. Then
    $$|Y_{2^\alpha,+}| - |Y_{2^\alpha,-}| = -2^{2\alpha-1}.$$
\end{lemma}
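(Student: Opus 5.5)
We will use the decomposition \eqref{Yalphasum}, which gives
$$|Y_{2^\alpha,+}| - |Y_{2^\alpha,-}| = n_\alpha(I) + 3n_\alpha(\tau),$$
where $\tau$ is any fixed transposition in $\GL_2(\Z/2\Z)$. The 3-cycles contribute nothing, because $G_1^\alpha(\sigma)$ is empty. It therefore suffices to evaluate the two terms separately when $\Delta'_E \equiv 1 \pmod 4$.

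\textbf{The identity term.} In this case \eqref{localcharacter} says $\psi_{2^\alpha}(B) = \epsilon(B) = \epsilon(B_2)$, so $\psi_{2^\alpha}$ factors through reduction modulo $2$. Every $B \in G_1^\alpha(I)$ reduces to the identity, so $\psi_{2^\alpha}(B) = \epsilon(I) = +1$. Hence $n_\alpha(I) = |G_1^\alpha(I)|$, and Lemma \ref{G1alphaI} gives
$$n_\alpha(I) = 3\cdot 2^{3(\alpha-1)} - 2^{2\alpha-1}.$$

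\textbf{The transposition term.} Lemma \ref{nalphatau}, in the case $\Delta'_E \equiv 1 \pmod 4$, gives $n_\alpha(\tau) = -2^{3(\alpha-1)}$. Substituting both values,
$$|Y_{2^\alpha,+}| - |Y_{2^\alpha,-}| = 3\cdot 2^{3(\alpha-1)} - 2^{2\alpha-1} - 3\cdot 2^{3(\alpha-1)} = -2^{2\alpha-1},$$
which is the claim.

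There is essentially no obstacle here, since all the counting was done in Lemmas \ref{G1alphaI} and \ref{nalphatau}. The only points needing care are:
\begin{itemize}
\item checking that $\epsilon$ is trivial on every lift of $I$, which holds because $\epsilon$ is defined via reduction modulo $2$;
\item confirming that the formulas hold uniformly for all $\alpha \geq 1$. The constraint \eqref{conditiononalpha} only requires $\alpha \geq 1$ here. At $\alpha = 1$ both lemmas still apply: $|G_1^1(I)| = 1$ and $n_1(\tau) = -1$, giving $1 - 3 = -2$, which agrees with $-2^{1}$.
\end{itemize}
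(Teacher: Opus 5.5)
Your proposal is correct and matches the paper's proof: since $\psi_{2^\alpha}=\epsilon$ factors through reduction modulo $2$, every lift of $I$ has $\psi_{2^\alpha}=+1$, so $n_\alpha(I)=|G_1^\alpha(I)|$. Combining this with \eqref{Yalphasum}, Lemma \ref{G1alphaI}, and Lemma \ref{nalphatau} gives $-2^{2\alpha-1}$, and your explicit check at $\alpha=1$ is a harmless extra.
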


\begin{proof}
        If $\Delta'_E \equiv 1 \pmod 4$, then $\psi_{2^\alpha}$ factors through reduction modulo 2 by \eqref{localcharacter}, i.e.,        $\psi_{2^\alpha}(\widetilde{M}) = \epsilon(M)$ for any lift $\widetilde{M}$ of $M$ modulo $2^\alpha$. Thus, we have $n_\alpha(I)  = |G_1^\alpha(I)|$. The results follows from \eqref{Yalphasum}, Lemma \ref{nalphatau}, and Lemma \ref{G1alphaI}.
\end{proof}

\begin{lemma}\label{Delta'E3mod4}
    Let $E/\Q$ be a Serre curve with $\Delta'_E\equiv 3 \pmod 4$. Then
     $$|Y_{2^\alpha,+}| - |Y_{2^\alpha,-}| =  \begin{cases}
            -2^{2\alpha-1} & \text{ if } \alpha \geq 3,\\
            0 & \text{ if } \alpha = 2.
        \end{cases}$$
\end{lemma}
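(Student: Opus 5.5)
By \eqref{Yalphasum} we have $|Y_{2^\alpha,+}|-|Y_{2^\alpha,-}| = n_\alpha(I)+3n_\alpha(\tau)$. By Lemma \ref{nalphatau}, $n_\alpha(\tau)=0$ when $\Delta'_E\equiv 3 \pmod 4$. So everything reduces to computing $n_\alpha(I)$. Here $\alpha\ge 2$ by \eqref{conditiononalpha}, and $\psi_{2^\alpha}=\chi_4\epsilon$. Every $B\in G_1^\alpha(I)$ reduces to $I$ modulo $2$, so $\epsilon(B)=1$ and
$$n_\alpha(I)=\sum_{B\in G_1^\alpha(I)}\chi_4(B).$$

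As in the proof of Lemma \ref{G1alphaI}, I write $B=I+2X$ with $X\in M_2(\Z/2^{\alpha-1}\Z)$. The condition $B\in G_1^\alpha(I)$ becomes $\det X\equiv 0 \pmod{2^{\alpha-2}}$. Expanding the determinant gives
$$\det(I+2X)=1+2\Tr X+4\det X\equiv 1+2\Tr X \pmod 4,$$
so $\chi_4(B)=(-1)^{\Tr X}$, where the sign depends only on $X \bmod 2$. Hence
$$n_\alpha(I)=|G_1^\alpha(I)|-2N_\alpha,$$
where $N_\alpha$ is the number of admissible $X$ with odd trace.

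\emph{Case $\alpha=2$.} The determinant condition is vacuous, so $X$ ranges over all of $M_2(\Z/2\Z)$. Exactly half of these matrices have odd trace, so $n_2(I)=16-2\cdot 8=0$.

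\emph{Case $\alpha\ge 3$.} The condition forces $\det X\equiv 0\pmod 2$, so $\bar X = X \bmod 2$ must be singular.
\begin{itemize}
\item Since the trace must be odd, $\bar X\neq 0$, so $\bar X$ is a nonzero rank-one matrix with odd trace. Solving $a+d=1$ and $ad=bc=0$ over $\Z/2\Z$ shows there are exactly $6$ such matrices.
\item For a fixed nonzero rank-one $\bar X$, I count its lifts $X$ modulo $2^{\alpha-1}$ with $\det X\equiv 0\pmod{2^{\alpha-2}}$ in two steps. First, the lifts modulo $2^{\alpha-2}$ with vanishing determinant number $8^{\alpha-3}$ by the second statement of Lemma \ref{M2lifting}; for $\alpha=3$ this count is trivially $1$. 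Second, lifting from $2^{\alpha-2}$ to $2^{\alpha-1}$ is unconstrained, since the condition only involves the determinant modulo $2^{\alpha-2}$, and gives $16$ choices each. This is the same fibre count used in Lemma \ref{Fixageq2}(i).
\end{itemize}
Therefore $N_\alpha=6\cdot 16\cdot 8^{\alpha-3}=3\cdot 2^{3\alpha-4}$.

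Combining this with Lemma \ref{G1alphaI} gives
$$n_\alpha(I)=\bigl(3\cdot 2^{3\alpha-3}-2^{2\alpha-1}\bigr)-3\cdot 2^{3\alpha-3}=-2^{2\alpha-1},$$
which is the claimed formula.

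The main point needing care is the lift count for a fixed rank-one residue. One has to be sure the two steps are applied correctly: the exact-zero lift of Lemma \ref{M2lifting} only up to level $2^{\alpha-2}$, and then the unconstrained top layer. Beyond that bookkeeping, the proof is a direct sign count.
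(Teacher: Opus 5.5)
Your proposal is correct and follows essentially the paper's argument: you reduce to $n_\alpha(I)$ via Lemma \ref{nalphatau}, write the lifts as $I+2X$ with sign $(-1)^{\Tr X}$, and for $\alpha\ge 3$ count odd-trace $X$ over the six odd-trace rank-one residues, each having $16\cdot 8^{\alpha-3}=2^{3\alpha-5}$ admissible lifts, before comparing with $|G_1^\alpha(I)|$. The paper instead splits the sum over $W_{\alpha-1,0}$ and $W_{\alpha-1,*}$ rather than writing $|G_1^\alpha(I)|-2N_\alpha$, which is the same bookkeeping; and your fibre count $16\cdot 8^{\alpha-3}$ is the right one (the paper's text says $8^{\alpha-2}$ but then computes with $2^{3\alpha-5}$, which agrees with yours).
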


\begin{proof}
    By \eqref{conditiononalpha} and Lemma \ref{nalphatau}, it suffices to compute $n_\alpha(I)$ for $\alpha \geq 2$. Any lift of $I \in \GL_2(\Z/2\Z)$ modulo $2^\alpha$ can be uniquely expressed as $I+2X$ for some $X \in M_2(\Z/2^{\alpha-1}\Z)$. By \eqref{eq:det_expand_2x2}, we have 
    $$\det(I+2X) \equiv 1 + 2 \Tr X \pmod 4,$$
    and hence, by the definition of the character, we have
    \begin{equation}\label{mod4localcharacter}
        \psi_{2^\alpha}(I+2X) = \psi_4(I+2X \neghs \pmod 4) = \chi_4(I+2X \neghs \pmod 4) = (-1)^{\Tr X \neghs \pmod 2}.
    \end{equation}
    
    For $\alpha = 2$, as proven in Lemma \ref{G1alphaI}, $I+2X \in G_1^2(I)$ for all $X \in M_2(\Z/2\Z)$. Since there are equal numbers of matrices with trace 0 and trace 1 in $M_2(\Z/2\Z)$, we obtain:
    $$n_2(I) = \sum_{M \in G_1^2(I)} \psi_4(M) = \sum_{X \in M_2(\Z/2\Z)} (-1)^{\Tr X \neghs \pmod 2} = 8 \cdot (1) + 8 \cdot (-1) = 0.$$

    For $\alpha \geq 3$, recall the map $G_1^\alpha(I) \to W_{\alpha-1}$ given by $I+2X \mapsto X$ from the proof of Lemma \ref{G1alphaI}. Using the partition in \eqref{WalphaValphapartitioned}, we rewrite $n_\alpha(I)$ as:
    \begin{equation}\label{nalphaI}
        n_\alpha(I) = \sum_{X \in W_{\alpha-1}} (-1)^{\Tr X \neghs \pmod 2} = \sum_{X \in W_{\alpha-1,0}} (-1)^{\Tr X \neghs \pmod 2} + \sum_{X \in W_{\alpha-1,*}} (-1)^{\Tr X \neghs \pmod 2}.
    \end{equation}
    For $X \in W_{\alpha-1,0}$, we have $X \equiv 0 \pmod 2$, which implies $\Tr X \equiv 0 \pmod 2$. Thus, the first sum in \eqref{nalphaI} is equal to $|W_{\alpha-1}|$. We proved that $|G_1^\alpha(I)| = |W_{\alpha-1}|$ in the proof of Lemma \ref{G1alphaI}. Thus, by Lemma \ref{Fixageq2} and Lemma \ref{G1alphaI}, the cardinality of $W_{\alpha-1,0}$ is given by:
    $$|W_{\alpha-1,0}| = |W_{\alpha-1}| - |W_{\alpha-1,*}| = (3 \cdot 2^{3(\alpha-1)} - 2^{2\alpha-1}) - 9 \cdot 2^{3\alpha-5} = 3 \cdot 2^{3\alpha-5} - 2^{2\alpha-1}.$$
    
    For $X \in W_{\alpha-1,*}$, $X \pmod 2$ is a non-zero singular matrix. There are nine non-zero singular matrices over $\Z/2\Z$ where three of them have trace 0 and six have trace 1. By Lemma \ref{M2lifting}, each of the nine non-zero singular matrices over $\Z/2\Z$ has $8^{\alpha-2}$ lifts in $W_{\alpha-1,*}$. Thus, the second sum in \eqref{nalphaI} becomes:
    $$\sum_{X \in W_{\alpha-1,*}} (-1)^{\Tr X \neghs \pmod 2} = 3 \cdot 2^{3\alpha-5} \cdot (1) + 6 \cdot 2^{3\alpha-5} \cdot (-1) = -3 \cdot 2^{3\alpha-5}.$$
    
    Combining these two sums, we have:
    $$|Y_{2^\alpha,+}| - |Y_{2^\alpha,-}| = n_\alpha(I) = (3 \cdot 2^{3\alpha-5} - 2^{2\alpha-1}) - 3 \cdot 2^{3\alpha-5} = -2^{2\alpha-1},$$
    which completes the proof.
\end{proof}

It remains to treat the case $\Delta'_E \equiv 2 \pmod 4$. We handle the cases $3\leq \alpha\leq 4$ and $\alpha\geq 5$ separately.

\begin{lemma}\label{Delta'E2case1}
    Let $E/\Q$ be a Serre curve with $\Delta'_E\equiv 2 \pmod 4$. For $3 \leq \alpha \leq 4$,
    $$|Y_{2^\alpha,+}| - |Y_{2^\alpha,-}| =0.$$
\end{lemma}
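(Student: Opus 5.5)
By \eqref{Yalphasum} and Lemma \ref{nalphatau}, which gives $n_\alpha(\tau)=0$ whenever $\Delta'_E\equiv 2\pmod 4$, it suffices to prove $n_\alpha(I)=0$ for $\alpha\in\{3,4\}$. As in the proof of Lemma \ref{G1alphaI}, every element of $G_1^\alpha(I)$ is uniquely $I+2X$ with $X\in W_{\alpha-1}$.

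\textbf{Step 1: reduce $\psi_{2^\alpha}(I+2X)$ to data about $X$.} Since $\epsilon(I+2X)=\epsilon(I)=1$, only the factors $\chi_8$ and possibly $\chi_4$ of \eqref{localcharacter} matter. Both factor through reduction modulo $8$. By \eqref{eq:det_expand_2x2},
$$\det(I+2X)\equiv 1+2\Tr X+4\det X \pmod 8.$$
Hence $\psi_{2^\alpha}(I+2X)$ depends only on the pair $(\Tr X \bmod 4,\ \det X \bmod 2)$. I would tabulate this sign explicitly in the two subcases $\Delta'_E\equiv 2\pmod 8$ (where $\psi=\chi_8$) and $\Delta'_E\equiv 6\pmod 8$ (where $\psi=\chi_8\chi_4$).

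\textbf{Step 2: the case $\alpha=4$.} Here $X\in W_3$, so $\det X\equiv 0\pmod 4$, and $\psi$ depends only on $t=\Tr X\bmod 4$. The determinant is then $1+2t \pmod 8$, giving the following signs.
\begin{itemize}
\item For $\chi_8$: $t=0\mapsto +$, $t=1\mapsto -$, $t=2\mapsto -$, $t=3\mapsto +$.
\item For $\chi_8\chi_4$: $t=0\mapsto +$, $t=1\mapsto +$, $t=2\mapsto -$, $t=3\mapsto -$.
\end{itemize}
So it suffices to show two balances: among $X\in W_3$ with $\Tr X$ odd, the classes $t=1$ and $t=3$ occur equally often; and among those with $\Tr X$ even, $t=0$ and $t=2$ occur equally often.

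I would prove this using the partition $W_3=W_{3,0}\sqcup W_{3,*}$ from \eqref{WalphaValphapartitioned}.
\begin{itemize}
\item For $W_{3,*}$, fiber over each of the nine nonzero singular matrices modulo $2$. By Lemma \ref{M2lifting} and the fibration $W_3\to V_2$ from Lemma \ref{Fixageq2}, I would exhibit an involution on each fiber that shifts the trace by $2$ modulo $4$ while preserving $\det\equiv 0\pmod 4$. The natural candidate is $X\mapsto X+2Y$, where $Y\in M_2(\Z/2\Z)$ satisfies $\Tr(\adj(X)Y)\equiv 0$ and $\Tr Y\equiv 1$. Such $Y$ exists because the linear functionals $Y\mapsto \Tr(\adj(X)Y)$ and $\Tr$ are distinct when $X\bmod 2$ is nonzero singular, since then $\adj(X)\not\equiv I$. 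This is the same trick used in Lemma \ref{nalphatau}.
\item For $W_{3,0}$, write $X=2X'$. Then $\Tr X\equiv 2\Tr X'\pmod 4$, so only the parity of $\Tr X'$ matters. The condition on $X'$ reduces to one on $X'\bmod 2$ being trivial or a determinant condition. I would count directly that even and odd traces are equally frequent, e.g. via the involution $X'\mapsto X'+\mathrm{diag}(1,0)$ when it preserves the condition, or otherwise by explicit enumeration over $M_2(\Z/2\Z)$.
\end{itemize}

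\textbf{Step 3: the case $\alpha=3$.} Here $X\in W_2$, i.e.\ $X\in M_2(\Z/4\Z)$ with $\det X\equiv 0\pmod 2$. Now $\det X\bmod 2=0$ automatically, so the same sign table as in Step 2 applies, with $t=\Tr X\bmod 4$. I would run the identical decomposition $W_2=W_{2,0}\sqcup W_{2,*}$, using $|W_{2,*}|=9\cdot 2^4$ from Lemma \ref{Fixageq2}. On $W_{2,*}$ the same involution $X\mapsto X+2Y$ shifts the trace by $2$ modulo $4$. On $W_{2,0}$, $X=2X'$ with $X'\in M_2(\Z/2\Z)$ arbitrary, and exactly $8$ of the $16$ choices have odd $\Tr X'$. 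This gives the balance directly.

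\textbf{Main obstacle.} I expect the delicate point to be Step 2 on $W_{3,*}$. There one must check that the shift $X\mapsto X+2Y$ preserves $\det X\equiv 0\pmod 4$, which requires $\det(X+2Y)\equiv \det X+2\Tr(\adj(X)Y)\pmod 4$. One must also confirm that a suitable $Y$ exists uniformly over all nine mod-$2$ classes, including the three trace-$0$ nilpotent-type classes, where $\adj(X)\bmod 2$ must be checked to differ from $I$. If that fails for some class, I would fall back on counting that class's $8$ lifts by hand.
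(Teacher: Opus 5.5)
Your proposal is correct and is essentially the paper's proof. You reduce to $n_\alpha(I)$ via \eqref{Yalphasum} and Lemma \ref{nalphatau}, and note that $\psi_{2^\alpha}(I+2X)$ depends only on $\Tr X \bmod 4$. For $\alpha=4$ you split $W_3=W_{3,0}\sqcup W_{3,*}$, using trace-parity balance on $W_{3,0}$ and translation by $2Y$ with $Y\in\ker L\setminus\ker T$ on $W_{3,*}$; this is exactly the argument of Lemma \ref{V2startracecount}, and your worry about $\adj(X)\not\equiv I$ is moot since $\det\adj(X_0)=\det X_0=0$. For $\alpha=3$ the paper applies the single map $M\mapsto M+4X_0$, i.e.\ $X\mapsto X+2X_0$, to all of $W_2$ at once, so your split into $W_{2,0}$ and $W_{2,*}$ is unnecessary but harmless.
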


\begin{proof}
Recall that any element in $G_1^\alpha(I)$ can be expressed as $I+2X$ for some $X \in W_{\alpha-1}$ as argued in the proof of Lemma \ref{G1alphaI}.

We first prove the case $\alpha = 3$. Fix $X_0 \in M_2(\Z/2\Z)$ with an odd trace and define an map 
$$\iota : G_1^3(I) \to G_1^3(I) \quad \text{ where } \quad \iota(M) = M+4X_0.$$
Let us verify that the map is well-defined. For any $M = I+2X \in G_1^3(I)$, by \eqref{eq:det_expand_2x2}, we have
$$\det(I-\iota(M)) = \det(-2X-4X_0) = 4\det(X+2X_0) \equiv 4 \det(X) \equiv 0 \pmod 8,$$
since $X \in W_2$. Thus, $\iota$ is a well-defined involution.

Moreover, since $M = I+2X \equiv I \pmod 2$, we have $\adj(M) \equiv I \pmod 2$. Thus, by \eqref{eq:det_expand_2x2}, 
$$\det(\iota(M)) = \det(M+4X_0) \equiv \det M + 4 \Tr(\adj(M)X_0) \equiv \det M + 4 \pmod 8.$$
Hence
$$\chi_4(\iota(M))=\chi_4(M) \qquad \text{ and } \qquad \chi_8(\iota(M))=-\chi_8(M).$$
By \eqref{localcharacter}, in both cases $\Delta'_E \equiv \pm 2 \pmod 8$, the involution $\iota$ switches the value of $\psi_8$. Thus, we have
$$\#\{M \in G_1^3(I) : \psi_8(M)=1\} = \#\{M \in G_1^3(I) : \psi_8(M) = -1\}.$$
Therefore, by \eqref{Yalphasum} and Lemma \ref{nalphatau}, we have
$$|Y_{8,+}| -|Y_{8,-}| = n_3(I) = \sum_{M \in G_1^3(I)} \psi_8(M) = 0.$$
This proves the case $\alpha = 3$.

Now, let us prove the case $\alpha=4$. Using the partition in \eqref{WalphaValphapartitioned}, we can express $n_4(I)$ as follows:
\begin{equation}\label{n4I}
    n_4(I) = \sum_{X \in W_{3,0}} \psi_{16}(I+2X) + \sum_{X \in W_{3,*}} \psi_{16}(I+2X).
\end{equation}
We first compute the first sum in \eqref{n4I}. From the definition of $W_{3,0}$, one can check that
$$M_2(\Z/4\Z) \to W_{3,0} \quad \text{ where } \quad Z\mapsto 2Z$$
is well-defined and bijective. Thus, any matrix $I+2X \in G_1^4(I)$ with $X \in W_{3,0}$ can be uniquely expressed as $I+4Z$ for some $Z \in M_2(\Z/4\Z)$. By \eqref{eq:det_expand_2x2}, we have $\det(I+4Z) \equiv 1 + 4 \Tr Z \pmod 8$. Therefore,
$$\chi_4(I+4Z) = \chi_4(I) = 1 \quad \text{ and } \quad \chi_8(I+4Z) = (-1)^{\Tr Z \neghs \pmod 2}.$$
By \eqref{localcharacter}, in both cases $\Delta'_E \equiv \pm 2 \pmod 8$, we have
$$\psi_{16}(I+4Z) = (-1)^{\Tr Z \neghs \pmod 2}.$$
Since there are equal numbers of odd and even traces in $M_2(\Z/4\Z),$ we have
$$\sum_{X \in W_{3,0}} \psi_{16}(I+2X)= 0.$$

Now, we compute the second sum in \eqref{n4I}. As shown in the proof of Lemma \ref{Fixageq2}, the natural projection defines a surjective map $\pi: W_{3,*} \to V_{2,*}$ and each fiber has size 16.

Fix $A\in V_{2,*}$, and choose a lift $\widetilde A$ of $A$ modulo $8$. Then the fiber over $A$ is given by
$$\pi^{-1}(A)=\{\widetilde A+4Z: Z\in M_2(\Z/2\Z)\}.$$
Thus, for any $X = \widetilde{A}+4Z \in \pi^{-1}(A)$, we have
$$I+2X = I+2(\widetilde{A}+4Z) \equiv I+2\widetilde{A} \pmod 8.$$
Since $\psi_{16}$ factors through reduction modulo $8$, $\psi_{16}(I+2(\widetilde{A}+4Z))$ is independent of the choice of $Z$. In particular, $\psi_{16}(I+2X)$ is constant on each fiber of $\pi$. Therefore, we may define
$$\theta:V_{2,*}\to \{\pm1\} \qquad \text{where} \qquad \theta(A)\coloneqq \psi_{16}(I+2\widetilde A),$$
where $\widetilde{A}$ is any lift of $A$ modulo $8$. As we argued earlier, the map is well-defined. We have
$$\sum_{X\in W_{3,*}}\psi_{16}(I+2X)=16\sum_{A\in V_{2,*}}\theta(A).$$
Since $A \in V_{2,*}$, and hence $\det A\equiv0\pmod4$, by \eqref{eq:det_expand_2x2}, we have
$$\det(I+2\widetilde A) \equiv1+2\Tr A\pmod 8.$$
Thus, $\theta(A)$ depends only on $\Tr A \pmod 4$.
If $\Delta'_E \equiv 2 \pmod 8$, we have $\psi_{16} = \epsilon \chi_8$, and hence
$$\theta(A)=\begin{cases}
+1 & \text{if } \Tr A\equiv0,3\pmod4,\\
-1 & \text{if } \Tr A\equiv1,2\pmod4.
\end{cases}$$
If $\Delta'_E\equiv6\pmod8$, we have $\psi_{16}=\epsilon\chi_4\chi_8$, and hence
$$\theta(A) = \begin{cases}
+1 & \text{if } \Tr A\equiv0,1\pmod4,\\
-1 & \text{if } \Tr A\equiv2,3\pmod4.
\end{cases}$$
Either case, by Lemma \ref{V2startracecount}, we have
$$\sum_{A \in V_{2,*}} \theta(A) = 0.$$
This completes the proof.
\end{proof}

\begin{lemma}\label{V2startracecount}
Let
$$V_{2,*}=\{A\in M_2(\Z/4\Z): A\not\equiv 0 \pmod 2,\ \det A\equiv 0 \pmod 4\}.$$
Then
$$\#\{A \in V_{2,*} : \Tr A \equiv t \pmod 4\} = \begin{cases}
    12 & \text{ if } t\in \{0,2\}, \\
    24 & \text{ if } t \in \{1,3\}.
\end{cases}$$
\end{lemma}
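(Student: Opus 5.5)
The approach is to sort $V_{2,*}$ by the reduction of $A$ modulo $2$. The reductions that occur are exactly the nine nonzero singular matrices $a\in M_2(\Z/2\Z)$. Over each such $a$ I would count the lifts $A$ satisfying $\det A\equiv 0\pmod 4$, split according to $\Tr A \pmod 4$. As a consistency check, Lemma \ref{Fixageq2}(ii) with $\alpha=2$ gives $|V_{2,*}|=9\cdot 2^3=72$, which equals $12+24+12+24$.

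\textbf{Step 1: parametrize the fiber.} Fix $a$ and a lift $A_0\in M_2(\Z/4\Z)$ of $a$. Every lift has the form $A=A_0+2X$ with $X\in M_2(\Z/2\Z)$. By \eqref{eq:det_expand_2x2},
$$\det(A_0+2X)\equiv \det A_0 + 2\Tr(\adj(a)X)\pmod 4.$$
Since $\det a=0$, $\det A_0$ is even. So the condition $\det A\equiv 0\pmod 4$ says that $X$ lies on an affine hyperplane $\{L(X)=c\}$ in $M_2(\Z/2\Z)\simeq \F_2^4$, where $L(X)=\Tr(\adj(a)X)$. Because $a\neq 0$, also $\adj(a)\neq 0$, so $L$ is a nonzero functional and the hyperplane has exactly $8$ points, as in Lemma \ref{M2lifting}. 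Along the fiber the trace satisfies $\Tr A\equiv \Tr A_0 + 2\Tr X\pmod 4$. Hence the class of $\Tr A$ modulo $4$ is determined by the fixed value $\Tr A_0$ together with the bit $\Tr X=\Tr(IX)\in\F_2$.

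\textbf{Step 2: the two functionals are independent (the key point).} I need $X\mapsto \Tr X$ and $L$ to be distinct nonzero functionals on $\F_2^4$. This holds because $\adj(a)$ is singular, since $a$ is singular of rank $1$ in dimension $2$, whereas $I$ is invertible; therefore $\adj(a)\neq I$. Over $\F_2$, two distinct nonzero functionals are linearly independent. It follows that on the affine hyperplane $\{L=c\}$ the functional $\Tr X$ takes the values $0$ and $1$ exactly $4$ times each. So each $a$ contributes exactly $4$ elements to the class $t\equiv \Tr A_0$ and $4$ to the class $t\equiv \Tr A_0+2$.

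\textbf{Step 3: count by trace parity.} Among the nine nonzero singular matrices over $\Z/2\Z$, three have trace $0$, namely $\begin{pmatrix}0&1\\0&0\end{pmatrix}$, $\begin{pmatrix}0&0\\1&0\end{pmatrix}$ and $\begin{pmatrix}1&1\\1&1\end{pmatrix}$, and the remaining six have trace $1$; this split was already used in Lemma \ref{Delta'E3mod4}.
\begin{itemize}
\item A trace-$0$ class $a$ contributes $4$ to $t=0$ and $4$ to $t=2$. This gives $3\cdot 4=12$ for each of $t\in\{0,2\}$.
\item A trace-$1$ class contributes $4$ to each of $t\in\{1,3\}$, in some order. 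This gives $6\cdot 4=24$ for each of $t\in\{1,3\}$.
\end{itemize}
These are the claimed counts.

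The only step needing care is Step 2. Once the determinant and trace conditions are seen as two independent linear conditions on $X$, the rest is bookkeeping.
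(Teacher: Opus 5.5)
Your proposal is correct and follows essentially the same route as the paper's proof: fiber over each of the nine nonzero singular matrices mod $2$, read the condition $\det A\equiv 0 \pmod 4$ as an affine hyperplane for the nonzero functional $X\mapsto \Tr(\adj(a)X)$, use that this functional is independent of $X\mapsto\Tr X$ to split each fiber $4+4$ between $\Tr A_0$ and $\Tr A_0+2$, and finish with the $3$/$6$ trace split. You also justify $\adj(a)\neq I$ (since $\adj(a)$ is singular), a step the paper states without proof.
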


\begin{proof}
Let $A_0 \in M_2(\Z/2\Z)$ be a nonzero singular matrix, and let $A$ be a lift of $A_0$ modulo $4$. We write any other lifts of $A_0$ as $A+2X$ for some $X \in M_2(\Z/2\Z)$. Define linear functionals $L$ and $T$ on $M_2(\Z/2\Z)$ as follows:
$$L(X)=\Tr(\adj(A_0)X)\qquad \text{ and } \qquad T(X)=\Tr X$$
Since $A_0$ has rank $1$, we have $\adj(A_0)\neq 0$, and hence $L$ is nonzero. By \eqref{eq:det_expand_2x2},
$$\det(A+2X)\equiv \det A+2L(X)\pmod 4.$$
By Lemma \ref{M2lifting}, exactly eight choices of $X$ satisfy $A+2X \in V_{2,*}$.

Since $A_0$ is a nonzero singular matrix, $\adj(A_0) \neq I$, and hence $L$ and $T$ are distinct. Therefore, on each fiber of $L$, the functional $T$ takes the values $0$ and $1$ equally often. It follows that these eight lifts split evenly between
the two trace classes $\Tr A$ and $\Tr A + 2$ modulo $4$.

Finally, among the nonzero singular matrices in $M_2(\Z/2\Z)$, exactly $3$ have trace $0$ and exactly $6$ have trace $1$. Therefore the trace classes $0$ and $2$ modulo $4$ each receive $3\cdot 4=12$ lifts, while the trace classes $1$ and $3$ modulo $4$ each receive $6\cdot 4=24$ lifts. This proves the claim.
\end{proof}

\begin{lemma}\label{Delta'E2case2}
    Let $E/\Q$ be a Serre curve with $\Delta'_E\equiv 2 \pmod 4$. If $\alpha \geq 5$, then
$$|Y_{2^\alpha,+}| - |Y_{2^\alpha,-}| = -2^{2\alpha-1}.$$
\end{lemma}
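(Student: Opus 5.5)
By \eqref{Yalphasum} and Lemma \ref{nalphatau}, the transposition classes contribute nothing when $\Delta'_E \equiv 2 \pmod 4$. So it suffices to show $n_\alpha(I) = -2^{2\alpha-1}$ for $\alpha \geq 5$. As in the proof of Lemma \ref{G1alphaI}, I write each element of $G_1^\alpha(I)$ as $I+2X$ with $X \in W_{\alpha-1}$. I then split the sum according to the partition \eqref{WalphaValphapartitioned}:
$$n_\alpha(I) = \sum_{X \in W_{\alpha-1,0}} \psi_{2^\alpha}(I+2X) + \sum_{X\in W_{\alpha-1,*}} \psi_{2^\alpha}(I+2X).$$
Throughout I use that $\epsilon(I+2X)=1$, and that $\psi_{2^\alpha}$ factors through reduction modulo $8$ and depends only on $\det(I+2X) \bmod 8$.

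\emph{Nonzero part.} For $X \in W_{\alpha-1,*}$ we have $\det X \equiv 0 \pmod 4$, because $\alpha-2 \geq 3$. Hence \eqref{eq:det_expand_2x2} gives $\det(I+2X) \equiv 1 + 2\Tr X \pmod 8$. So $\psi_{2^\alpha}(I+2X)$ depends only on $\Tr X \bmod 4$, through the same function $\theta$ as in the proof of Lemma \ref{Delta'E2case1}. The reduction $W_{\alpha-1,*} \to V_{2,*}$ modulo $4$ is surjective with fibers of equal size $|W_{\alpha-1,*}|/|V_{2,*}| = 9\cdot 2^{3\alpha-5}/72 = 2^{3\alpha-8}$. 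This follows from Lemma \ref{M2lifting}, since each nonzero singular class mod $2$ has the same number of lifts at every level; I will need to check that the lifts are equidistributed over the mod $4$ classes, which the same lemma's linear-functional argument gives. By Lemma \ref{V2startracecount}, $\sum_{A \in V_{2,*}} \theta(A) = 0$ in both cases $\Delta'_E \equiv \pm 2 \pmod 8$. So this part vanishes.

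\emph{Zero part.} Write $X = 2Z$ with $Z \in W_{\alpha-2}$, using the bijection from Lemma \ref{Fixageq2}(iv), i.e. $W_{\alpha-1,0} \leftrightarrow V_{\alpha-1,0} \leftrightarrow$ matrices with $\det Z \equiv 0 \pmod{2^{\alpha-3}}$. This gives $\det(I+4Z) \equiv 1 + 4\Tr Z \pmod 8$, so $\chi_4 = 1$, $\chi_8 = (-1)^{\Tr Z}$, and the summand is $(-1)^{\Tr Z \bmod 2}$ in both subcases. The sum over $Z$ is then exactly the sum computed in the proof of Lemma \ref{Delta'E3mod4}, with the index shifted. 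For $Z \equiv 0 \pmod 2$ every term is $+1$. For $Z \not\equiv 0 \pmod 2$ the reductions mod $2$ run over the nine nonzero singular matrices with equal multiplicity, three of trace $0$ and six of trace $1$.

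\emph{Bookkeeping.} The main obstacle is getting the indices right, because the set of $Z$ is $W_{\alpha-2}$ viewed inside $M_2(\Z/2^{\alpha-2}\Z)$. I would redo the count directly. The zero part of $Z$ has size $|W_{\alpha-2}| - |W_{\alpha-2,*}|$, computed from Lemmas \ref{Fixageq2} and \ref{G1alphaI}, which give $|W_{\beta}| = |G_1^{\beta+1}(I)| = 3\cdot 2^{3\beta} - 2^{2\beta+1}$. The nonzero part contributes $(3-6)\cdot|W_{\alpha-2,*}|/9 = -3\cdot 2^{3\alpha-8}$. With $|W_{\alpha-2,*}| = 9\cdot 2^{3\alpha-8}$, the zero part is $3\cdot 2^{3\alpha-6} - 2^{2\alpha-3} - 9 \cdot 2^{3\alpha-8}$. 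Adding $-3\cdot 2^{3\alpha-8}$ gives $3\cdot 2^{3\alpha-6} - 12\cdot 2^{3\alpha-8} - 2^{2\alpha-3} = -2^{2\alpha-3}$.

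This sum is over $Z$ taken modulo $2^{\alpha-2}$, while $X = 2Z$ is determined modulo $2^{\alpha-1}$. The correspondence $X \leftrightarrow Z$ therefore needs care: $Z$ is determined only modulo $2^{\alpha-2}$ and the determinant condition lives mod $2^{\alpha-3}$. If the parametrization is really by $Z \in M_2(\Z/2^{\alpha-2}\Z)$ with $\det Z \equiv 0 \pmod{2^{\alpha-3}}$, that set is $W_{\alpha-2}$ and the total is $-2^{2\alpha-3}$, not $-2^{2\alpha-1}$. I would therefore recheck the normalization: whether $n_\alpha(I)$ is indexed over $X \bmod 2^{\alpha-1}$, and whether a factor of $4$ comes from the fiber count. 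I expect the final reconciliation to give $-2^{2\alpha-1}$, and pinning down this multiplicity is the step I would verify most carefully.
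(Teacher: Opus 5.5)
\textbf{There is a genuine gap in the zero part.} Your decomposition matches the paper's. You drop the transposition classes by Lemma \ref{nalphatau}, split $W_{\alpha-1}=W_{\alpha-1,0}\sqcup W_{\alpha-1,*}$, kill the nonzero part with Lemma \ref{V2startracecount}, and reduce the zero part to the trace-parity sum of Lemma \ref{Delta'E3mod4}. Your treatment of $W_{\alpha-1,*}$ is correct. All fibers of $W_{\alpha-1,*}\to V_{2,*}$ have size $16\cdot 8^{\alpha-4}=2^{3\alpha-8}$, by Lemma \ref{M2lifting} applied level by level together with the $16$-to-$1$ map $W_{\alpha-1,*}\to V_{\alpha-2,*}$. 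The problem is the zero part, and it is exactly the source of the factor $4$ you could not account for. As written, your argument evaluates $n_\alpha(I)$ to $-2^{2\alpha-3}$, so it does not prove the lemma.

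The mistake is the parametrization of $W_{\alpha-1,0}$.
\begin{itemize}
\item $W_{\alpha-1,0}$ is not in bijection with $V_{\alpha-1,0}$; in fact $V_{\alpha-1,0}\subsetneq W_{\alpha-1,0}$. Lemma \ref{Fixageq2}(iv) identifies $V_{\beta,0}$ with $W_{\beta-1}$, which says nothing about $W_{\beta,0}$.
\item Write $X=2Z$ with $Z\in M_2(\Z/2^{\alpha-2}\Z)$, so $\det X=4\det Z$. Then the condition $\det X\equiv 0\pmod{2^{\alpha-2}}$ becomes $\det Z\equiv 0\pmod{2^{\alpha-4}}$, not $\pmod{2^{\alpha-3}}$. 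As a check at $\alpha=4$: in the proof of Lemma \ref{Delta'E2case1}, $Z\mapsto 2Z$ is a bijection from all of $M_2(\Z/4\Z)$ onto $W_{3,0}$.
\item So the correct $Z$-set is the full preimage of $W_{\alpha-3}$ under reduction $M_2(\Z/2^{\alpha-2}\Z)\to M_2(\Z/2^{\alpha-3}\Z)$, a $16$-to-$1$ cover. Its size is $16|W_{\alpha-3}|=3\cdot 2^{3\alpha-5}-2^{2\alpha-1}=|W_{\alpha-1}|-|W_{\alpha-1,*}|$. By contrast, $|W_{\alpha-2}|=3\cdot 2^{3\alpha-6}-2^{2\alpha-3}$ is strictly smaller.
\item Since $(-1)^{\Tr Z}$ depends only on $Z\bmod 2$, the zero part equals $16\sum_{B\in W_{\alpha-3}}(-1)^{\Tr B}=16\cdot(-2^{2\alpha-5})=-2^{2\alpha-1}$.
\item This uses the count from Lemma \ref{Delta'E3mod4}, $\sum_{B\in W_\beta}(-1)^{\Tr B}=-2^{2\beta+1}$, which holds only for $\beta\ge 2$. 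This is exactly where $\alpha\ge 5$ is needed. For $\beta=1$ the sum is $0$, consistent with Lemma \ref{Delta'E2case1} at $\alpha=4$.
\end{itemize}
The paper organizes the same computation differently: it first projects $W_{\alpha-1}\to V_{\alpha-2}$ ($16$-to-$1$), then applies Lemma \ref{Fixageq2}(iv) to $V_{\alpha-2,0}\leftrightarrow W_{\alpha-3}$. That route avoids the index slip.
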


\begin{proof}
Recall that any element in $G_1^\alpha(I)$ can be expressed as $I+2X$ for some $X \in W_{\alpha-1}$ as argued in the proof of Lemma \ref{G1alphaI}. Since $\alpha\geq 5$, for every $X\in W_{\alpha-1}$, we have $\det X \equiv 0 \pmod 8$, since $\alpha -2 \geq 3$. By \eqref{eq:det_expand_2x2}, we have
$$\det(I+2X) = 1+2\Tr X+4\det X \equiv 1+2\Tr X \pmod 8.$$
Thus, as argued in the proof of Lemma \ref{Delta'E2case1}, the value $\psi_{2^\alpha}(I+2X)$ depends only on $\Tr X \pmod 4$. Define
$$S_\alpha \coloneqq \sum_{X\in W_{\alpha-1}}\psi_{2^\alpha}(I+2X).$$
It suffices to prove that $S_\alpha = -2^{2\alpha-1}$.

As shown in the proof of Lemma \ref{Fixageq2}, the natural projection defines a surjective map $W_{\alpha-1} \to V_{\alpha-2}$ of fiber size $16$. Note that if $X\in W_{\alpha-1}$ maps to $A\in V_{\alpha-2}$, then $I+2X$ reduces to $I+2A$ modulo $2^{\alpha-1}$, and hence
$$\psi_{2^\alpha}(I+2X)=\psi_{2^{\alpha-1}}(I+2A),$$
since both characters factor through the same reduction modulo $8$. Using \eqref{WalphaValphapartitioned}, we have
\begin{equation}\label{Salphasum}
    S_\alpha = 16\sum_{A\in V_{\alpha-2}}\psi_{2^{\alpha-1}}(I+2A) = 16\sum_{A\in V_{\alpha-2,*}}\psi_{2^{\alpha-1}}(I+2A) + 16\sum_{A\in V_{\alpha-2,0}}\psi_{2^{\alpha-1}}(I+2A).
\end{equation}

Let us compute the first sum in \eqref{Salphasum}. The reduction modulo $4$ maps $V_{\alpha-2,*}$ onto $V_{2,*}$, and each fiber has size $8^{\alpha-4}$ by Lemma \ref{M2lifting}. Since $\psi_{2^{\alpha-1}}(I+2A)$ depends only on $\Tr A \pmod 4$, Lemma \ref{V2startracecount} gives
$$\sum_{A\in V_{\alpha-2,*}}\psi_{2^{\alpha-1}}(I+2A)=0.$$

Now we compute the second sum in \eqref{Salphasum}. Let $A \in V_{\alpha-2,0}$. Note that $A$ can be expressed uniquely as $2B$ for some $B \in W_{\alpha-3}$. By \eqref{eq:det_expand_2x2}, we have
$$\det(I+2A) = \det(I+4B) \equiv 1 + 4 \Tr B \pmod 8.$$
Therefore, in both cases $\Delta'_E\equiv \pm 2 \pmod8$, we have 
$$\psi_{2^{\alpha-1}}(I+4B)=(-1)^{\Tr B \neghs \pmod 2},$$
and hence
$$\sum_{A\in V_{\alpha-2,0}}\psi_{2^{\alpha-1}}(I+2A)=\sum_{B\in W_{\alpha-3}}(-1)^{\Tr B \neghs \pmod 2}.$$
Using the same argument and computation as in the proof of Lemma \ref{Delta'E3mod4}, we have
$$\sum_{B\in W_{\alpha-3}}(-1)^{\Tr B \neghs \pmod 2}=-2^{2(\alpha-2)-1} \implies S_\alpha = 16 \cdot -2^{2(\alpha-2)-1} = -2^{2\alpha-1}.$$
This completes the proof.
\end{proof}
By Lemmas \ref{Delta'E1mod4}, \ref{Delta'E3mod4}, \ref{Delta'E2case1}, and \ref{Delta'E2case2}, we may conclude as follows.
\begin{proposition}\label{2adiccase}
    Let $E/\Q$ be a Serre curve of adelic level $m_E$. Let $\Delta'_E$ denote the squarefree part of the discriminant of $E$. Let $\alpha \geq v_2(m_E)$. Then
    \begin{itemize}
        \item If $\Delta'_E\equiv 1 \pmod 4$, then 
        $$|Y_{2^\alpha,+}| - |Y_{2^\alpha,-}| = -2^{2\alpha-1}.$$
        \item If $\Delta'_E \equiv 3 \pmod 4$, then
        $$|Y_{2^\alpha,+}| - |Y_{2^\alpha,-}| = \begin{cases}
            0 & \text{ if } \alpha =2 \\
            -2^{2\alpha-1} & \text{ if } \alpha \geq 3.
        \end{cases}$$
        \item If $\Delta'_E \equiv 2 \pmod 4$, then
        $$|Y_{2^\alpha,+}| - |Y_{2^\alpha,-}| = \begin{cases}
            0 & \text{ if } 3 \leq \alpha \leq 4 \\
            -2^{2\alpha-1} & \text{ if } \alpha \geq 5.
        \end{cases}$$
    \end{itemize}
\end{proposition}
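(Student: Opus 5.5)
The statement is Proposition \ref{2adiccase}, which merely collects the four preceding lemmas. The plan is therefore a case split on the residue of $\Delta'_E$ modulo $4$, citing the appropriate lemma in each case. Throughout, the standing constraint \eqref{conditiononalpha} records which $\alpha$ are allowed: $\alpha \geq v_2(m_E)$, which by Proposition \ref{sizeofmE} is $\alpha\ge 1$, $\alpha\ge 2$, $\alpha \ge 3$ in the three cases respectively.

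The starting point is the decomposition \eqref{Yalphasum},
$$|Y_{2^\alpha,+}|-|Y_{2^\alpha,-}| = n_\alpha(I)+3n_\alpha(\tau).$$
It rests on two facts: the $3$-cycles contribute nothing, since $\det(I-\sigma)$ is odd, and by Lemma \ref{transpositionlemma} all three transpositions contribute equally.

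The cases then go as follows.

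\textbf{Case $\Delta'_E\equiv 1\pmod 4$.} This is exactly Lemma \ref{Delta'E1mod4}. It combines $n_\alpha(I)=|G_1^\alpha(I)|=3\cdot 2^{3(\alpha-1)}-2^{2\alpha-1}$ from Lemma \ref{G1alphaI} with $n_\alpha(\tau)=-2^{3(\alpha-1)}$ from Lemma \ref{nalphatau}. The $2^{3(\alpha-1)}$ terms cancel and leave $-2^{2\alpha-1}$.

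\textbf{Case $\Delta'_E\equiv 3\pmod 4$.} Here $n_\alpha(\tau)=0$ by Lemma \ref{nalphatau}, so everything reduces to $n_\alpha(I)$. Lemma \ref{Delta'E3mod4} gives $0$ for $\alpha=2$, from the trace-parity balance on $M_2(\Z/2\Z)$. For $\alpha\ge3$ it gives $-2^{2\alpha-1}$, via the split $W_{\alpha-1}=W_{\alpha-1,0}\sqcup W_{\alpha-1,*}$ together with Lemma \ref{Fixageq2}.

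\textbf{Case $\Delta'_E\equiv 2\pmod 4$.} Again $n_\alpha(\tau)=0$ by Lemma \ref{nalphatau}. For $\alpha\in\{3,4\}$, Lemma \ref{Delta'E2case1} gives $0$, using the involution $M\mapsto M+4X_0$ when $\alpha=3$ and the trace count of Lemma \ref{V2startracecount} when $\alpha=4$. For $\alpha\ge5$, Lemma \ref{Delta'E2case2} gives $-2^{2\alpha-1}$.

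The only step needing care is the last case. Lemma \ref{Delta'E2case2} invokes Lemma \ref{V2startracecount}, which appears after Lemma \ref{Delta'E2case1} in the text; this is not circular, because Lemma \ref{V2startracecount} is a self-contained count. The proof of Lemma \ref{Delta'E2case2} also reduces $S_\alpha$ to the sum $\sum_{B\in W_{\alpha-3}}(-1)^{\Tr B}$. That sum was evaluated in Lemma \ref{Delta'E3mod4} only for index $\alpha-1\ge 2$, whereas here we have $W_{\alpha-3}$ with $\alpha-3\ge2$. So the reduction is valid precisely when $\alpha\ge5$, which matches the stated range.

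With these checks, the proposition follows by assembling the cases.
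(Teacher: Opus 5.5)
Your proposal is correct and matches the paper's proof: the proposition is obtained by assembling Lemmas \ref{Delta'E1mod4}, \ref{Delta'E3mod4}, \ref{Delta'E2case1}, and \ref{Delta'E2case2} through the decomposition \eqref{Yalphasum}. Your range checks are also accurate, including that reducing to $\sum_{B\in W_{\alpha-3}}(-1)^{\Tr B}$ in Lemma \ref{Delta'E2case2} needs $\alpha\ge 5$.
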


\subsection{Odd Prime powers}
Now, we change our attention $Y_{\ell^\alpha,\pm}$ for odd prime powers $\ell^\alpha$. Recall that
$$Y_{\ell^\alpha, \pm} = \left\{A \in \GL_2(\Z/\ell^\alpha \Z) : \left(\frac{\det A_\ell}{\ell} \right) = \pm 1 \text{ and } \det(I-A) \equiv 0 \pmod {\ell^\alpha}\right\},$$
where $A_\ell$ denotes $A \pmod \ell$. Define 
$$N_\alpha(t,d) \coloneqq \#\{A \in \GL_2(\Z/\ell^\alpha \Z ) : \Tr A = t, \det A = d\}.$$
Since
$$\det(I-A) \equiv 1 + \det A - \Tr A \pmod {\ell^\alpha},$$
by \eqref{eq:det_expand_2x2}, we aim to count $N_\alpha(d+1,d)$ for various $d \in (\Z/\ell^\alpha \Z)^\times$ with $\left(\frac{d \pmod \ell}{\ell}\right) = \pm 1$.

Let $x \in \Z/\ell^\alpha \Z$. We define
$$v_\ell(x) = \max\{0\leq v\leq \alpha : x\equiv 0 \pmod{\ell^v}\}.$$
In particular, $v_\ell(0) = \alpha$.
\begin{lemma}\label{Mxsize}
Let $x \in \Z/\ell^\alpha \Z$ and set $v = v_\ell(x)$. Define
$$M(x) \coloneqq \#\{(b,c) \in (\Z/\ell^\alpha \Z)^2 : bc \equiv x \pmod {\ell^\alpha}\}.$$
Then
$$M(x) = \begin{cases}
(v+1)\varphi(\ell^{\alpha}) & \text{if } x \not\equiv 0 \pmod{\ell^\alpha}, \\
\alpha\varphi(\ell^{\alpha}) + \ell^{\alpha} & \text{if } x \equiv 0 \pmod{\ell^\alpha}.\end{cases}$$
\end{lemma}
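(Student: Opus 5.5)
We count pairs $(b,c)\in(\Z/\ell^\alpha\Z)^2$ with $bc\equiv x$ by sorting on the valuation of $b$. Write $b=\ell^{j}u$ with $0\le j\le \alpha$ and $u$ a unit (for $j=\alpha$ this means $b=0$). For $0\le j\le \alpha-1$, the number of $b\in\Z/\ell^\alpha\Z$ with $v_\ell(b)=j$ is $\varphi(\ell^{\alpha-j})$.

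Fix such a $b$. The congruence $\ell^j u c\equiv x \pmod{\ell^\alpha}$ is solvable in $c$ if and only if $\ell^j\mid x$, that is, $j\le v$. When it is solvable, it has exactly $\ell^j$ solutions: $c$ is determined modulo $\ell^{\alpha-j}$, namely $c\equiv u^{-1}(x/\ell^j)\pmod{\ell^{\alpha-j}}$. Each such $b$ therefore contributes $\ell^j$. For $b=0$ (i.e.\ $j=\alpha$), the congruence holds for all $\ell^\alpha$ values of $c$ when $x\equiv0$, and for none otherwise.

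The case split follows.

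\textbf{Case $x\not\equiv 0$.} Here $v<\alpha$, and
$$M(x)=\sum_{j=0}^{v}\varphi(\ell^{\alpha-j})\,\ell^{j}=\sum_{j=0}^{v}\varphi(\ell^\alpha)=(v+1)\varphi(\ell^\alpha),$$
using $\varphi(\ell^{\alpha-j})\ell^j=\ell^{\alpha-1}(\ell-1)=\varphi(\ell^\alpha)$ for $j\le\alpha-1$.

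\textbf{Case $x\equiv 0$.} Every $j\le\alpha-1$ contributes $\varphi(\ell^\alpha)$, and $b=0$ contributes $\ell^\alpha$. Hence
$$M(0)=\alpha\varphi(\ell^\alpha)+\ell^\alpha.$$

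The only point needing care is the identity $\varphi(\ell^{\alpha-j})\ell^j=\varphi(\ell^\alpha)$. It holds precisely because $j\le \alpha-1$, which makes $\varphi(\ell^{\alpha-j})=\ell^{\alpha-j-1}(\ell-1)$ a genuine totient value. The term $j=\alpha$ must be handled separately, since $\varphi(1)=1$ does not fit the pattern. With that bookkeeping the argument is routine and does not use that $\ell$ is odd.
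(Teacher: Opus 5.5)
Your proof is correct and follows the paper's argument essentially step for step. Like the paper, you stratify by $j=v_\ell(b)$, observe that each of the $\varphi(\ell^{\alpha-j})$ such $b$ with $j\le v$ admits exactly $\ell^j$ choices of $c$ (so each level contributes $\varphi(\ell^\alpha)$), and add the $\ell^\alpha$ pairs with $b=0$ when $x\equiv 0$; your remark that the identity $\varphi(\ell^{\alpha-j})\ell^j=\varphi(\ell^\alpha)$ needs $j\le\alpha-1$ simply makes that bookkeeping explicit.
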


\begin{proof}
Let $i = v_\ell(b)$. If $b \neq 0$, then $b$ can be uniquely expressed as $\ell^i u$ for some $u \in (\Z/\ell^{\alpha-i}\Z)^\times$. We count the number of solutions per each $i$. 

Suppose $x \neq 0$. If $i > v$, then the congruence equation has no solution. If $i \leq v$, we get
$$\ell^i u c \equiv x \pmod {\ell^\alpha} \iff c \equiv u^{-1}x/\ell^i \pmod {\ell^{\alpha-i}},$$
where $x/\ell^i$ denotes the well-defined class in $\Z/\ell^{\alpha-i}\Z$ obtained by dividing $x$ by $\ell^i$. For each $c$ satisfying the above congruence equation, there are exactly $\ell^i$ lifts of them in $\Z/\ell^\alpha \Z$. Therefore, the number of solutions is $\varphi(\ell^{\alpha-i}) \cdot \ell^i = \varphi(\ell^\alpha)$. Summing over $0 \leq i \leq v$, we obtain the first case.

Suppose $x = 0$. Suppose $b \neq 0$. Then we have
$$bc \equiv 0 \pmod {\ell^\alpha} \iff c \equiv 0 \pmod {\ell^{\alpha-i}}.$$
Thus, using the same argument as above, we see that there are $\alpha \varphi(\ell^\alpha)$ solutions in this case. Finally, if $b = 0$, then any choice of $c$ satisfies the congruence equation, contributing another $\ell^\alpha$ solutions. This completes the proof.
\end{proof}

\begin{lemma}\label{Nalphatd}
    Let $\ell^\alpha$ be an odd prime power. Fix $d \in (\Z/\ell^\alpha \Z)^\times$ and $t \in \Z/\ell^\alpha \Z$. For each $1 \leq k \leq \alpha$, define
    $$\Delta \coloneqq t^2-4d \quad \text{ and } \quad S_k \coloneqq \#\{y \in \Z/\ell^\alpha \Z : y^2 \equiv \Delta \pmod {\ell^k}\}.$$
    Then we have
    $$N_\alpha(t,d) = \varphi(\ell^\alpha) \left(\ell^\alpha + \sum_{k=1}^{\alpha-1} S_k\right) + \ell^\alpha S_\alpha.$$
\end{lemma}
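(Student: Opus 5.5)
We parametrize a matrix $A=\begin{pmatrix} a & b\\ c & t-a\end{pmatrix}$ with trace $t$. Then $\det A = a(t-a)-bc$. The condition $\det A=d$ becomes
$$bc \equiv a(t-a)-d \pmod{\ell^\alpha}.$$
Since $d$ is a unit, any matrix with determinant $d$ is automatically invertible. Hence
$$N_\alpha(t,d)=\sum_{a\in \Z/\ell^\alpha\Z} M\bigl(a(t-a)-d\bigr),$$
where $M$ is as in Lemma \ref{Mxsize}. Because $\ell$ is odd, $2$ and $4$ are invertible, and we may complete the square:
$$4\bigl(a(t-a)-d\bigr)=\Delta-(2a-t)^2.$$
As $a$ ranges over $\Z/\ell^\alpha\Z$, the quantity $y=2a-t$ ranges bijectively over $\Z/\ell^\alpha\Z$. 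Multiplying by the unit $4^{-1}$ does not change $v_\ell$. So $v_\ell\bigl(a(t-a)-d\bigr)=v_\ell(\Delta-y^2)$, and therefore
$$N_\alpha(t,d)=\sum_{y\in\Z/\ell^\alpha\Z} M(\Delta-y^2).$$

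Next we insert Lemma \ref{Mxsize}. Write $v(y)=v_\ell(\Delta-y^2)\in\{0,\dots,\alpha\}$. Lemma \ref{Mxsize} gives $M=(v(y)+1)\varphi(\ell^\alpha)$ when $v(y)<\alpha$, and $M=\alpha\varphi(\ell^\alpha)+\ell^\alpha$ when $v(y)=\alpha$. Both cases are captured by the single formula
$$M(\Delta-y^2)=\varphi(\ell^\alpha)\Bigl(1+\sum_{k=1}^{\alpha-1}\mathbf 1[v(y)\ge k]\Bigr)+\ell^\alpha\,\mathbf 1[v(y)\ge\alpha].$$
To verify this, note that when $v<\alpha$ the bracketed sum equals $v$, giving $(v+1)\varphi(\ell^\alpha)$. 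When $v=\alpha$ the bracketed sum is $\alpha-1$, giving $\alpha\varphi(\ell^\alpha)+\ell^\alpha$. This matches Lemma \ref{Mxsize} in both cases.

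Finally we sum over $y$ and swap the order of summation. Using $\#\{y: v(y)\ge k\}=S_k$, we obtain
$$N_\alpha(t,d)=\varphi(\ell^\alpha)\Bigl(\ell^\alpha+\sum_{k=1}^{\alpha-1}S_k\Bigr)+\ell^\alpha S_\alpha,$$
which is the claimed formula.

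The main point requiring care is the bookkeeping in the $x\equiv0$ case of Lemma \ref{Mxsize}, that is, checking that its value fits the unified indicator formula above. The only other step needing attention is that completing the square uses the oddness of $\ell$.
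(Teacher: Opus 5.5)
Your proof is correct and follows the paper's argument: you use the same parametrization of trace-$t$ matrices, the same completion of the square $4\bigl(a(t-a)-d\bigr)=\Delta-(2a-t)^2$ (which relies on $\ell$ being odd), and the same reduction to $\sum_{y} M(\Delta-y^2)$. The only difference is bookkeeping. The paper counts the exact level sets $C_k=S_k-S_{k+1}$ and telescopes, while you write $M$ as a sum of indicators $\mathbf{1}[v(y)\ge k]$ and swap the order of summation; this is the same summation by parts done in one step.
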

\begin{proof}
    Let $A \in \GL_2(\Z/\ell^\alpha \Z)$ with $\Tr A = t$, i.e.,
    $$A = \begin{pmatrix}
        a & b \\ c & t-a
    \end{pmatrix}.$$
    Then $\det A =d$ is equivalent to that $a(t-a) -bc \equiv d \pmod {\ell^\alpha}$. Set $x_a \coloneqq a(t-a)-d$. Using the notation from Lemma \ref{Mxsize}, we obtain
    \begin{equation}\label{originalNalphatd}
    N_\alpha(t,d) = \sum_{a \in \Z/\ell^\alpha \Z} M(x_a).
    \end{equation}
    Note that
    $$4x_a = 4a(t-a)-4d = \Delta - (2a-t)^2.$$
    Since $\ell$ is an odd prime, $v_\ell(x_a) = v_\ell(4x_a)$. Since $M(x)$ only depends on $v_\ell(x)$, by Lemma \ref{Mxsize}, we have
    \begin{equation}\label{linearchange}
        M(x_a) = M(4x_a) = M(\Delta - (2a-t)^2).
    \end{equation}
    Let $y =2a-t$. The map $a \mapsto 2a-t$ gives a bijection on $\Z/\ell^\alpha \Z$. By \eqref{linearchange}, the summation \eqref{originalNalphatd} can be written as
    $$N_\alpha(t,d) = \sum_{y \in \Z/\ell^\alpha \Z} M(\Delta-y^2).$$
    Thus, we are left to find how $v_\ell(\Delta-y^2)$ is distributed as $y$ varies.

    Note that the sets counted by $S_k$ are nested. Thus, we have
    $$C_k \coloneqq\#\{y \in \Z/\ell^\alpha \Z : v_\ell(\Delta-y^2) = k\} = \begin{cases}
        \ell^\alpha - S_1 & \text{ if } k =0,
    \\
    S_k - S_{k+1} & \text{ if } 1 \leq k \leq \alpha-1, \\
    S_\alpha & \text{ if } k = \alpha.
    \end{cases}$$
    Applying Lemma \ref{Mxsize}, we obtain
    \begin{align*}
        N_\alpha(t,d) &= \sum_{y \in \Z/\ell^\alpha \Z}M(\Delta-y^2) = \sum_{k =0}^{\alpha-1} C_k  (k+1) \varphi(\ell^\alpha) + C_\alpha  (\alpha\varphi(\ell^\alpha) + \ell^\alpha) \\
        &= \varphi(\ell^\alpha)(\ell^\alpha -S_1) + \varphi(\ell^\alpha) \sum_{k=1}^{\alpha-1} (S_k-S_{k+1})(k+1) + (\alpha \varphi(\ell^\alpha) + \ell^\alpha)(S_\alpha) \\
        &= \varphi(\ell^\alpha) \left(\ell^\alpha + \sum_{k=1}^{\alpha-1} S_k\right) + \ell^\alpha S_\alpha.
    \end{align*}
    This completes the proof.
\end{proof}

\begin{lemma}\label{Skvalue}
    Let $\Delta \in \Z/\ell^\alpha \Z$ and $s = v_\ell(\Delta)$. If $s < \alpha$, write $\Delta = \ell^s u$ for some unit $u$ modulo $\ell$. With the notation as in Lemma \ref{Nalphatd}, we have
    $$S_k = \begin{cases}
        \ell^{\alpha - \lceil k/2\rceil} &\text{ if } k \leq s \\
        2\ell^{\alpha-k+s/2} & \text{ if } k > s, s \text{ is even, and }\left(\frac{u}{\ell}\right) = 1,\\
        0 &\text{ otherwise.}
    \end{cases}$$
\end{lemma}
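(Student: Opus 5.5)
We count solutions of $y^2\equiv \Delta \pmod{\ell^k}$ with $y$ ranging over $\Z/\ell^\alpha\Z$. Since the condition depends only on $y \bmod \ell^k$, every solution class modulo $\ell^k$ has exactly $\ell^{\alpha-k}$ lifts. So $S_k=\ell^{\alpha-k}\cdot R_k$, where $R_k$ is the number of $y\in\Z/\ell^k\Z$ with $y^2\equiv\Delta\pmod{\ell^k}$, and the statement reduces to computing $R_k$. Here $\ell$ is odd, as in Lemma \ref{Nalphatd}. I also read $s=\alpha$ when $\Delta=0$, so that the first case covers all $k\le\alpha$.

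\emph{Case $k\le s$.} Now $\Delta\equiv 0 \pmod{\ell^k}$, so we need $\ell^k\mid y^2$. This holds if and only if $v_\ell(y)\ge \lceil k/2\rceil$. The number of such $y$ modulo $\ell^k$ is $\ell^{k-\lceil k/2\rceil}$. Multiplying by $\ell^{\alpha-k}$ gives $S_k=\ell^{\alpha-\lceil k/2\rceil}$.

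\emph{Case $k>s$.} Write $\Delta=\ell^s u$ with $\ell\nmid u$. Any solution $y$ must satisfy $v_\ell(y^2)=s$. Otherwise $v_\ell(y^2-\Delta)=\min(v_\ell(y^2),s)\le s<k$, which is impossible. Hence $s$ must be even, and $y=\ell^{s/2}z$ with $\ell\nmid z$. The congruence becomes $z^2\equiv u\pmod{\ell^{k-s}}$, where $z$ is determined modulo $\ell^{k-s/2}$.

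Since $\ell$ is odd and $u$ is a unit, Hensel's lemma gives exactly $2$ solutions modulo $\ell^{k-s}$ if $\left(\frac{u}{\ell}\right)=1$, and $0$ otherwise. Each solution modulo $\ell^{k-s}$ lifts to $\ell^{(k-s/2)-(k-s)}=\ell^{s/2}$ values of $z$ modulo $\ell^{k-s/2}$. So $R_k=2\ell^{s/2}$ in the good case and $0$ otherwise. Therefore $S_k=\ell^{\alpha-k}\cdot 2\ell^{s/2}=2\ell^{\alpha-k+s/2}$. When $s$ is odd, there are no solutions.

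The one step needing care is the bookkeeping of moduli: $y$ lives modulo $\ell^k$, while $z$ lives modulo $\ell^{k-s/2}$, and the congruence for $z$ is only modulo $\ell^{k-s}$. The cleanest way is to count directly inside $\Z/\ell^\alpha\Z$. There $y=\ell^{s/2}z$, with $z$ a unit modulo $\ell^{\alpha-s/2}$, and $z$ must satisfy $z^2\equiv u \pmod{\ell^{k-s}}$. This gives $2\,\ell^{(\alpha-s/2)-(k-s)}=2\ell^{\alpha-k+s/2}$ directly. Either route gives the same answer.
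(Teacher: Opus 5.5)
Your proof is correct and follows essentially the same route as the paper: you reduce to counting solutions modulo $\ell^k$ via $S_k=\ell^{\alpha-k}s_k$, treat $k\le s$ through $y\equiv 0 \pmod{\ell^{\lceil k/2\rceil}}$, and for $k>s$ you write $y=\ell^{s/2}w$, apply Hensel to $w^2\equiv u \pmod{\ell^{k-s}}$, and count the $\ell^{s/2}$ lifts. Your valuation argument forcing $v_\ell(y^2)=s$ makes explicit a step the paper states without justification.
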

\begin{proof}
    For $1 \leq k \leq \alpha$, define
    $$s_k \coloneqq \#\{y \in \Z/\ell^k \Z : y^2 \equiv \Delta \pmod {\ell^k}\}.$$
    Since each solution modulo $\ell^k$ has exactly $\ell^{\alpha-k}$ lifts to a solution modulo $\ell^\alpha$, we have $S_k = s_k\ell^{\alpha-k}$.

    Suppose $k \leq s$. Then $\Delta \equiv 0 \pmod {\ell^k}$. Note that
   $$y^2 \equiv 0 \pmod {\ell^k} \iff y \equiv 0 \pmod {\ell^{\lceil k/2\rceil}},$$
   and there are exactly $\ell^{k- \lceil k/2 \rceil}$ solutions modulo $\ell^k$ satisfying the right-hand side. Thus, $S_k = \ell^{k - \lceil k/2 \rceil} \cdot \ell^{\alpha-k} = \ell^{\alpha - \lceil k/2\rceil}$.

   Now, suppose $k > s$. Note that $\Delta' \neq 0$ this case, and hence it can be expressed as $\ell^s u$ for some $0 \leq s < \alpha$ and a unit $u$ modulo $\ell$. Obviously, the congruence equation $y^2 \equiv \Delta \pmod {\ell^k}$ admits no solutions if $s$ is odd. Hence, from now on, assume that $s$ is even and write $s = 2h$. Since $k > 2h$, any solution must be of the form $y = \ell^h w$ for some unit $w$ modulo $\ell$. Thus we observe that
   $$y^2 \equiv \Delta \pmod {\ell^k} \iff w^2 \equiv u \pmod {\ell^{k-2h}}.$$
   The congruence equation $y^2 \equiv \Delta \pmod {\ell}$ admits no solutions if $\left(\frac{u}{\ell}\right) = -1$ and has two solutions if $\left(\frac{u}{\ell}\right) = 1$. In the latter case, since $2w \not \equiv 0 \pmod {\ell}$, Hensel's lemma implies that each solution modulo $\ell$ lifts uniquely to a solution modulo $\ell^{k-2h}$.
   
     Let $w_0 \in \Z/\ell^{k-2h} \Z$ be a such solution. For each $t \in \Z/\ell^h \Z$, define
    $$w_t \coloneqq w_0 + \ell^{k-2h} t \in \Z/\ell^{k-h}\Z \quad \text{ and } \quad y_t \coloneqq \ell^h w_t \in \Z/\ell^k\Z.$$
    Then we have $y_t^2 \equiv \Delta \pmod{\ell^k}$. If $t \not \equiv t' \pmod {\ell^h}$, we see that $y_t-y_{t'} = \ell^{k-h} (t-t') \not \equiv 0 \pmod {\ell^k}$, and hence each $y_t$ is distinct. Therefore, each solution $w_0$ gives rise to exactly $\ell^h$ solutions $y \pmod {\ell^k}$, and hence $s_k = 2\ell^h$. This completes the proof.
\end{proof}
We now compute $N_{\alpha}(d+1,d)$ for various $d \in (\Z/\ell^\alpha \Z)^\times$.
\begin{lemma}\label{Nalphad+1d}
    Let $r = v_\ell(d-1)$. Then we have
    $$N_\alpha(d+1,d) = \begin{cases}
        (\ell+1)\ell^{2\alpha-1} - \ell^{\lfloor \frac{3\alpha-1}{2}\rfloor} & \text{ if } r \geq \alpha/2, \\
        (\ell+1) \ell^{2\alpha-1} & \text{ if } r < \alpha/2.
    \end{cases}$$
\end{lemma}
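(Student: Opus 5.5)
The plan is to specialize Lemma \ref{Nalphatd} to $t=d+1$ and then evaluate the resulting sums of $S_k$ using Lemma \ref{Skvalue}. The key observation is that the discriminant becomes a perfect square:
$$\Delta=(d+1)^2-4d=(d-1)^2 .$$
Hence, when $\Delta\not\equiv 0 \pmod{\ell^\alpha}$, its valuation is $s=2r$, which is even. Moreover, writing $d-1=\ell^r w$ with $w$ a unit, the unit part is $u=w^2$, so $\left(\frac{u}{\ell}\right)=1$ automatically. The second case of Lemma \ref{Skvalue} therefore always applies when $k>s$, and no Legendre symbol appears in the answer. This is why the final formula depends only on $r$.

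\emph{Case $r<\alpha/2$.} Here $s=2r<\alpha$, and Lemma \ref{Skvalue} gives
$$S_k=\ell^{\alpha-\lceil k/2\rceil} \quad (k\le 2r), \qquad S_k=2\ell^{\alpha-k+r} \quad (k>2r),$$
and in particular $S_\alpha=2\ell^{r}$. I will substitute these into
$$N_\alpha(d+1,d)=\varphi(\ell^\alpha)\Bigl(\ell^\alpha+\sum_{k=1}^{\alpha-1}S_k\Bigr)+\ell^\alpha S_\alpha .$$
I then split the sum at $k=2r$. The part with $k\le 2r$ pairs up consecutive $k=2j-1,2j$ into $2\ell^{\alpha-j}$ for $1\le j\le r$. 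The part with $2r<k\le\alpha-1$ is a geometric series equal to $2\ell^{r}\,(\ell^{\alpha-2r}-\ell)/(\ell-1)$. Both pieces telescope nicely against $\varphi(\ell^\alpha)=\ell^{\alpha-1}(\ell-1)$. As a sanity check, for $r=0$ the computation gives
$$\ell^{2\alpha}-\ell^{2\alpha-1}+2\ell^{2\alpha-1}-2\ell^\alpha+2\ell^\alpha=(\ell+1)\ell^{2\alpha-1},$$
as claimed. For general $r$, I expect the $\ell^{\alpha-j}$ terms from $k\le 2r$ to cancel against the shift in the lower endpoint of the geometric series, again leaving $(\ell+1)\ell^{2\alpha-1}$.

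\emph{Case $r\ge \alpha/2$.} Now $2r\ge\alpha$, so $\Delta\equiv 0\pmod{\ell^\alpha}$ and $s=\alpha$. Every $k$ then falls in the first case of Lemma \ref{Skvalue}, giving $S_k=\ell^{\alpha-\lceil k/2\rceil}$ for all $1\le k\le\alpha$. The computation reduces to evaluating $\sum_{k=1}^{\alpha-1}\ell^{\alpha-\lceil k/2\rceil}$ together with the extra term $\ell^\alpha S_\alpha=\ell^{2\alpha-\lceil\alpha/2\rceil}$. I will handle $\alpha$ even and $\alpha$ odd separately, since the pairing of $k=2j-1,2j$ leaves one unpaired term when $\alpha-1$ is odd.

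The main obstacle is this last bookkeeping. One must check that the leftover terms combine to exactly $-\ell^{\lfloor(3\alpha-1)/2\rfloor}$: when $\alpha=2m$ this should be $\ell^{3m-1}$, and when $\alpha=2m+1$ it should be $\ell^{3m+1}$. The risk is an off-by-one in the ceiling at the top endpoint, so I will verify the small cases $\alpha=1$ and $\alpha=2$ directly; for example, $\alpha=1$ should give $\ell^2+\ell-\ell=\ell^2$.
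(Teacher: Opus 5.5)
Your proposal is correct and follows the paper's proof essentially step for step. Like the paper, you specialize Lemma \ref{Nalphatd} to $t=d+1$, use $\Delta=(d-1)^2$ so that the quadratic-residue condition in Lemma \ref{Skvalue} holds automatically, and evaluate $\sum S_k$ separately for $2r<\alpha$ (splitting at $k=2r$) and for $2r\ge\alpha$ (splitting by the parity of $\alpha$); the remaining bookkeeping you flag does come out to $-\ell^{3m-1}$ for $\alpha=2m$ and $-\ell^{3m+1}$ for $\alpha=2m+1$.
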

\begin{proof}
    First of all, note that $\Delta = (d+1)^2-4d = (d-1)^2 \in (\Z/\ell^\alpha\Z)$. Thus, $\Delta$ is either 0 or it can be expressed as $\ell^{2r}u$ for some quadratic residue $u$ modulo $\ell$.

    We first consider the case $2r \geq \alpha$. Then $S_k = \ell^{\alpha - \lceil k/2\rceil}$ for each $1 \leq k \leq \alpha$ by Lemma \ref{Skvalue}. We partition the sum into two terms with $k$ even and those with $k$ odd.

    Suppose $\alpha$ is even and write $\alpha = 2m$. Then 
    \begin{align*}
        \sum_{k=1}^{2m-1} S_k = \sum_{j=1}^m S_{2j-1} + \sum_{j=1}^{m-1} S_{2j} = \sum_{j=1}^m \ell^{2m -j} + \sum_{j=1}^{m-1}\ell^{2m-j} = \frac{2(\ell^{2m} - \ell^{m+1})}{\ell-1} + \ell^m.
    \end{align*}
    Thus, by Lemma \ref{Nalphatd},
    \begin{align*}
        N_\alpha(d+1,d) &= \varphi(\ell^{2m}) \left(\ell^{2m} + \frac{2(\ell^{2m} - \ell^{m+1})}{\ell-1} + \ell^m\right) + \ell^{2m} \cdot \ell^m \\
        &=(\ell-1)\ell^{4m-1} + 2\ell^{3m}(\ell^{m-1}-1)+(\ell-1)\ell^{3m-1} + \ell^{3m} = (\ell+1)\ell^{4m-1} - \ell^{3m-1}.
    \end{align*}
    Note that $3m-1 = \lfloor \frac{3\alpha-1}{2}\rfloor$ and hence the desired result is obtained.

    On the other hand, suppose $\alpha$ is odd and write $\alpha = 2m+1$. Then
    \begin{equation}\label{evencase}
            \sum_{k=1}^{2m} S_k = \sum_{j=1}^m S_{2j-1} + \sum_{j=1}^{m} S_{2j} = 2\sum_{j=1}^m \ell^{2m+1-j} = 2 \left(\frac{\ell^{2m+1} - \ell^{m+1}}{\ell-1} \right).
    \end{equation}
    Again, by Lemma \ref{Nalphatd},
    \begin{align*}
        N_\alpha(d+1,d) &= \varphi(\ell^{2m+1}) \left(\ell^{2m+1} + 2 \left(\frac{\ell^{2m+1} - \ell^{m+1}}{\ell-1}\right)\right) + \ell^{2m+1} \cdot \ell^m \\
        &=(\ell-1)\ell^{4m+1} + 2\ell^{3m+1}(\ell^{m}-1)+ \ell^{3m+1} = (\ell+1)\ell^{4m+1} - \ell^{3m+1}.
    \end{align*}
    Note that $3m+1 = \lfloor \frac{3\alpha-1}{2}\rfloor$ and hence the desired result is obtained.

    Now, we turn our attention to the case $2r < \alpha$. By Lemma \ref{Skvalue}, we have
    $$S_k = \begin{cases}
        \ell^{\alpha-\lceil k/2\rceil} & \text{ if } 1 \leq k \leq 2r, \\
        2\ell^{\alpha-k+r} & \text{ if } 2r < k \leq \alpha. 
    \end{cases}$$
    Thus, we have 
    \begin{equation}
                \sum_{k=1}^{\alpha-1} S_k = \sum_{k=1}^{2r} \ell^{\alpha - \lceil k/2\rceil} + \sum_{k=2r+1} ^{\alpha-1} 2\ell^{\alpha-k+r}. 
    \end{equation}
The first sum can be computed by separating the terms according to the parity of $k$. That is,
    $$\sum_{k=1}^{2r} \ell^{\alpha - \lceil k/2\rceil} = 2\sum_{j=1}^r \ell^{\alpha-j} = 2 \left( \frac{\ell^\alpha - \ell^{\alpha-r}}{\ell-1}\right).$$
The second sum can be simplified using the geometric series formula:
    $$\sum_{k=2r+1}^{\alpha-1} 2\ell^{\alpha-k+r} = 2\ell^{\alpha+r} \sum_{k=2r+1}^{\alpha-1} \ell^{-k} = 2\ell^{\alpha+r} \left( \frac{\ell^{-2r} - \ell^{1-\alpha}}{\ell-1}\right) = 2 \left(\frac{\ell^{\alpha-r} - \ell^{1+r}}{\ell-1}\right).$$
    Therefore, we obtain
    $$\sum_{k=1}^{\alpha-1}S_k =  2 \left( \frac{\ell^\alpha - \ell^{\alpha-r}}{\ell-1}\right) +2 \left(\frac{\ell^{\alpha-r} - \ell^{1+r}}{\ell-1}\right) = 2 \left(\frac{\ell^{\alpha}-\ell^{1+r}}{\ell-1}\right).$$
    Since $2r < \alpha$, we have $S_\alpha = 2\ell^r$. Therefore, by Lemma \ref{Nalphatd}, we obtain
    \begin{align*}
        N_{\alpha}(d+1,d) &= \varphi(\ell^{\alpha}) \left( \ell^\alpha + 2 \left(\frac{\ell^{\alpha}- \ell^{r+1}}{\ell-1}\right)\right) + \ell^{\alpha} \cdot 2\ell^r \\
        &= (\ell-1)\ell^{2\alpha-1} + 2\ell^{2\alpha-1} -2\ell^{\alpha+r} + 2\ell^{\alpha+r} = (\ell+1)\ell^{2\alpha-1}.
    \end{align*}
    This completes the proof.
\end{proof}

\begin{proposition}\label{oddadic}
    For any odd prime power $\ell^\alpha$,
    $$|Y_{\ell^\alpha,+}| - |Y_{\ell^\alpha,-}| = -\ell^{2\alpha - 1}.$$
\end{proposition}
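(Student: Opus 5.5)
The plan is to express the difference $|Y_{\ell^\alpha,+}|-|Y_{\ell^\alpha,-}|$ as a character-weighted sum of the counts $N_\alpha(d+1,d)$, and then substitute the explicit formula of Lemma \ref{Nalphad+1d}. For $A\in\GL_2(\Z/\ell^\alpha\Z)$, the identity \eqref{eq:det_expand_2x2} gives
$$\det(I-A)=1+\det A-\Tr A.$$
Hence $A\in\Psi(\ell^\alpha)$ exactly when $\Tr A=\det A+1$. Grouping such matrices by $d=\det A\in(\Z/\ell^\alpha\Z)^\times$, and recalling that $\psi_{\ell^\alpha}(A)=\left(\frac{d \bmod \ell}{\ell}\right)$ for odd $\ell$, I expect
$$|Y_{\ell^\alpha,+}|-|Y_{\ell^\alpha,-}|=\sum_{d\in(\Z/\ell^\alpha\Z)^\times}\left(\frac{d \bmod \ell}{\ell}\right)N_\alpha(d+1,d).$$

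Next I insert Lemma \ref{Nalphad+1d}, which writes
$$N_\alpha(d+1,d)=(\ell+1)\ell^{2\alpha-1}-\mathbf 1_{\{r\ge\alpha/2\}}\,\ell^{\lfloor(3\alpha-1)/2\rfloor},\qquad r=v_\ell(d-1).$$
The constant part contributes nothing. Indeed, $d\mapsto\left(\frac{d \bmod \ell}{\ell}\right)$ is a nontrivial character of $(\Z/\ell^\alpha\Z)^\times$, since each residue class modulo $\ell$ has $\ell^{\alpha-1}$ unit lifts and the Legendre symbol sums to zero over $\F_\ell^\times$. Only the correction term survives, so the difference equals
$$-\ell^{\lfloor(3\alpha-1)/2\rfloor}\sum_{d:\,r\ge\alpha/2}\left(\frac{d \bmod \ell}{\ell}\right).$$

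Now I analyse the condition $r\ge\alpha/2$. Because $r$ is an integer and the convention $v_\ell(0)=\alpha$ includes $d=1$, this condition is the same as $d\equiv1\pmod{\ell^{\lceil\alpha/2\rceil}}$. Since $\alpha\ge1$ forces $\lceil\alpha/2\rceil\ge1$, every such $d$ satisfies $d\equiv1\pmod\ell$, so each Legendre symbol in the sum equals $1$. The number of such $d$ modulo $\ell^\alpha$ is $\ell^{\alpha-\lceil\alpha/2\rceil}=\ell^{\lfloor\alpha/2\rfloor}$, and all of them are units.

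The difference therefore equals $-\ell^{\lfloor(3\alpha-1)/2\rfloor+\lfloor\alpha/2\rfloor}$. It remains to check the exponent by parity:
\begin{itemize}
\item if $\alpha=2m$, the exponent is $(3m-1)+m=4m-1=2\alpha-1$;
\item if $\alpha=2m+1$, the exponent is $(3m+1)+m=4m+1=2\alpha-1$.
\end{itemize}
In both cases the result is $-\ell^{2\alpha-1}$, as claimed. The only delicate point is confirming that the threshold in Lemma \ref{Nalphad+1d} translates exactly to $d\equiv1\pmod{\ell^{\lceil\alpha/2\rceil}}$, including the boundary case $d=1$. Everything else is direct bookkeeping.
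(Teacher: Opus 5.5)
Your proof is correct and follows the paper's argument. You sum $N_\alpha(d+1,d)$ from Lemma \ref{Nalphad+1d} over units $d$, note that only the $\ell^{\lfloor \alpha/2\rfloor}$ values $d\equiv 1 \pmod{\ell^{\lceil\alpha/2\rceil}}$ (all quadratic residues) carry the correction term, and check the exponent by parity; the only cosmetic difference is that you cancel the main term by orthogonality of the nontrivial character, whereas the paper computes $|Y_{\ell^\alpha,+}|$ and $|Y_{\ell^\alpha,-}|$ separately using that there are $\varphi(\ell^\alpha)/2$ residues and $\varphi(\ell^\alpha)/2$ non-residues.
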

\begin{proof}
    Recall that
    $$Y_{\ell^\alpha,\pm} = \left\{M \in \GL_2(\Z/\ell^\alpha\Z) : \det M + 1 \equiv \Tr M \pmod {\ell^\alpha} \text{ and } \left(\frac{\det M_\ell}{\ell}\right) = \pm 1\right\},$$
    and hence 
    $$|Y_{\ell^\alpha,\pm}| = \sum_{\substack{d \in (\Z/\ell^\alpha \Z)^\times \\ \left(\frac{d }{\ell}\right) = \pm 1}} N_\alpha(d+1,d).$$
    There are $\varphi(\ell^\alpha)/2$ quadratic non-residues modulo $\ell$ in $(\Z/\ell^\alpha \Z)^\times$. For any such element $d$, we have $d \not \equiv 1 \pmod \ell$, and hence $v_\ell(d-1) =0<\alpha/2$.  By Lemma \ref{Nalphad+1d},
    $$|Y_{\ell^\alpha,-}| = \sum_{\substack{d \in (\Z/\ell^\alpha \Z)^\times \\ \left(\frac{d }{\ell}\right) = -1}} N_{\alpha}(d+1,d) = \frac{\varphi(\ell^\alpha)}{2}(\ell+1)\ell^{2\alpha-1}.$$
    Likewise, there are $\varphi(\ell^\alpha)/2$ quadratic residues modulo $\ell$ in $(\Z/\ell^\alpha \Z)^\times$. Let $c = \lceil \alpha/2\rceil$. Exactly $\ell^{\alpha-c}$  elements $d \in (\Z/\ell^\alpha \Z)^\times$ satisfy $v_\ell(d-1) \geq \alpha/2$. By Lemma \ref{Nalphad+1d},
    \begin{align*}
        |Y_{\ell^\alpha,+}| &= \left( \frac{\varphi(\ell^\alpha)}{2} - \ell^{\alpha-c} \right) (\ell+1)\ell^{2\alpha-1} + \ell^{\alpha-c} \left( (\ell+1)\ell^{2\alpha-1} - \ell^{\lfloor \frac{3\alpha-1}{2}\rfloor}\right) \\
        &=\frac{\varphi(\ell^\alpha)}{2}(\ell+1)\ell^{2\alpha-1} - \ell^{\alpha-c + \lfloor \frac{3\alpha-1}{2}\rfloor},
    \end{align*}
    and hence
    $$|Y_{\ell^\alpha,+}| - |Y_{\ell^\alpha,-}| = -\ell^{\alpha-c+\lfloor \frac{3\alpha-1}{2}\rfloor}.$$
    Checking each parity of $\alpha$ separately, we see that $\alpha-c+\lfloor \frac{3\alpha-1}{2}\rfloor = 2\alpha-1$. 
\end{proof}
\begin{proof}[Proof of Theorem \ref{maintheorem}]
    Let $E/\Q$ be a Serre curve of adelic level $m_E$ and $m$ be a positive integer. Let $\Delta'_E$ denote the squarefree part of the discriminant of $E$.

    The case that $m_E \nmid m$ follows from Proposition \ref{Serremodmimage} and Proposition \ref{fullgroupcaseofconstant}.

    Suppose $m_E \mid m$. Write $m = m_1m_2$ so that $m_1 = \gcd(m,m_E^\infty)$ and $\gcd(m_2,m_E) = 1$. Suppose $\Delta'_E \equiv 3 \pmod 4$ and $v_2(m_1) = 2$ or $\Delta'_E \equiv 2 \pmod 4$ and $v_2(m_1) \leq 4$. By Proposition \ref{countingprop} and Proposition \ref{2adiccase},
    $$|\psi_{m_1}^{-1}(+1) \cap \Psi(m_1)| = \frac{1}{2}|\Psi(m_1)|.$$
    By Proposition \ref{Serremodmimage} and \eqref{correctdecomposition},
    $$C_E^{m\divi} = \frac{\frac{1}{2}|\Psi(m_1)|}{|H_E(m_1)|} \cdot \frac{|\Psi(m_2)|}{|\GL_2(\Z/m_2\Z)|} = \frac{|\Psi(m)|}{|\GL_2(\Z/m\Z)|}.$$
    In particular, the $m$-divisibility density $C_E^{m\divi}$ coincides with the average density $C^{m\divi}$.

    Finally, suppose the rest of the cases. By Propositions \ref{countingprop}, \ref{2adiccase}, and \ref{oddadic}, we have
    \begin{align*}
        |\psi_{m_1}^{-1}(+1) \cap \Psi(m_1)| = \frac{1}{2}\left( |\Psi(m_1)| + \prod_{\ell^\alpha \parallel m_1} -\ell^{2\alpha-1}\right).
    \end{align*}
    By \eqref{correctdecomposition},
    $$C_E^{m\divi} = \left( \frac{\frac{1}{2}\left( |\Psi(m_1)| + \prod_{\ell^\alpha \parallel m_1} -\ell^{2\alpha-1}\right)}{\frac{1}{2}|\GL_2(\Z/m_1\Z)|}\right) \cdot  C^{m_2\divi} = \left(C^{m_1\divi} + \prod_{\ell^\alpha \parallel m_1} \frac{-\ell^{2\alpha-1}}{|\GL_2(\Z/\ell^\alpha \Z)|}\right) \cdot C^{m_2\divi}.$$
    This completes the proof.
    \end{proof}
    Finally, we conclude this chapter with the proof of Corollary \ref{maincorollary}, that is $C_E^{m\divi} > 1/m$ for any Serre curve $E/\Q$ and $m \geq 2$.

    \begin{proof}[Proof of Corollary \ref{maincorollary}]
For every prime power $\ell^\alpha$, one can easily check that
$$C^{\ell^\alpha\divi} = \frac{1}{\varphi(\ell^\alpha)}
\left(1-\frac{1}{\varphi(\ell^\alpha)(\ell+1)}\right)=\frac{1}{\ell^\alpha}\cdot\frac{\ell}{\ell-1}\left(1-\frac{1}{\ell^{\alpha-1}(\ell^2-1)}\right)> \frac{1}{\ell^\alpha}.$$
Thus, if $m_E \nmid m$, by Proposition \ref{fullgroupcaseofconstant},
$$C^{m\divi}_E = C^{m\divi} = \prod_{\ell^\alpha \parallel m} C^{\ell^\alpha\divi} > \frac{1}{m}.$$
Now, let $m = m_1m_2$ with $m_1 = \gcd(m,m_E^\infty)$ and $\gcd(m_2,m_E) = 1$. By Theorem \ref{maintheorem}, it suffices to check that
$$C^{m_1\divi} - \prod_{\ell^\alpha \parallel m_1} \frac{\ell^{2\alpha-1}}{|\GL_2(\Z/\ell^\alpha\Z)|} > \frac{1}{m_1}.$$
   Note that
    \begin{align*}
        C^{m_1\divi} - \prod_{\ell^\alpha \parallel m_1} \frac{\ell^{2\alpha-1}}{|\GL_2(\Z/\ell^\alpha\Z)|} &= \prod_{\ell^\alpha \parallel m_1} \frac{1}{\varphi(\ell^\alpha)}\left( 1 - \frac{1}{\varphi(\ell^\alpha)(\ell+1)}\right) - \prod_{\ell^\alpha \parallel m_1} \frac{\ell^{2\alpha-1}}{\ell^{4\alpha-4}(\ell^2-1)(\ell^2-\ell)} \\
        &= \frac{1}{m_1}\prod_{\ell^\alpha \parallel m_1}\frac{\ell}{\ell-1}\left(1 - \frac{1}{\ell^{\alpha-1}(\ell^2-1)}\right) - \frac{1}{m_1} \prod_{\ell^\alpha \parallel m_1} \frac{1}{\ell^{\alpha-2}(\ell^2-1)(\ell+1)}.
    \end{align*}
    Define multiplicative functions $F$ and $H$ that are defined at prime powers as follows:
  $$F(\ell^\alpha) \coloneqq \frac{1}{\ell^{\alpha-2}(\ell^2-1)(\ell+1)} \quad \text{ and } \quad   H(\ell^\alpha) \coloneqq \frac{\ell}{\ell-1}\left( 1 - \frac{1}{\ell^{\alpha-1}(\ell^2-1)}\right).$$
    Hence, it suffices to prove that
    $$H(m_1) - F(m_1) > 1.$$
    If $\ell$ is odd or $\ell = 2$ and $\alpha \geq 2$, one can easily check that
    $$H(\ell^\alpha) - F(\ell^\alpha) - 1 = \frac{\ell^{\alpha-1}(\ell^2-1)-2\ell}{\ell^{\alpha-1}(\ell-1)(\ell^2-1)} > 0.$$
    Thus, if $v_2(m_1) \geq 2$, we have
    $$H(m_1) = \prod_{\ell^\alpha \parallel m_1} H(\ell^\alpha) > \prod_{\ell^\alpha \parallel m_1}(1+F(\ell^\alpha)) \geq 1 + \prod_{\ell^\alpha \parallel m_1} F(\ell^\alpha) = 1 + F(m_1),$$
    since each $F(\ell^\alpha)$ is positive.

    Finally, suppose $v_2(m_E) = v_2(m_1) = 1$. Write $m_1 = 2n$ for some odd integer $n$. Since $m_E \mid m$, by Lemma \ref{atleast6}, $n \geq 3$. Since $H(n) > F(n) + 1$, we have
    $$H(m_1) - F(m_1) = \frac{4}{3}H(n) - \frac{2}{3}F(n) > \frac{2}{3}F(n) + \frac{4}{3} > 1.$$
    This completes the proof.
    \end{proof}

\section{Proof of Theorem \ref{maintheorem2}}\label{maintheorem2proof}
In this section we prove Theorem \ref{maintheorem2}. Recall that $\mathcal{F} \coloneqq \mathcal{F}(A,B)$ is the family of all elliptic curves $Y^2 = X^3 + aX+b$ with $a,b\in \Z$, $|a| \leq A$, and $|b| \leq B$. We first partition $\mathcal{F}$ into the subfamily of Serre curves $\mathcal{F}^{\text{Serre}}$ and subfamily of non-Serre curves $\mathcal{F}^{\text{non-Serre}}$. Fix positive integers $k$ and $m\geq 2$. We further partition $\mathcal{F}^{\text{Serre}}$ as follows:
\begin{align*}
    \mathcal{F}_1\coloneqq \{E \in \mathcal{F}^{\text{Serre}}: C_E^{m\divi} = C^{m\divi}\} \quad \text{ and } \quad \mathcal{F}_2 \coloneqq \{E \in \mathcal{F}^\text{Serre}: C_E^{m\divi} \neq C^{m\divi}\}.
\end{align*}
We aim to bound
$$\frac{1}{|\mathcal{F}|}\left( \sum_{\substack{E \in \mathcal{F}_1}}|C_E^{m\divi}-C^{m\divi}|^k +  \sum_{\substack{E \in \mathcal{F}_2}}|C_E^{m\divi}-C^{m\divi}|^k +\sum_{\substack{E \in \mathcal{F}^{\text{non-Serre}}}} |C_E^{m\divi}-C^{m\divi}|^k\right).$$
Clearly, the sum over $\mathcal{F}_1$ is zero, so it suffices to bound the remaining two sums.

Let $E/\Q$ be a non-Serre curve. By Remark \ref{upperbound}, Remark \ref{lowerbound}, and \eqref{definitionofaveragedensity}, we have
$$|C_E^{m\divi} - C^{m\divi}|^k < 1.$$
By \cite[Theorem 25]{MR2534114}, there exists an absolute constant $\gamma > 0$ such that
\begin{equation}\label{nonserresum}
    \frac{1}{|\mathcal{F}|}\sum_{\substack{E \in \mathcal{F}^{\text{non-Serre}}}}|C_E^m-C^m|^k \ll \frac{\log^\gamma(\min\{A,B\})}{\sqrt{\min\{A,B\}}}.
\end{equation}
It remains to bound the contribution from $\mathcal{F}_2$. Let $E \in \mathcal{F}_2$. By Theorem
\ref{maintheorem}, one can check that
\begin{equation}\label{localdifference}
    |C_E^{m\divi} - C^{m\divi}|  = C^{m_2\divi} \cdot \prod_{\ell^\alpha \parallel m_1} \frac{\ell^{2\alpha-1}}{|\GL_2(\Z/\ell^\alpha\Z)|} = C^{m_2\divi}  \cdot \frac{1}{m_1^2} \prod_{\ell \mid m_1} \frac{\ell^2}{(\ell-1)^2(\ell+1)}.
\end{equation}
For any odd prime $\ell$, the local factor satisfies
$$\frac{\ell^2}{(\ell-1)^2(\ell+1)} < 1,$$
while it is equal to $4/3$ when $\ell = 2$. Also, it is clear from the definition that $C^{m_2\divi} \leq 1/\varphi(m_2)$. Thus,
\begin{equation}\label{difference}
    |C_E^{m\divi} - C^{m\divi}| \ll \frac{1}{m_1^{2} \varphi(m_2)}.
\end{equation}
For each $d \mid m$, define
$$\mathcal{F}_2(d) \coloneqq \{E \in \mathcal{F}_2 : \gcd(m,m_E^\infty) = d\}.$$
Then,
\begin{equation}\label{averagebound1}
    \sum_{E \in \mathcal{F}_2} |C_E^{m\divi} - C^{m\divi}|^k = \sum_{d\mid m}\sum_{E \in \mathcal{F}_2(d)} |C_E^{m\divi} - C^{m\divi}|^k \ll_k \sum_{d\mid m} \frac{|\mathcal{F}(d)|}{d^{2k} \varphi(m/d)^k}.
\end{equation}
Suppose $E \in \mathcal{F}_2(d)$. Then $\text{rad}(m_E) \mid d$. By Proposition \ref{sizeofmE}, $|\Delta'_E| \mid d$, and in particular, $|\Delta'_E| \leq d$. By \cite[Lemma 22]{MR2534114}, we have
\begin{equation}\label{averagebound2}
    |\mathcal{F}_2(d)| \leq \#\{E \in \mathcal{F}^{\text{Serre}} : |\Delta'_E| \leq d \} \ll B + \log B \cdot A \cdot \log^7 A \cdot d.
\end{equation}
Since $|\mathcal{F}| \asymp 4AB$, it follows from \eqref{averagebound1} and \eqref{averagebound2} that
\begin{equation}\label{F2sum}
    \frac{1}{|\mathcal{F}|} \sum_{E \in \mathcal{F}_2} |C_E^{m\divi} - C^{m\divi}|^k \ll_k \frac{1}{A}\sum_{d\mid m} \frac{1}{d^{2k}\varphi(m/d)^k} + \frac{\log B \cdot \log^7A}{B} \sum_{d\mid m} \frac{1}{d^{2k-1}\varphi(m/d)^k}.
\end{equation}
We define
$$\Sigma_1(m,k) \coloneqq \sum_{d\mid m} \frac{1}{d^{2k}\varphi(m/d)^k} \quad \text{ and } \quad \Sigma_2(m,k) \coloneqq \sum_{d\mid m} \frac{1}{d^{2k-1}\varphi(m/d)^k}.$$
Observe that, for any $d' \mid m$,
$$\frac{d'}{\varphi(d')} = \prod_{\ell \mid d'} \frac{\ell}{\ell-1} \leq \prod_{\ell \mid m} \frac{\ell}{\ell-1} = \frac{m}{\varphi(m)}.$$
Thus, for any $d \mid m$,
$$\frac{1}{\varphi(m/d)^k} = \frac{1}{(m/d)^k} \cdot \frac{(m/d)^k}{\varphi(m/d)^k} \leq \frac{d^k}{m^k} \cdot \frac{m^k}{\varphi(m)^k} = \frac{d^k}{\varphi(m)^k}.$$
Therefore,
\begin{equation}\label{sigma1sigma2}
    \Sigma_1(m,k) \leq \frac{\sigma_{-k}(m)}{\varphi(m)^k} \quad \text{ and } \quad \Sigma_2(m,k) \leq \frac{\sigma_{1-k}(m)}{\varphi(m)^k}.
\end{equation}
Combining \eqref{nonserresum}, \eqref{F2sum}, and \eqref{sigma1sigma2}, we obtain the first part of Theorem \ref{maintheorem2}.

Now we prove the second claim. It suffices to show that $\mathcal{F}_2$ is empty for each value of $m$ in the given list. By Theorem \ref{maintheorem}, if $E \in \mathcal{F}_2$, then $m_E$ must divide $m$. On the other hand, Proposition \ref{sizeofmE} and Lemma \ref{atleast6} imply that $m_E$ is even and $m_E \geq 6$. Thus, if $m$ is odd or $m \leq 4$, then $m_E \nmid m$, and hence $\mathcal{F}_2$ is empty.

It remains to consider $m=8$ and $m=16$. Let $E \in \mathcal{F}_2$. Since $m_E$ divides $m$, Proposition \ref{sizeofmE} and Lemma \ref{atleast6} imply that $m_E=8$. Hence $\Delta'_E=\pm 2$. Since $v_2(m) \in \{3,4\}$, we have $C^{m\divi}_E = C^{m\divi}$ by Theorem \ref{maintheorem}. This contradicts the assumption that $E \in \mathcal{F}_2$. Therefore, $\mathcal{F}_2$ is empty for $m = 8$ and $m =16$ as well. This completes the proof of Theorem \ref{maintheorem2}.

Finally, we consider the case in which $C_E^{m\divi}$ is farthest from $C^{m\divi}$ where $E$ is a Serre curve. For notational convenience, define
$$D(m) \coloneqq \prod_{\ell^\alpha \parallel m} \frac{\ell^{2\alpha-1}}{|\GL_2(\Z/\ell^\alpha \Z)|}.$$

Let $E$ be a Serre curve of adelic level $m_E$ and decompose $m = m_1m_2$ as in Theorem \ref{maintheorem}. Assuming that $C_E^{m\divi} \neq C^{m\divi}$, we have
$$|C_E^{m\divi} - C^{m\divi}| =  D(m_1) C^{m_2\divi}.$$
It is clear from the definition that $C^{m_2\divi}$ attains the largest value of $1$ at $m_2 = 1$. Hence, we restrict our attention to the case that $m_E \mid m \mid m_E^\infty$. By Lemma \ref{atleast6}, this forces that $m$ must be at least $6$. We first determine the upper bound of $D(m)$.
\begin{lemma}\label{largesterror}
    For any even $m \geq 6$, $D(m) \leq 1/48$.  
\end{lemma}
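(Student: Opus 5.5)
The plan is to exploit the multiplicativity of $D$ and bound each local factor separately. Using $|\GL_2(\Z/\ell^\alpha\Z)| = \ell^{4(\alpha-1)}(\ell^2-1)(\ell^2-\ell)$, each prime-power factor becomes
$$D(\ell^\alpha) = \frac{\ell^{2\alpha-1}}{\ell^{4\alpha-4}(\ell^2-1)(\ell^2-\ell)} = \frac{1}{\ell^{2\alpha}(\ell-1)^2(\ell+1)}.$$
This factor is strictly decreasing in $\alpha$ and in $\ell$, and it is always less than $1$. Since $D(m)=\prod_{\ell^\alpha\parallel m} D(\ell^\alpha)$, discarding or shrinking factors can only increase the product. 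So it suffices to find the largest possible value over the admissible shapes of $m$.

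The case split is on $v_2(m)$; the key constraint is that $m$ is even and $m \geq 6$.
\begin{itemize}
\item If $m$ has an odd prime divisor $\ell$, then $D(m) \leq D(2)\,D(\ell) \leq D(2)\,D(3)$. The values are $D(2) = \frac{1}{4\cdot 1\cdot 3} = \frac{1}{12}$ and $D(3) = \frac{1}{9\cdot 4\cdot 4} = \frac{1}{144}$, so the bound is $\frac{1}{1728}$, which is far below $1/48$.
\item If $m$ is a power of $2$ with $m\geq 6$, then $m=2^\alpha$ with $\alpha\geq 3$. Here $D(2^\alpha) = \frac{1}{3\cdot 4^\alpha} \leq \frac{1}{192}$.
\end{itemize}

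Both cases give bounds well under $1/48$, so the stated inequality holds, in fact with room to spare. The only care point is the routine simplification of $|\GL_2|$; there is no real obstacle.

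I suspect the intended constant $1/48$ comes from $D(4) = \frac{1}{3\cdot 16} = \frac{1}{48}$. In that case the sharp bound would cover all even $m\geq 4$, and the estimate for $m\geq 6$ follows a fortiori. I would therefore phrase the argument as $D(m) \leq \max\{D(4),\, D(2)D(3)\} = \frac{1}{48}$.

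For the odd-prime case, I would use monotonicity: $D(\ell^\alpha)$ decreases in both $\ell$ and $\alpha$, and every other factor is at most $1$.
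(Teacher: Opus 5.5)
Your strategy (multiplicativity of $D$, monotonicity of the local factor in $\ell$ and $\alpha$, and a case split according to whether $m$ has an odd prime factor) is essentially the paper's argument. However, your simplification of the local factor is wrong by a factor of $\ell^2$. Since $\ell^{4(\alpha-1)}(\ell^2-1)(\ell^2-\ell)=\ell^{4\alpha-3}(\ell-1)^2(\ell+1)$, the correct local factor is
\[
D(\ell^\alpha)=\frac{\ell^{2\alpha-1}}{\ell^{4\alpha-3}(\ell-1)^2(\ell+1)}=\frac{1}{\ell^{2\alpha-2}(\ell-1)^2(\ell+1)},
\]
not $1/(\ell^{2\alpha}(\ell-1)^2(\ell+1))$. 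You can check this directly:
\begin{itemize}
\item $D(2)=2/|\GL_2(\Z/2\Z)|=1/3$;
\item $D(3)=3/48=1/16$;
\item $D(4)=8/96=1/12$;
\item $D(8)=32/1536=1/48$;
\item in general $D(2^\alpha)=1/(3\cdot 4^{\alpha-1})$.
\end{itemize}
This also agrees with the paper's example, where $C_E^{6\divi}-C^{6\divi}=5/16-7/24=1/48=D(6)$. Consequently every number in your case analysis is off. The odd-prime case gives $D(m)\le D(2)D(3)=1/48$, not $1/1728$. The pure $2$-power case gives $D(2^\alpha)\le D(8)=1/48$, not $1/192$.

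The monotonicity facts you rely on still hold for the corrected formula: $D(\ell^\alpha)$ decreases in $\alpha$ and in $\ell$, and every factor is at most $1/3$. So your two-case argument does prove the lemma once the exponent is fixed. But it proves it with equality at $m=6$ and at $m=8$: the bound is sharp, not ``with room to spare.''

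Your closing remarks are false and should be dropped. The constant $1/48$ does not come from $D(4)$; in fact $D(4)=1/12>1/48$ and $D(2)=1/3$, so the inequality fails for $m=2$ and $m=4$. That is exactly why the hypothesis $m\ge 6$ is needed: it forces $v_2(m)\ge 3$ when $m$ is a power of $2$. The correct final bound is $D(m)\le \max\{D(8),\,D(2)D(3)\}=1/48$. Your proposed $\max\{D(4),\,D(2)D(3)\}$ actually equals $1/12$.
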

\begin{proof}
 From \eqref{localdifference}, one can easily observe that $D(p^\alpha) \leq D(p)$ for any prime power $p^\alpha$, and that $D(q) < D(p)$ for any two distinct primes $p < q$.

 Write $m = 2^\alpha m'$, where $m'$ is odd. Suppose first that $\alpha \leq 2$. Since $m \geq 6$, we have $m' \geq 3$. Hence,
    $$D(2^\alpha m') =D(2^\alpha) D(m') \leq D(2)D(3) = D(6) = \frac{1}{48}.$$
    Now suppose that $\alpha \geq 3$. Then
   $$D(2^\alpha m') = D(2^\alpha) D(m') \leq D(2^3) = \frac{1}{48}.$$
    This completes the proof.
\end{proof}
For a Serre curve $E$, the proof of Lemma \ref{largesterror} shows that the largest possible value of $|C_E^{m\divi}-C^{m\divi}|$ can occur only when $m=6$ or $m=8$. However, by Theorem \ref{maintheorem}, we have $C_E^{8\divi} = C^{8\divi}$. Thus, the largest difference occurs only when $m = 6$.

Let $E$ be a Serre curve with $\Delta'_{E} = -3$. By Proposition \ref{sizeofmE}, its adelic level is 6. By Theorem \ref{maintheorem}, we have
$$C_{E}^{6\divi} = C^{6\divi} + \prod_{\ell^\alpha \parallel 6} \frac{-\ell^{2\alpha-1}}{|\GL_2(\Z/\ell^\alpha\Z)|} = \frac{7}{24} + \frac{1}{48} = \frac{5}{16},$$
which almost doubles the naive expectation of $1/6$.

\begin{example}
    Consider $E:Y^2=X^3-48X+272$. Its LMFDB label is \texttt{135.a1}. This is a Serre curve of adelic level $6$. One can check that
    $$C^{6\divi} = \frac{7}{24} = 0.291667\ldots \qquad \text{and} \qquad C^{6\divi}_E = \frac{5}{16} = 0.3125.$$
    Thus, $C_E^{6\divi}$ is about $7.14\%$ larger than the average density.

    Our Magma computation found that
    $$\frac{\pi_E^{6\divi}(10^8)}{\#\{p\leq 10^8 : p\nmid N_E\}} = \frac{1800451}{5761452} = 0.312500 \ldots,$$
    which is consistent with $C_E^{6\divi}$.
\end{example}

Now, we check the case where $C_E^{m\divi}$ is furthest below $C^{m\divi}$. By Theorem \ref{maintheorem}, this can occur when $m_2 = 1$ and $m_1$ has an odd number of distinct prime factors, one of which must be $2$. Moreover, Lemma \ref{largesterror} shows that the discrepancy is largest when $m_1$ is as small as possible. Thus, the only remaining candidates are $m_1 = 2^5$, since the cases $m_1 = 2^3$ and $m_1=2^4$ have already been ruled out, and $m_1= 2\cdot 3 \cdot 5$.

\begin{example}
    Consider $E:Y^2=X^3-183X-993$. Its LMFDB label is \texttt{1080.a1}. This is a Serre curve of adelic level $30$. One can check that
    $$C^{30\divi} = \frac{161}{2304} = 0.069878\ldots \qquad \text{and} \qquad C^{30\divi}_E = \frac{107}{1536} = 0.069661\ldots.$$
    Thus, $C_E^{30\divi}$ is about $0.31\%$ smaller than the average density.

    Our Magma computation found that
    $$\frac{\pi_E^{30\divi}(10^8)}{\#\{p\leq 10^8 : p\nmid N_E\}} = \frac{401612}{5761452} = 0.069707 \ldots,$$
    which is consistent with the theoretical density $C_E^{30\divi}$.
\end{example}

\begin{example}
    Consider $E:Y^2=X^3+125X-1250$. Its LMFDB label is \texttt{200.a1}. This is a Serre curve of adelic level $8$. Taking $m=32$, one can check that
    $$C^{32\divi} = \frac{47}{768} = 0.061198\ldots \qquad \text{and} \qquad C^{32\divi}_E = \frac{23}{384} = 0.059896\ldots.$$
    Thus, $C_E^{32\divi}$ is about $2.13\%$ smaller than the average density.

    Our Magma computation found that
    $$\frac{\pi_E^{32\divi}(10^8)}{\#\{p\leq 10^8 : p\nmid N_E\}} = \frac{344797}{5761453} = 0.059845 \ldots,$$
    which is consistent with the theoretical density $C_E^{32\divi}$.
\end{example}
As one can see, the discrepancy is larger in the case $m_E=8$ and $m=32$.

\bibliographystyle{amsplain}
\bibliography{References}

\end{document}